\newcommand{\ttup}[1]{\textup{(}#1\textup{)}}
\newcommand{\stkout}[1]{\ifmmode\text{\sout{\ensuremath{#1}}}\else\sout{#1}\fi}
\newtheorem{lemma}{Lemma}[section]
\newtheorem{theorem}{Theorem}[section]
\newtheorem{corollary}{Corollary}[section]
\theoremstyle{definition}
\newtheorem{definition}{Definition}[section]
\newtheorem{assumption}{Assumption}[section]
\newtheorem{example}{Example}[section]
\theoremstyle{remark}
\newtheorem{remark}{Remark}[section]
\numberwithin{theorem}{section}
\numberwithin{equation}{section}
\crefname{section}{Section}{Sections}
\crefname{subsection}{Section}{Sections}
\crefname{condition}{Condition}{Conditions}
\crefname{hypothesis}{Hypothesis}{Conditions}
\crefname{assumption}{Assumption}{Assumptions}
\crefname{lemma}{Lemma}{Lemmas}
\crefname{fact}{Fact}{Facts}
\Crefname{figure}{Figure}{Figures}
\newcommand{\vertiii}[1]{{\left\vert\kern-0.25ex\left\vert\kern-0.25ex\left\vert #1 
    \right\vert\kern-0.25ex\right\vert\kern-0.25ex\right\vert}}
\newcommand{\cA}{{\mathcal{A}}}  
\newcommand{\sA}{{\mathscr{A}}}  
\newcommand{\sB}{{\mathscr{B}}}  
\newcommand{\cC}{{C}}   
\newcommand{\cI}{{\mathcal{I}}}  
\newcommand{\sJ}{{\mathscr{J}}}  
\newcommand{\sK}{{\mathscr{K}}}  %
\newcommand{\Lp}{{L}}            
\newcommand{\Lpl}{L_{\text{loc}}}            
\newcommand{\RR}{\mathds{R}}
\newcommand{\NN}{\mathds{N}}
\newcommand{\Rd}{{\mathds{R}^{d}}}
\DeclareMathOperator{\Exp}{\mathbb{E}}
\DeclareMathOperator{\Prob}{\mathbb{P}}
\newcommand{\D}{\mathrm{d}}
\newcommand{\E}{\mathrm{e}}
\newcommand{\Ind}{\mathds{1}}   
\newcommand{\Sob}{{\mathscr W}}    
\newcommand{\Sobl}{{\mathscr W}_{\mathrm{loc}}} 
\newcommand{\df}{\coloneqq}
\DeclareMathOperator*{\trace}{Tr}
\DeclareMathOperator*{\supp}{support}
\newcommand{\grad}{\nabla}
\newcommand{\abs}[1]{\lvert#1\rvert}
\newcommand{\norm}[1]{\lVert#1\rVert}
\newcommand{\babs}[1]{\bigl\lvert#1\bigr\rvert}
\definecolor{dmagenta}{rgb}{.4,.1,.5}
\definecolor{dblue}{rgb}{.0,.0,.5}
\definecolor{mblue}{rgb}{.0,.0,.7}
\definecolor{ddblue}{rgb}{.0,.0,.4}
\definecolor{dred}{rgb}{.7,.0,.0}
\definecolor{dgreen}{rgb}{.0,.5,.0}
\definecolor{Eeom}{rgb}{.0,.0,.5}
\newcommand{\ttl}{\Large Generalized principal eigenvalues on $\Rd$
of second order\\[5pt] elliptic operators with
rough nonlocal kernels }
\begin{document}
\title[Generalized eigenvalues with rough nonlocal kernels]
{\ttl}

\author[Ari Arapostathis]{Ari Arapostathis$^{\dag}$}
\address{$^{\dag}$Department of ECE,
The University of Texas at Austin,
EER~7.824, Austin, TX~~78712}
\email{ari@utexas.edu}

\author[Anup Biswas]{Anup Biswas$^\ddag$}
\address{$^\ddag$Department of Mathematics,
Indian Institute of Science Education and Research,
Dr.\ Homi Bhabha Road, Pune 411008, India}
\email{anup@iiserpune.ac.in}

\author[Prasun Roychowdhury]{Prasun Roychowdhury$^\ddag$}
\email{prasun.roychowdhury@students.iiserpune.ac.in}

\begin{abstract}
We study the generalized eigenvalue problem on the whole space for
a class of integro-differential elliptic operators.  The nonlocal operator
is over a finite measure, but this has no particular structure.
Some of our results even hold for
singular kernels.
The first part of the paper presents results concerning the existence of
a principal eigenfunction.
Then we present various necessary and/or sufficient conditions
for the maximum principle to hold, and use these to
characterize the simplicity of the principal eigenvalue.
\end{abstract}
\keywords{Principal eigenvalue, nonlocal operators, maximum principle, simple eigenvalue, Harnack inequality}

\subjclass[2000]{Primary 35P30, 35B50}

\maketitle

\section{Introduction}

The analysis of eigenvalues and eigenfunctions is a central topic in the study of 
operator theory, partial differential equations and probability. The importance
of eigenvalue theory is evident from its wide range of applications including maximum
principles, bifurcation theory, stability analysis of nonlinear pde, large deviation
principle, risk-sensitive control etc. 
For important early work on the generalized principal eigenvalue of elliptic operators we refer the reader to the works of
Protter-Weinberger \cite{PW66}, Donsker-Varadhan \cite{DV76},
 Nussbaum \cite{N84}, Nussbaum-Pinchover \cite{NP92}.
In their seminal work 
Berestycki--Nirenbarg--Varadhan \cite{BNV-94} study the
properties of generalized Dirichlet principal eigenvalue of uniformly
elliptic operators in bounded domains and
show that the validity of maximum principle in bounded domains is equivalent to
the positivity of the  principal  eigenvalue.
This work has been extended for 
different kinds operators both in bounded and unbounded domains. See for instance,
Armstrong \cite{Arm-09}, Quaas-Sirakov \cite{QS08}, Ishii-Yoshimura \cite{IY-06},
Juutinen \cite{Ju-07} for bounded domains and 
Berestycki-Hamel-Rossi \cite{BHR-07}, Berestycki-Rossi \cite{Berestycki-15},
Biswas-Roychowdhury \cite{BR-20}, Nyguen-Vo \cite{NV-19} for unbounded domains.
In this article we are interested in the eigenvalue
theory of nonlocal operators
taking the form 
\begin{equation*}
\cI f(x)\,=\, \trace\bigl(a(x) \grad^2 f\bigr) + b(x)\cdot\grad f(x)
+ c(x) f(x) + I[f, x]\,,
\end{equation*}
where 
\begin{equation*}
I[f, x]\,=\,\int_{\Rd} \bigl(f(x+z)-f(x)\bigr)\,\nu(x,\D{z})\,,
\end{equation*}
and $\nu(x, \cdot)$ is a finite, non-negative Borel measure on $\Rd$.
It is easily seen that $\cI$ belongs to a large family of
integro-differential operators.
Very recently, generalized principal eigenvalues of integro-differential
operators have been studied in bounded domains, see for instance, 
Arapostathis-Biswas \cite{AB19}, Quaas-Salort-Xia \cite{QSX20},
Biswas \cite{Biswas-20}, Biswas-L\H{o}rinczi 
\cite{BL-19}, Pinsky \cite{Pinsky12,Pinsky09}, Dipierro-Proietti Lippi-Valdinoci \cite{DPV21} and references therein.
 To the best of our knowledge, there are only few works in nonlocal setting
dealing with the eigenvalue problems in unbounded domains. 
Berestycki-Roquejoffre-Rossi \cite{BRR-11} discuss eigenvalue problems 
for the fractional Laplacian in $\Rd$ for a periodic patch model. There are 
quite a few 
works on generalized eigenvalue problems in
unbounded domains with dispersal nonlocal kernels, see for example,
Berestycki-Coville-Vo \cite{BCV-16}, Coville-Hamel \cite{CH-20}, Coville \cite{Cov10}, Rawal-Shen \cite{RS12}, Shen-Xie \cite{SX15} and references therein.

The very first question that one would ask while studying the eigenvalue problem in
$\Rd$ is the existence of a principal eigenfunction.
In the case of nondegenerate local
elliptic pde (i.e., when $\nu=0$) this is obtained by passing to the limit in the
Dirichlet eigenvalue problems over an increasing sequence of balls covering $\Rd$,
with the help of Harnack inequality (cf. \cite{Berestycki-15}).
In the case of integro-differential equations, one also needs to
control the tail behaviour of the limiting eigenfunction to justify the passage
to the 
limit in the nonlocal integration (cf. \cite{AB19, BCV-16, CH-20}). These two key factors 
(i.e., Harnack inequality and tail behaviour of the eigenfunction) make the nonlocal eigenvalue problem in unbounded domains difficult (see also the discussions in
\cite[p~2711]{BCV-16} in the context of nonlocal dispersal kernels). For the particular operator 
$\cI$ above
it is known that the Harnack inequality does not hold in general 
\cite[Example~1.1]{ACGZ}. Therefore the existence of principal eigenfunctions becomes
non-obvious. In \cref{T1.1,T1.2} we identify a large family of operators for which
a
Harnack type inequality holds, and therefore, 
existence of a principal eigenfunction can
be proved \cref{T1.3}. We also show that if $\nu(x, \Rd)\to 0$ at infinity then
under some additional mild hypotheses we can assert the existence of
a principal eigenfunction.
In \cref{S2} we discuss the maximum principle for the operator $\cI$ in $\Rd$ and its
relation with generalized eigenvalues. These results are in the spirit of 
\cite{Berestycki-15}. Let us also mention \cref{T3.8}, where we prove an equivalence 
relation between minimal growth at infinity (\cref{GS}) and 
the strict monotonicity property
of principal eigenvalue on the right (\cref{D2.3}). In \cref{Appen}
we gather some known results
concerning Dirichlet eigenvalue problems in bounded domains.


\section{Existence of principal eigenfunction}
We deal with a second order, linear nonlocal operator given by
\begin{equation}\label{E1.1}
\cI f(x)\,=\, \trace\bigl(a(x) \grad^2 f\bigr) + b(x)\cdot\grad f(x)
+ c(x) f(x) + I[f, x]\,,
\end{equation}
where 
\begin{equation*}
I[f, x]\,=\,\int_{\Rd} \bigl(f(x+z)-f(x)\bigr)\,\nu(x,\D{z})\,.
\end{equation*}
Let us also denote by
\begin{equation}\label{E1.2}
\sA f(x) = \trace\bigl(a(x) \grad^2 f\bigr) + b(x)\cdot\grad f(x)
+ c(x) f(x) - \nu(x) f(x),
\end{equation}
where $\nu(x)=\nu(x,\Rd)$. It should be observed that the value $\nu(x,\{0\})$
has no effect on the equation,
and thus without any loss of generality we assume that $\nu(x,\{0\})=0$.
Throughout this article we impose the following assumptions, unless we state
otherwise.

\begin{assumption}\label{A1.1}
The coefficients satisfy the following conditions.
\begin{itemize}
\item[(i)] $b:\Rd\to\Rd$ and $c:\Rd\to\RR$ are locally bounded.
\item[(ii)] $a:\Rd\to\RR^{d\times d}$ is continuous and locally uniformly elliptic.
\item[(iii)] $\nu$ is a nonnegative, Borel measure satisfying the following:
\begin{itemize}
\item
The map $x\mapsto\nu(x,\Rd)\df\nu(x)$ locally bounded.
\item
$\nu$ has \emph{locally compact support} in the sense that
for every
compact set $K\subset\Rd$ there exists a compact set $K_1\subset\Rd$ so that
$\supp\bigl(\nu(x,\cdot\,)\bigr)\subset K_1$ for all $x\in K$.
\end{itemize} 
\end{itemize}
\end{assumption}
We are interested in the notion of generalized principal eigenvalue of $\cI$ in $\Rd$. 
Generalizing Berestycki-Rossi \cite{Berestycki-15}, which is actually in the 
spirit of \cite{BNV-94,NP92}, we define the principal
eigenvalue as follows
\begin{equation}\label{E1.3}
\lambda_1(\cI)\,=\,\sup\,\bigl\{\lambda\in\RR\,\colon\, \exists\; \text{positive~}
\phi\in\Sobl^{2,d}(\Rd) \text{~satisfying~} \cI \phi+\lambda \phi\le 0 \text{~in~}
\Rd \bigr\}.
\end{equation}
This definition can also been as a generalization of the Dirichlet principal
eigenvalue in bounded domains.
Let $D$ be a smooth bounded domain.
Then the Dirichlet principal eigenvalue of $\cI$ in $D$ is defined as 
\begin{equation}\label{E1.4}
\begin{aligned}
\lambda(\cI,D) &\,\df\,\sup\,\Bigl\{\lambda\in\RR\,\colon\, \exists\,
\phi\in\cC_+(\Rd)\cap\Sobl^{2,d}(D) \text{~satisfying~} \cI \phi+\lambda \phi\le 0
\text{~in~} D\\
&\mspace{500mu} \text{~and~} \phi>0 \text{~in~} D\Bigr\}\,,
\end{aligned}
\end{equation}
where $\cC_+(\Rd)$ denotes the subspace of $\cC(\Rd)$ consisting
of nonnegative functions.
The existence of principal eigenfunction in $D$ follows from \cref{TA1} in \cref{Appen}.
Also, note that $\lambda(\cI,D)$ is
decreasing with respect to increasing domains i.e. for $D_1\subset D_2$ we have
$\lambda(\cI,D_1)\ge \lambda(\cI,D_2)$.
It is also evident from this definition that $\lambda(\cI,D)\ge \lambda_1(\cI)$ for all
$D\subset \Rd$.

Now the following questions emerge in a natural manner.
\begin{itemize}
\item[\hypertarget{Q1}{\bf{Q1}.}]
Is $\lim_{n\to\infty}\lambda(\cI,B_n)$ equal to $\lambda_1(\cI)$?
\item[\hypertarget{Q2}{\bf{Q2}.}]
Does there exist a principal eigenfunction attaining the value $\lambda_1(\cI)$?
\end{itemize}
The above questions are quite related to each other. Recall that for 
nondegenerate second order elliptic operators (i.e., $\nu=0$)
the answers to the above questions are affirmative.
In fact, the equality in the question Q1 implies existence of a principal eigenfunction.
The main machinery in obtaining these results is the Harnack inequality
(cf. \cite{Berestycki-15}).
It is known from \cite[Example~1.1]{ACGZ} that the Harnack inequality does not hold
true for $\cI$, in general.
Also, a recent work of Mou \cite{M19} confirms only a weak-Harnack inequality for $\cI$.
Very recently, Harnack inequality has been studied for various integro-differential operators. For instance,
Foondun \cite{F09} obtains Harnack inequality for the bounded harmonic functions of second order integro-differential operators, 
Bass-Levin \cite{BL02}, Caffarelli-Silvestre \cite{CS09} consider Harnack estimate for the fractional Laplacian type
operators,
 Di Castro et. al. \cite{DKP14} establish similar estimate for the fractional $p$-Laplacian whereas
Coville \cite{Cov12}, Coville-Hamel \cite{CH-20} derive Harnack estimate for dispersal type nonlocal kernels.
One of the main contributions of this article is to
produce a large family of kernel $\nu$ for which a suitable Harnack type inequality holds,
and thus resolve Q1 and Q2 for this family of kernels.

In \hyperlink{H1}{\rm{(H1)}} and \hyperlink{H2}{\rm{(H2)}} which follow, we
describe two classes of kernels for which we can obtain a Harnack type estimate.

\begin{itemize}
\item[\hypertarget{H1}{\rm{(H1)}}]
The measure $\nu$ takes the form
$\nu(x,\D{y})= g(x, y) \D{y}$ for some measurable
function $g\colon\Rd\times\Rd\to[0, \infty)$.
There exists a nondecreasing function $\gamma\colon(0,\infty)\to(0,\infty)$
and positive functions $M_1, M_2$ defined on $(0,\infty)$, such that
for all $R>0$ the following hold:
\begin{subequations}
\begin{align}
g(x,y) &\,=\, 0\qquad \forall (x,y)\in B_R\times\Bar{B}^c_{\gamma(R)}\,,
\label{EH1-a}\\
g(x,y) &\,\le\, M_1(R) \qquad \forall (x,y)\in B_R\times\Bar{B}_{\gamma(R)}\,,
\label{EH1-b}\\
\int_{B_{\gamma(R)}} g(y, x-y)\, \D{y} &\,\ge\, M_2(R)
\qquad \forall x\in B_R\,.
\label{EH1-c}
\end{align}
\end{subequations}
\item[\hypertarget{H2}{\rm{(H2)}}]
For $d=1$, $\nu$ is locally compactly supported, that is, for some function $\gamma$ as in \hyperlink{H1}{\rm{(H1)}}
we have  $\supp\bigl(\nu(x,\cdot\,)\bigr)\subset B_{\gamma(R)}$ for all $x\in B_R$.
\end{itemize}

Here, and in the rest of the paper, we use the notation $B_r$ to denote the ball of radius $r$ centered at $0$.

%
%

\begin{remark}
Note that \cref{EH1-a} is equivalent to the statement that
$\nu$ has locally compact support (see \cref{A1.1}\,(iii)),
and \cref{EH1-b} implies that $\nu$ is locally bounded.
Hypothesis \cref{EH1-c} is a local positivity assumption.
It is satisfied, for example, if $g(x,y) \ge c \Ind_{B_\epsilon}(x-y)$ for some
positive constants $c$ and $\epsilon$ depending on $R$.
An example of a measure $\nu$ that does not satisfy \cref{EH1-c} is
given by
\begin{equation*}
g(x,y) \,=\, \Ind_{B_{2\abs{x}}\setminus B_{\abs{x}}}(x+y)\,.
\end{equation*}
This is because, if we evaluate \cref{EH1-c} at $x=0$, we obtain
\begin{equation*}
\int_{B_{\gamma(R)}} g(y, x-y)\, \D{y}\,=\,
\int_{B_{\gamma(R)}} \Ind_{B_{2\abs{y}}\setminus B_{\abs{y}}}(0)\, \D{y}
\,=\, 0\,.
\end{equation*}
On the other hand, if we let 
$$ g(x, y) = \Ind_{B_{\kappa\abs{x}}}(x+y),\quad \text{for some}\, \kappa>0,$$ 
then setting $\gamma(R)=(1+\kappa)R$, it can be easily checked that \hyperlink{H1}{\rm{(H1)}} holds.
\end{remark}

\begin{example}
If $\nu$ is translation invariant and has a density, that is,
$\nu(x,\D{y})=g(y)\D{y}$, then \cref{EH1-c} is always satisfied
unless $\nu\equiv0$.
Therefore, translation invariant measures $\nu$ which have a bounded
density with compact support satisfy \hyperlink{H1}{\rm{(H1)}}.
Then, in view of \cref{T1.3} which appears later in this section,
questions \hyperlink{Q1}{\rm{Q1}} and \hyperlink{Q2}{\rm{Q2}}
have an affirmative answer for this class of operators.
The reader should note that an even larger class of measures
satisfying \cref{EH1-c} are those that can be minorized
by a translation invariant measure with density.
For instance, a typical such example of measure $\nu$ satisfying \hyperlink{H1}{\rm{(H1)}} would be $\nu(x, \D{y})=\Ind_{B_{\xi(x)}(0)}(y) k(x, y) g(y)\D{y}$
where the function $g:\Rd\times\Rd\to[0, \infty)$ and 
$\xi:\Rd\to (0, \infty)$ are locally bounded, $k$ is bounded from above and 
below by positive constants on every compact sets,
 and for some $\delta>0$ we have
$$\int_{B_\delta} g(y)\D{y}>0.$$

\end{example}

Next we prove a Harnack type estimate.

\begin{theorem}\label{T1.1}
Let $a$ be locally Lipschitz,
and \hyperlink{H1}{\rm{(H1)}} or \hyperlink{H2}{\rm{(H2)}} hold.
Set $\Breve\gamma(R)\df\gamma\bigl(2R+\gamma(2R)\bigr)\vee 2R$.
Then, for every $R>0$ there exists a positive constant $C(R)$ such
that every nonnegative solution $u$ of $\cI u=0$ in
$B_{2\Breve\gamma(R)}$
satisfies
\begin{equation}\label{ET1.1A}
\sup_{B_R}\, u \,\le\, C(R) u(0)\,.
\end{equation}
\end{theorem}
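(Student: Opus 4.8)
The plan is to base the proof on the splitting $\cI=\sA+K$, where $Ku(x)\df\int_{\Rd}u(x+z)\,\nu(x,\D z)$, so that $\cI u=0$ becomes $\sA u=-Ku$ in $B_{2\Breve\gamma(R)}$. Since $u\ge0$ and $\nu\ge0$ we have $Ku\ge0$, hence $\sA u\le0$, so $u$ is a nonnegative supersolution of the \emph{local}, uniformly elliptic operator $\sA$. Because $a$ is locally Lipschitz, $\trace\bigl(a\grad^2\cdot\bigr)$ is, modulo a bounded first‑order term, in divergence form; thus $\sA$ is a divergence‑form uniformly elliptic operator with locally bounded lower‑order coefficients, and after replacing it by $\sA-(c-\nu)_+$ — which keeps $u$ a supersolution, since $u\ge0$ — we may take its zeroth‑order coefficient nonpositive. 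The first step is then purely local: apply the interior weak Harnack inequality (which for such divergence‑form operators holds with exponent one) on small balls and chain it over a finite cover of the closed ball $\Bar{B}_{r_0}$, $r_0\df 2R+\gamma(2R)$, linking each point to the origin — the chaining being legitimate because $\xi\mapsto\inf_{B_\varrho(\xi)}u$ is comparable at neighbouring points in both directions by two applications of the estimate. Using $\Bar{B}_{r_0}\Subset B_{2\Breve\gamma(R)}$ (valid since $\Breve\gamma(R)=\gamma(r_0)\ge r_0$), this yields an a priori bound
\[
\norm{u}_{L^1(B_{r_0})}\,\le\,C(R)\,u(0)\,.
\]

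For the supremum bound, the point is to dominate the nonlocal term by this same quantity. Fix $x\in B_{2R}$: by \cref{EH1-a}, $\nu(x,\cdot)$ is supported in $\Bar{B}_{\gamma(2R)}$, so $Ku(x)$ depends only on $u$ restricted to $\Bar{B}_{r_0}$, and by \cref{EH1-b} the density $g$ is bounded on the relevant set; hence $Ku$, restricted to $B_{2R}$, is bounded in $L^\infty(B_{2R})$ — and therefore in $L^d(B_{2R})$ — by $C(R)\,\norm{u}_{L^1(\Bar{B}_{r_0})}$, which by the first step is $\le C(R)\,u(0)$. Thus $u$ solves the \emph{local} equation $\sA u=f$ in $B_{2R}$ with $\norm{f}_{L^d(B_{2R})}\le C(R)\,u(0)$, and the classical interior Harnack inequality for $\sA$ (valid on a fixed ball, with constant depending on $R$, the ellipticity and the coefficient bounds; the zeroth‑order sign handled as above) gives $\sup_{B_R}u\le C\bigl(\inf_{B_R}u+\norm{f}_{L^d(B_{2R})}\bigr)\le C(R)\,u(0)$, which is \cref{ET1.1A}. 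In the one‑dimensional case \hyperlink{H2}{\rm{(H2)}}, where $\nu$ may be singular and this $L^1$ control of $Ku$ is unavailable, the nonlocal term is bypassed altogether: $u$ is still a nonnegative supersolution of the one‑dimensional local operator $\sA$ on the interval $B_{2\Breve\gamma(R)}$, and comparing $u$ on $B_R$ with explicit solutions of $\sA w=0$ that vanish at an endpoint of that interval (equivalently, via the elementary one‑dimensional Harnack estimate for supersolutions) yields $\sup_{B_R}u\le C(R)\,u(0)$ directly.

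The heart of the matter — and the reason a Harnack inequality fails for $\cI$ in general, cf.\ \cite[Example~1.1]{ACGZ} — is precisely the domination of $Ku$, on the ball where an a priori bound is sought, by a norm of $u$ that can in turn be controlled by $u(0)$: bounding $Ku$ merely by $\sup u$ over a slightly larger ball is useless, because the enlarged ball keeps growing and the estimate cannot be iterated within the fixed domain of the equation. Conditions \hyperlink{H1}{\rm{(H1)}} supply exactly what is needed — \cref{EH1-a} (local compact support) makes the reach of $K$ finite and contained in the domain, which is why $\Breve\gamma(R)$ involves $\gamma$ twice (once for the reach of $K$ from $B_{2R}$, once for the room needed to run the weak‑Harnack chain up to $\Bar{B}_{r_0}$), and \cref{EH1-b} (local boundedness of the density) is what lets one replace $\sup u$ by an $L^1$ norm, which the weak Harnack inequality — available with exponent one because $a$ is Lipschitz — ties back to $u(0)$. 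The positivity assumption \cref{EH1-c} plays no role here; it enters only in the later existence results. The remaining ingredients, the interior estimates for the local operator $\sA$, are classical.
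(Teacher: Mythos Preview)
Your argument is correct and genuinely different from the paper's. Both proofs begin with the same splitting $\sA u=-\sJ$ and the same final step (the non-divergence Harnack estimate $\sup_{B_R}u\le C\bigl(u(0)+\norm{\sJ}_{L^d(B_{2R})}\bigr)$), but the control of $\sJ$ is obtained by entirely different mechanisms. The paper proceeds probabilistically: applying It\^o's formula to the diffusion with generator $\sA_\circ$ and invoking a lower bound on the killed transition density (via parabolic Harnack) yields $\int_{B_{\Breve\gamma}}\sJ\le C\,u(0)$; under \hyperlink{H1}{(H1)} the positivity hypothesis \cref{EH1-c} is then used, through Fubini, to convert this into $\int_{B_{r_0}}u\le C\,u(0)$, after which \cref{EH1-a,EH1-b} give the pointwise bound on $\sJ$; under \hyperlink{H2}{(H2)} the $L^1$ bound on $\sJ$ suffices directly because $d=1$. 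You instead exploit the Lipschitz hypothesis on $a$ to put $\sA$ in divergence form and apply the elliptic weak Harnack inequality with exponent $p=1$ straight to the supersolution $u$, obtaining $\norm{u}_{L^1(B_{r_0})}\le C\,u(0)$ without any stochastic calculus, parabolic theory, or appeal to \cref{EH1-c}. For \hyperlink{H2}{(H2)} you use instead the elementary one-dimensional fact that nonnegative supersolutions of a local second-order ODE operator satisfy a full interior Harnack inequality; this is correct (reduce, after the ground-state transform $v=u/\psi$ with $\psi>0$ a local solution, to $(pv')'\le0$ and use nonnegativity near the endpoints), though you might spell it out.

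Your route is more elementary and, notably, shows that \cref{EH1-c} is superfluous for this theorem. One small correction: you write that \cref{EH1-c} ``enters only in the later existence results''---in fact the paper uses it precisely here, in the proof of \cref{T1.1}, and nowhere else; so your observation actually eliminates the hypothesis from the paper altogether. What the paper's approach buys is a bound on $\norm{\sJ}_{L^1}$ that is dimension-free and does not require $\nu$ to have a density, which is why their treatment of \hyperlink{H2}{(H2)} needs no separate one-dimensional argument.
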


\begin{proof}
Let
\begin{equation*}\sJ(x) \,\df\, \int_{\Rd} u(x+z) \nu(x,\D{z})\,,\end{equation*}
and recall the definition of $\sA$ in \cref{E1.2}.
Thus 
\begin{equation}\label{ET1.1B}
\sA u(x) \,=\, -\sJ(x)\quad \text{in~} B_{2\Breve\gamma(R)}
\end{equation}
by the hypothesis of the theorem.
Applying \cite[Theorem~9.20 and 9.22]{GilTru} to \cref{ET1.1B}, we obtain
\begin{equation}\label{ET1.1C}
\sup_{B_R}\, u \,\le\, C_0\biggl(\inf_{B_{R}}\, u+ \norm{\sJ}_{L^d(B_{2R})}\biggr)
\,\le\, 
C_0\left(u(0)+ \norm{\sJ}_{L^d(B_{2R})}\right)\,,
\end{equation}
for some positive constant $C_0$ depending only on $R$.
We continue by estimating $\norm{\sJ}_{L^d(B_{2R})}$.
Let $\{X_t\}_{t\ge0}$ be the diffusion process with generator
\begin{equation*}
\sA_\circ f \,=\, \trace\bigl(a(x) \grad^2 f\bigr) + b(x)\cdot\grad f(x)\,,
\end{equation*}
and $p^{B_{2\Breve\gamma(R)}}(t,x,y)$ denote the killed transition kernel
in $B_{2\Breve\gamma(R)}$, that is,
\begin{equation*}
\Prob_x(X_t\in A\, \colon t<\Hat\tau(R))
\,=\,\int_{A} p^{B_{2\Breve\gamma(R)}}(t,x,y)\, \D{y}\quad \text{for all Borel~} 
A\subset B_{2\Breve\gamma(R)}\,,
\end{equation*}
where $\Hat\tau(R)=\uptau_{B_{2\Breve\gamma(R)}}$ denotes the first exit time of $X$ from
$B_{2\Breve\gamma(R)}$.
Let
\begin{equation*}
\kappa_1(R)\,\df\, \sup_{x\in B_{2\Breve\gamma(R)}}\, \babs{c(x)-\nu(x)}\,.
\end{equation*}
Applying It\^{o}'s formula to \cref{ET1.1B}  we obtain
\begin{equation}\label{ET1.1D}
\begin{aligned}
u(0) &\,=\, \Exp_0\Bigl[u\bigl(X_{2\wedge\Hat\tau(R)}\bigr)\Bigr]
+ \Exp_0\left[\int_0^{2\wedge\Hat\tau(R)}
\E^{\int_0^t (c(X_s)-\nu(X_s))\D{s}} \sJ(X_t)\, \D{t}\right]\\
&\,\ge\, \E^{-2\kappa_1(R)} \Exp_0\left[\int_0^{2}
\Ind_{\{t<\Hat\tau(R)\}} \sJ(X_t)\, \D{t}\right].
\end{aligned}
\end{equation}
Since $p^{B_{2\Breve\gamma(R)}}(t,x,y)$ solves a parabolic equation,
it is known from the parabolic Harnack's inequality (see \cites{R05,Z96}) that
there exists a positive constant $\kappa_2(R)$ such that
\begin{equation*}
\inf_{t\in[1, 2]}\,\inf_{y\in B_{\Breve\gamma(R)}}\,p^{B_{2\Breve\gamma(R)}}(t,0,y)
\,\ge\, \kappa_2(R)\,.
\end{equation*}
Therefore, we have
\begin{equation}\label{ET1.1E}
\begin{aligned}
\Exp_0\left[\int_0^{2}\Ind_{\{t<\Hat\tau(R)\}}\,
 \sJ(X_t)\, \D{t}\right]
&\,\ge\,\int_1^2 \D{t} \int_{B_{2\Breve\gamma(R)}} \sJ(z)
p^{B_{2\Breve\gamma(R)}}(t, 0, z)\,\D{z}\\
&\,\ge\, \kappa_2(R) \int_{B_{\Breve\gamma(R)}} \sJ(z)\,\D{z}\,.
\end{aligned}
\end{equation}
Combining \cref{ET1.1D,ET1.1E}, we obtain
\begin{equation}\label{ET1.1F}
\int_{B_{\Breve\gamma(R)}} \sJ(z)\,\D{z}
\,\le\, \Bigl(\kappa_2(R)\E^{-2\kappa_1(R)}\Bigr)^{-1} u(0)\,.
\end{equation}

First assume 
\hyperlink{H1}{\rm{(H1)}}.
Then, using \cref{ET1.1F}, we have
\begin{equation}\label{ET1.1G}
\begin{aligned}
\Bigl(\kappa_2(R)\E^{-2\kappa_1(R)}\Bigr)^{-1} u(0)
&\,\ge\, \int_{B_{\Breve\gamma(R)}} \sJ(z)\,\D{z} \\
&\,=\, \int_{B_{\Breve\gamma(R)}}\biggl(\int_{\Rd} u(y) g(z, y-z)\,\D{y}\biggr)\, \D{z}\\
&\,\ge\,\int_{B_{\Breve\gamma(R)}}\biggl(\int_{B_{2R+\gamma(2R)}}u(y)
g(z,y-z)\,\D{y}\biggr)\,
\D{z}\\
&\,\ge\, M_2(2R+\gamma(2R)) \int_{B_{2R+\gamma(2R)}} u(y)\,\D{y}\,,
\end{aligned}
\end{equation}
where in the third inequality we use Fubini's theorem
and \cref{EH1-c} for the last inequality.
Thus, by \cref{EH1-a,EH1-b,ET1.1G}, we obtain for $x\in B_{2R}$ that
\begin{equation}\label{ET1.1H}
\begin{aligned}
\sJ(x) &\,\le\, M_1(R) \int_{x+B_{\gamma(2R)}} u(y)\,\D{y}\\
&\,\le\, M_1(R) \int_{B_{2R + \gamma(2R)}} u(y)\,\D{y}\\
&\,\le\, \frac{M_1(R)}{M_2(2R+\gamma(2R))\kappa_2(R)}\,\E^{2\kappa_1(R)}\,
u(0)\qquad \forall x\in B_{2R}\,.
\end{aligned}
\end{equation}
It is clear then that \cref{ET1.1A} follows from \cref{ET1.1C,ET1.1H}.

Under \hyperlink{H2}{\rm{(H2)}}, since $d=1$, \cref{ET1.1A} follows
from \cref{ET1.1F}.
This completes the proof.
\end{proof}

\begin{remark}
Under \hyperlink{H1}{\rm{(H1)}} or \hyperlink{H2}{\rm{(H2)}}
a slightly more general estimate holds.
Suppose that a nonnegative $u$ satisfies $\cI u =- f$ in
$B_{2\Tilde\gamma(R)}$,
for some nonnegative $f\in\Lpl^d(\Rd)$.
Then there exists a positive constant $C_R$ such that
\begin{equation*}
\sup_{B_R}\, u \,\le\, C_R\biggl(\inf_{B_{R}}\, u+ \norm{f}_{L^d(B_{2R})}\biggr)\,.
\end{equation*}
\end{remark}

We continue with the following definition.

\begin{definition}\label{D1.1}
We say that $\nu$ points inwards in a bounded domain $D$ if there exists
a domain $D'\Supset D$ such that
$\supp\bigl(\nu(x,\cdot\,)\bigr)\subset D-x$ for all $x\in D'$.
\end{definition}

\begin{theorem}\label{T1.2}
Let $a$ be locally Lipschitz, and $D$ a domain on which $\nu$ points inwards.
Then, for any bounded domain
$\widetilde{D}\Supset D$,
there exists a constant $C_{\mathsf{H}}$ such that every
nonnegative solution $u$ of $\cI u=0$ in $\widetilde{D}$
 satisfies
\begin{equation}\label{ET1.2A}
 u(x) \,\le\, C_{\mathsf{H}}\, u(y)\qquad \forall\,x,y\in D\,.
\end{equation}
\end{theorem}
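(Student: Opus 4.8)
The plan is to follow the proof of \cref{T1.1} almost verbatim --- with the concentric balls there replaced by the domain $D$ together with a chain-of-balls Harnack inequality --- and to make up for the absence of the structural hypotheses \hyperlink{H1}{\rm{(H1)}}--\hyperlink{H2}{\rm{(H2)}} by an interpolation step that turns an $L^1$ estimate on the nonlocal term into the $L^d$ estimate required by the Aleksandrov--Bakelman--Pucci/Krylov--Safonov theory.

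First I would fix bounded domains $D\Subset D_1\Subset D_2\Subset D'\cap\widetilde D$, where $D'\Supset D$ is the domain supplied by \cref{D1.1}; such a tower exists because $\overline D$ is a compact subset of the open set $D'\cap\widetilde D$. Since $D_2\subset D'$, for each $x\in D_2$ the support of $\nu(x,\cdot\,)$ lies in $D-x$, so, setting $\sJ(x)\df\int_{\Rd}u(x+z)\,\nu(x,\D z)$ as in the proof of \cref{T1.1}, the integral $\sJ(x)$ involves $u$ only on $\overline D$; thus $\cI u=0$ in $\widetilde D$ rewrites as $\sA u=-\sJ$ in $D_2$, with $\sJ\ge0$. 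Moreover $u$ is continuous on $\widetilde D$ by the Sobolev embedding $\Sobl^{2,d}\hookrightarrow\cC$, so $m\df\sup_{\overline D}u<\infty$, and $0\le\sJ(x)\le\nu(x)\,m\le\kappa_0\,m$ on $D_2$ with $\kappa_0\df\sup_{D_2}\nu<\infty$ (by \cref{A1.1}); in particular $\sJ\in L^\infty(D_2)\subset L^d(D_1)$.

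Next I would reproduce the probabilistic estimate that leads to \cref{ET1.1F}, but now with the diffusion $\{X_t\}$ of generator $\sA_\circ f=\trace(a\grad^2f)+b\cdot\grad f$ started from an arbitrary point $y\in\overline D$ and killed upon exiting $D_2$. Writing $p^{D_2}$ for the killed transition kernel in $D_2$, $\tau=\uptau_{D_2}$ and $\kappa_1\df\sup_{D_2}\abs{c-\nu}$, It\^o's formula applied to $\sA u=-\sJ$ exactly as in \cref{ET1.1D}, together with $u\ge0$, gives
\[
u(y)\,\ge\,\E^{-2\kappa_1}\,\Exp_y\!\left[\int_1^2\Ind_{\{t<\tau\}}\,\sJ(X_t)\,\D t\right]
\,\ge\,\E^{-2\kappa_1}\int_1^2\!\int_{\overline{D_1}}\sJ(z)\,p^{D_2}(t,y,z)\,\D z\,\D t\,.
\]
By the parabolic Harnack inequality (see \cites{R05,Z96}) there is a constant $\kappa_2>0$, depending only on $D$, $D_1$, $D_2$ and the coefficients, with $p^{D_2}(t,y,z)\ge\kappa_2$ for all $t\in[1,2]$, $y\in\overline D$ and $z\in\overline{D_1}$; hence $\int_{D_1}\sJ\le C_3\,u(y)$ for every $y\in\overline D$, where $C_3\df(\kappa_2\E^{-2\kappa_1})^{-1}$.

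Finally, a standard chain-of-balls argument based on the Harnack inequality for $\sA$ that comes from combining \cite[Theorems~9.20 and~9.22]{GilTru} (applied to $\sA u=-\sJ$) produces a constant $C_1$, independent of $u$, with $u(x)\le C_1\bigl(u(y)+\norm{\sJ}_{L^d(D_1)}\bigr)$ for all $x,y\in D$; inserting the interpolation bound $\norm{\sJ}_{L^d(D_1)}\le\norm{\sJ}_{L^\infty(D_1)}^{(d-1)/d}\norm{\sJ}_{L^1(D_1)}^{1/d}\le(\kappa_0 m)^{(d-1)/d}\bigl(C_3\,u(y)\bigr)^{1/d}$ and taking the supremum over $x\in D$ yields
\[
m\,\le\, C_1\,u(y)+C_1(\kappa_0 m)^{(d-1)/d}\bigl(C_3\,u(y)\bigr)^{1/d}\qquad\text{for all }y\in D\,.
\]
Young's inequality absorbs the last term, $C_1(\kappa_0 m)^{(d-1)/d}(C_3 u(y))^{1/d}\le\tfrac12 m+C_4\,u(y)$, so $m\le2(C_1+C_4)\,u(y)$; taking $C_{\mathsf H}\df2(C_1+C_4)$ --- which depends only on $D$, $\widetilde D$ and the coefficients of $\cI$ --- we conclude $u(x)\le m\le C_{\mathsf H}\,u(y)$ for all $x,y\in D$. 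The point I expect to require the most care is the bookkeeping that makes every constant manifestly independent of $u$ --- in particular extracting the uniform lower bound $\kappa_2$ for the killed kernel $p^{D_2}$ on $[1,2]\times\overline D\times\overline{D_1}$, and invoking the It\^o--Krylov/Feynman--Kac identity for $\Sobl^{2,d}$ solutions; the interpolation-and-Young step replacing \hyperlink{H1}{\rm{(H1)}}--\hyperlink{H2}{\rm{(H2)}} is the one genuinely new ingredient compared with \cref{T1.1}.
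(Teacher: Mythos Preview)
Your argument is correct and shares the essential ingredients with the paper's proof: both rewrite $\cI u=0$ as $\sA u=-\sJ$, exploit the pointing-inwards hypothesis to bound $\sup\sJ$ by a constant times $\sup_D u$, and use the interpolation $\norm{\sJ}_{L^d}\le(\sup\sJ)^{(d-1)/d}\norm{\sJ}_{L^1}^{1/d}$ to pass from an $L^1$ control on $\sJ$ to the $L^d$ control demanded by the Krylov--Safonov Harnack inequality. The difference lies in how the loop is closed. The paper argues by dichotomy: either $\sup_D u\le 2C_0\inf_D u$ and one is done, or $\sup_D u\le 2C_0\norm{\sJ}_{L^d}$, which combined with $\sup\sJ\lesssim\sup_D u$ gives $\sup\sJ\lesssim\norm{\sJ}_{L^1}$; the conclusion then follows from an external result, \cite[Corollary~2.2]{AA-Harnack}, on Harnack inequalities for cooperative systems. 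You instead reproduce the It\^o/Feynman--Kac step from the proof of \cref{T1.1} to obtain directly $\norm{\sJ}_{L^1(D_1)}\le C_3\,u(y)$ for every $y\in D$, and then close with Young's inequality. Your route is more self-contained (no appeal to \cite{AA-Harnack}) at the price of the probabilistic machinery and the uniform lower bound on the killed kernel; the paper's route is shorter on the page but outsources the corresponding work. One minor point: the embedding $\Sobl^{2,d}\hookrightarrow\cC$ you invoke for the continuity of $u$ is borderline as stated; it holds, but via $\Sobl^{2,d}\hookrightarrow\Sobl^{1,q}$ for all $q<\infty$ and then Morrey for $q>d$.
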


\begin{proof}
Let $D'$ be as in \cref{D1.1}.
As in \cref{ET1.1D}, we have
\begin{equation*}
\sup_{D}\, u \,\le\, C_0\biggl(\inf_{D}\, u+ \norm{\sJ}_{L^d(D'\cap\widetilde{D})}\biggr)
\end{equation*}
for some positive constant $C_0$.
Thus, either $\sup_{D}\, u \,\le\, 2 C_0\inf_{D} u$, in which case
\cref{ET1.2A} holds with $C_{\mathsf{H}}=2C_0$, or
\begin{equation}\label{ET1.2B}
\sup_{D}\, u \,\le\, 2 C_0\, \norm{\sJ}_{L^d(D'\cap\widetilde{D})}\,.
\end{equation}
Since $\nu$ points inwards in $D$, \cref{ET1.2B} implies that
\begin{equation}\label{ET1.2C}
\sup_{D'\cap\widetilde{D}}\, \sJ \,\le\, C_0'\, \norm{\sJ}_{L^d(D'\cap\widetilde{D})}
\end{equation}
for some positive constant $C_0'$.
From \cref{ET1.2C}, using the Minkowski inequality, we obtain
\begin{equation*}
\sup_{D'\cap\widetilde{D}}\, \sJ \,\le\, C_0'\, \norm{\sJ}_{L^d(D'\cap\widetilde{D})}
\,\le\, C_0'\,\biggl(\sup_{D'\cap\widetilde{D}} \sJ\biggr)^{\frac{d-1}{d}}\,
\norm{\sJ}^{\nicefrac{1}{d}}_{L^1(D'\cap\widetilde{D})}\,,
\end{equation*}
which implies that
\begin{equation}\label{ET1.2D}
\sup_{D'\cap\widetilde{D}}\, \sJ \,\le\, C_0'\, \norm{\sJ}_{L^1(D'\cap\widetilde{D})}\,.
\end{equation}
\Cref{ET1.1B,ET1.2D} imply a Harnack estimate of the
form \cref{ET1.2A} by \cite[Corollary~2.2]{AA-Harnack}.
\end{proof}

Consider the following hypothesis.

\begin{itemize}
\item[\hypertarget{H3}{\rm{(H3)}}]
There exists an increasing sequence of bounded domains $\{D_n\}_{n\in\NN}$
which covers $\Rd$ and $\nu$ points inwards on $D_n$ for all $n\in\NN$.
\end{itemize}

Recall the definitions in \cref{E1.3,E1.4}.
We have the following theorem concerning
 the existence of a principal eigenfunction for $\lambda_1(\cI)$ on $\Rd$.

\begin{theorem}\label{T1.3}
Grant any of the hypotheses \hyperlink{H1}{\rm{(H1)}}--\hyperlink{H3}{\rm{(H3)}},
Then, the following hold:
\begin{itemize}
\item[(a)]
We have $\lim_{R\to\infty}\lambda(\cI,B_R)=\lambda_1(\cI)$
under \hyperlink{H1}{\rm{(H1)}} or \hyperlink{H2}{\rm{(H2)}},
and 
$$\lim_{n\to\infty}\lambda(\cI,D_n)=
\lim_{n\to\infty}\lambda(\cI,B_n)=\lambda_1(\cI)
$$
under \hyperlink{H3}{\rm{(H3)}}.
\item[(b)]
Suppose that $\lambda_1(\cI)>-\infty$.
Then,
for any $\lambda\le \lambda_1(\cI)$, there exists a positive
$\varphi\in\Sobl^{2,p}(\Rd)$, $p>d$, satisfying
\begin{equation*}
\cI \varphi+\lambda \varphi\,=\, 0\quad \text{in~} \Rd\,.
\end{equation*}
\end{itemize}
\end{theorem}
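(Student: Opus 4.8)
The two parts are proved in tandem; I first isolate the compactness argument that drives both. Let $\{G_n\}_{n\in\NN}$ be bounded domains increasing to $\Rd$ — take $G_n=B_n$ under \hyperlink{H1}{\rm{(H1)}} or \hyperlink{H2}{\rm{(H2)}}, and $G_n=D_n$ under \hyperlink{H3}{\rm{(H3)}} — and let $\mu_n\to\mu$ in $\RR$. Suppose $v_n$ is positive in $G_n$, satisfies $v_n(0)=1$, and solves $\cI v_n+\mu_n v_n=0$ in $G_n$. Viewing this as $(\cI+\mu_n)v_n=0$ and noting that $\cI+\mu_n$ again satisfies \hyperlink{H1}{\rm{(H1)}}--\hyperlink{H3}{\rm{(H3)}} (which restrict only $a$ and $\nu$), \cref{T1.1} under \hyperlink{H1}{\rm{(H1)}}--\hyperlink{H2}{\rm{(H2)}}, resp.\ \cref{T1.2} under \hyperlink{H3}{\rm{(H3)}}, gives for each compact $K\subset\Rd$ a constant $C(K)$ with $\sup_K v_n\le C(K)v_n(0)=C(K)$ for all large $n$; crucially $C(K)$ is \emph{independent of $n$}, since $\{\mu_n\}$ is bounded and the Harnack constants involve $\mu_n$ only through $\abs{\mu_n}$ (via bounds of the form $\sup\abs{c-\nu}+\abs{\mu_n}$) and the interior estimates of \cite{GilTru}. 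Because $\nu$ has locally compact support, $\sJ_n(x)\df\int_{\Rd}v_n(x+z)\,\nu(x,\D{z})$ is then locally bounded uniformly in $n$, so $\sA v_n=-\mu_n v_n-\sJ_n$ is bounded in $\Lpl^{p}$ for every $p<\infty$; interior $\Sob^{2,p}$ estimates and the Sobolev embedding ($p>d$) yield a subsequence converging in $C^1_{\mathrm{loc}}(\Rd)$ and weakly in $\Sobl^{2,p}(\Rd)$ to some $\varphi\ge0$ with $\varphi(0)=1$. In the nonlocal term one passes to the limit by dominated convergence: for $x$ in a compact set the measures $\nu(x,\cdot)$ are carried by one fixed compact set and have uniformly bounded mass, while $v_n(\cdot+z)-v_n\to\varphi(\cdot+z)-\varphi$ locally uniformly; the local terms pass by the $C^1_{\mathrm{loc}}$ and weak $\Sobl^{2,p}$ convergence. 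Hence $\cI\varphi+\mu\varphi=0$ in $\Rd$; finally $\varphi>0$ on $\Rd$ by the strong maximum principle (equivalently, one more application of the Harnack inequality), and the bootstrap above puts $\varphi$ in $\Sobl^{2,p}(\Rd)$ for every $p<\infty$.

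\emph{Part (a).} The map $R\mapsto\lambda(\cI,B_R)$ is nonincreasing and bounded below by $\lambda_1(\cI)$, so $\lambda_\infty\df\lim_{R\to\infty}\lambda(\cI,B_R)$ exists in $[-\infty,\infty)$ — finite from above because $\lambda(\cI,B_1)\in\RR$ by \cref{TA1} — and $\lambda_\infty\ge\lambda_1(\cI)$. If $\lambda_\infty=-\infty$ there is nothing to prove, so assume $\lambda_\infty\in\RR$. Let $\phi_n\in\cC_+(\Rd)$ be the Dirichlet principal eigenfunction of $\cI$ in $B_n$ provided by \cref{TA1}, normalized by $\phi_n(0)=1$; it solves $\cI\phi_n+\lambda(\cI,B_n)\phi_n=0$ in $B_n$. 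Feeding this into the compactness argument with $G_n=B_n$, $\mu_n=\lambda(\cI,B_n)$, $\mu=\lambda_\infty$ produces a positive $\varphi\in\Sobl^{2,d}(\Rd)$ with $\cI\varphi+\lambda_\infty\varphi=0\le0$ in $\Rd$; by \cref{E1.3} this forces $\lambda_\infty\le\lambda_1(\cI)$, hence $\lambda_\infty=\lambda_1(\cI)$. Under \hyperlink{H3}{\rm{(H3)}} the identical argument with $G_n=D_n$ and \cref{T1.2} gives $\lim_n\lambda(\cI,D_n)=\lambda_1(\cI)$; since every $B_n$ lies in some $D_m$ and every $D_n$ in some $B_m$, monotonicity of $D\mapsto\lambda(\cI,D)$ then also gives $\lim_n\lambda(\cI,B_n)=\lambda_1(\cI)$.

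\emph{Part (b).} Assume $\lambda_1(\cI)>-\infty$ and fix $\lambda\le\lambda_1(\cI)$. If $\lambda=\lambda_1(\cI)$, the function $\varphi$ from part~(a) already satisfies $\cI\varphi+\lambda\varphi=0$ in $\Rd$ and lies in $\Sobl^{2,p}(\Rd)$ for all $p<\infty$ by the bootstrap. Suppose $\lambda<\lambda_1(\cI)$. Then $\lambda<\lambda_1(\cI)\le\lambda(\cI,B_n)$ for every $n$, so the maximum principle holds for $\cI+\lambda$ in $B_n$, and the Dirichlet problem
\begin{equation*}
\cI v_n+\lambda v_n=0\quad\text{in }B_n,\qquad v_n=1\quad\text{in }\Rd\setminus B_n
\end{equation*}
has a unique solution, positive in $B_n$. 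Normalizing by $v_n(0)=1$ and running the compactness argument with $G_n=B_n$ and $\mu_n=\mu=\lambda$ for all $n$ yields a positive $\varphi\in\Sobl^{2,p}(\Rd)$, $p>d$, with $\cI\varphi+\lambda\varphi=0$ in $\Rd$. Under \hyperlink{H3}{\rm{(H3)}} one argues identically with $B_n$ replaced by $D_n$ and \cref{T1.1} by \cref{T1.2}.

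The delicate points are the $n$-uniformity of the Harnack constants (which rests on $\{\lambda(\cI,B_n)\}$ lying in a bounded interval) and the passage to the limit in $I[\cdot,x]$, where the possible singularity of $\nu(x,\cdot)$ is harmless thanks to its locally compact support together with the local uniform bounds. The one genuinely non-routine choice is, for $\lambda<\lambda_1(\cI)$, to impose the exterior datum $v_n\equiv1$ on $\Rd\setminus B_n$: with the naive zero datum one would have to insert a forcing term, and the limit would then be, in general, only a positive \emph{supersolution} of $\cI+\lambda$ — as is already visible for $\cI=\Delta$ on $\RR$ with $\lambda<0$ — whereas normalizing the solutions with datum $1$ by their value at $0$ returns a genuine positive solution.
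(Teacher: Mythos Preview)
Your proof is correct; part~(a) matches the paper's argument verbatim (Dirichlet eigenfunctions on $B_n$, Harnack, $\Sob^{2,p}$ compactness, strong maximum principle for $\sA$). For part~(b) you take a genuinely different route. The paper solves
\[
\cI\varphi_n+\lambda\varphi_n=f_n\ \ \text{in }B_n,\qquad \varphi_n=0\ \ \text{in }B_n^c,
\]
with $f_n$ smooth, nonpositive, nonzero and $\supp f_n\Subset B_n\setminus B_{n-1}$; positivity comes from \cref{TA3}, and after normalizing at the origin the forcing term vanishes on any fixed compact set for large $n$, so the limit solves $\cI\varphi+\lambda\varphi=0$. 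You instead solve the homogeneous equation with exterior datum $1$ and rescale by $v_n(0)$; positivity then comes from the refined maximum principle (\cref{CA1}) for $\cI+\lambda$, and the compactness argument is identical. Both tricks serve the same purpose---producing a positive interior solution of the homogeneous equation---and both work. Your closing comment is slightly off the mark: the ``naive'' alternative with zero datum and a forcing term \emph{does} give a solution in the limit, not just a supersolution, precisely because the paper pushes $\supp f_n$ out to infinity; the failure you describe occurs only when the forcing is held fixed.

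One small technical gap: your Dirichlet problem reduces (via $v_n=1+w_n$) to a problem with right-hand side $-(c+\lambda)$, which under \cref{A1.1} is merely locally bounded, not continuous, so \cref{TA3} as stated does not apply directly. You should either observe that the monotone-iteration proof of \cref{TA3} goes through for $f\in L^\infty(D)$, or replace the constant exterior datum by a smooth one and invoke \cref{TA3} after subtracting. The paper sidesteps this by taking $f_n$ smooth from the outset.
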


\begin{proof}
We start with part (a).
Since we have a Harnack inequality under the above hypotheses, the proof
follows a standard argument (cf.\ \cite{Berestycki-15}).
We show that $\lim_{n\to\infty}\lambda(\cI,B_n)=\lambda_1(\cI)$,
and there exists a positive
eigenfunction corresponding to the eigenvalue $\lambda_1(\cI)$.
Without loss of generality, we assume that $D_n=B_n$.
Define $\Hat\lambda\df\lim_{n\to\infty}\lambda(\cI,B_n)$.
It follows from this definition that $\Hat\lambda\ge \lambda_1(\cI)$.
If  $\Hat\lambda=-\infty$, then $\lambda_1(\cI)=-\infty$ and there is nothing to prove.
So we suppose that $\Hat\lambda\in (-\infty, \infty)$.
Let $\psi_n$ be the Dirichlet principal eigenfunction in 
$B_n$ corresponding to the eigenvalue $\lambda(\cI,B_n)$ (see \cref{TA1}) i.e. 
\begin{equation}\label{ET1.3A}
\begin{aligned}
\cI \psi_n &\,=\, -\lambda(\cI,B_n)\, \psi_n\quad \text{in~} B_n\,,\\
\psi_n&\,=\,0\quad \text{in~} B_n^c\,,\\
\psi_n&>0\quad \text{in~} B_n\,.
\end{aligned}
\end{equation}
Applying \cref{T1.1} or \cref{T1.2} we see that for any compact set $K$,
there exists an integer $m(K)$ such that 
\begin{equation}\label{ET1.3B}
\sup_{n\ge m(K)}\, \sup_{K}\, \psi_n\,\le\, C\,,
\end{equation}
for some constant $C$ dependent on $K$. Writing \cref{ET1.3A} as
\begin{equation*}
\sA\psi_n=-\sJ_n(x) \,=\, -\int_{\Rd} \psi_n(x+y) \nu(x,\D{y})\,.
\end{equation*}
Then, by \cref{A1.1}\,(iii) and \cref{ET1.3B}, we obtain
\begin{equation*}
\sup_{n\ge m_1(K)}\, \sup_{K}\, \sJ_n\,\le\, C_1
\end{equation*}
for some constant $C_1$ and some integer $m_1(K)\ge m(K)$.
Thus, from standard elliptic estimates, we obtain a constant $C_2$ for every
$p>d$ satisfying
\begin{equation*}
\sup_{n\ge m_2(K)}\,  \norm{\psi_n}_{\Sob^{2,p}(K)} \,\le\,C_2
\end{equation*}
for some integer $m_2(K)$. Therefore, using a standard diagonalization argument
we can extract a 
subsequence $\psi_{n_k}$ converging to a nonnegative $\psi\in\Sobl^{2,p}(\Rd)$, $p>d$,
and $\psi(x_0)=1$
as $n_k\to\infty$. By \cref{A1.1} we also obtain
\begin{equation*}\cI\psi + \Hat\lambda\psi\,=\, 0\quad \text{in~} \Rd.\end{equation*}
Since $(\sA +\Hat\lambda)\psi\le 0$, by the strong maximum principle
we also have $\psi>0$ in $\Rd$.
Therefore, $\psi$ is an admissible test function in \cref{E1.3} implying
that $\Hat\lambda\le\lambda_1(\cI)$. Hence
$\Hat\lambda=\lambda_1(\cI)$ and this completes the proof of step 1.

We continue with part (b) and show that for every $\lambda<\lambda_1(\cI)$ there exists a
positive eigenfunction.
Fix $\lambda<\lambda_1(\cI)$. Let $f_n$ be a smooth, non-positive, nonzero function with 
$\supp(f_n)\Subset B_n\setminus B_{n-1}$.
By \cref{TA3}, since $\lambda(\cI,B_n)-\lambda>0$,
 there exists a unique positive $\varphi_n$ satisfying
\begin{equation*}
\begin{split}
\cI \varphi_n + \lambda\varphi_n &\,=\, f_n \quad \text{in~} B_n\,,\\
\varphi_n &\,=\,0 \quad \text{in~} B_n^c\,.
\end{split}
\end{equation*}
Normalize $\varphi_n$ to satisfy $\varphi_n(x_0)=1$ and then we can use the
Harnack inequality as in Step~1 to extract a converging subsequence of
$\varphi_n$ to some positive function
$\varphi\in\Sobl^{2,p}(\Rd)$, $p>d$.
Thus, we obtain $\cI \varphi + \lambda\varphi=0$ in $\Rd$. This completes the proof.
\end{proof}

Our next result deals with another class of kernels not covered by \cref{T1.3}.
We denote by $\lambda(\sA,D)$ the Dirichlet principal eigenvalue of $\sA$
in a smooth domain $D$, possibly unbounded, i.e.,
\begin{equation*}
\lambda(\sA,D)\,\df\,
\sup\,\Bigl\{\lambda\in\RR\,\colon \exists \, \phi\in\cC_+(\Rd)\cap\Sobl^{2,d}(D)
\text{~satisfying~} \sA \phi+\lambda \phi \,\le\, 0 \text{~in~} D \text{~and~}\phi>0
\text{~in~}D\Bigr\}\,.
\end{equation*}
For $D=\Rd$ this principal eigenvalue will be denoted as $\lambda_1(\sA)$.
It is evident from the definition that $\lambda(\sA,D)\ge \lambda(\cI,D)$.

\begin{theorem}\label{T1.4}
We assume the following:
\begin{itemize}
\item[(1)]
The matrix $a$ is bounded, uniformly elliptic and uniformly continuous in $\Rd$.
\item[(2)]
For
some $\alpha\ge 0$ we have $|b(x)|\le C(1+\abs{x}^\alpha)$ and
$|c(x)|\le C(1+\abs{x}^{2\alpha})$
for all $x\in\Rd$.
\item[(3)]
There exists some open ball $\sB_\circ\subset\Rd$ such that
$\supp\bigl(\nu(x,\cdot\,)\bigr)\subset\sB_\circ$ for all $x\in\Rd$, and
\begin{equation}\label{ET1.4A}
\lim_{\abs{x}\to \infty}\nu(x,\Rd)\,\E^{\abs{x}^\alpha} \,=\, 0\,.
\end{equation}
\end{itemize}
Then
\begin{equation}\label{ET1.4B}
\lim_{n\to\infty}\,\lambda(\cI,B_n)\,=\,\lambda_1(\cI)\,.
\end{equation}
In addition, if we also have
\begin{equation}\label{ET1.4C}
\lim_{n\to\infty} \lambda(\sA,B_n^c)\,>\,\lambda_1(\cI)\,>\,-\infty\,,
\end{equation} 
then there exists a principal eigenfunction for $\lambda_1(\cI)$.
\end{theorem}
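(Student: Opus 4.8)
The plan is to establish \cref{ET1.4B} by an exhaustion argument based on resolvent solutions, and then the existence of a principal eigenfunction by supplying an \emph{exterior barrier} built from \cref{ET1.4C}. Throughout write $\lambda_1:=\lambda_1(\cI)$, fix $r_0>0$ with $\sB_\circ\subset B_{r_0}$, and note that $\lambda(\cI,B_n)$ is non-increasing in $n$ and bounded below by $\lambda_1$, so that $\widehat\lambda:=\lim_n\lambda(\cI,B_n)=\inf_n\lambda(\cI,B_n)\ge\lambda_1$ exists (the case $\widehat\lambda=-\infty$ being trivial). For \cref{ET1.4B} it suffices to produce, for each $\lambda<\widehat\lambda$, a positive $\varphi\in\Sobl^{2,p}(\Rd)$, $p>d$, with $\cI\varphi+\lambda\varphi\le0$ in $\Rd$: this forces $\lambda\le\lambda_1$ by \cref{E1.3}, and letting $\lambda\uparrow\widehat\lambda$ gives equality. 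Fix such a $\lambda$, so $\lambda<\lambda(\cI,B_n)$ for every $n$; by \cref{TA3} there is $\varphi_n>0$ in $B_n$, $\varphi_n=0$ on $B_n^c$, solving $\cI\varphi_n+\lambda\varphi_n=h$ in $B_n$, where $h\le0$, $h\not\equiv0$, $\supp h\subset B_1$; the comparison principle (valid since $\lambda<\lambda(\cI,B_n)$) together with $\varphi_n\ge0$ gives $\varphi_{n+1}\ge\varphi_n$, hence $\varphi_n\uparrow\varphi$ pointwise. The substantive point is that $\varphi$ is finite and locally bounded: writing $\sA\varphi_n=h-\lambda\varphi_n-\sJ_n$ with $\sJ_n(x)=\int_{\Rd}\varphi_n(x+z)\,\nu(x,\D z)$, the \emph{globally} bounded support of $\nu$ in assumption~\ttup{3} yields $\sJ_n(x)\le\nu(x,\Rd)\,\norm{\varphi_n}_{L^\infty(K+\sB_\circ)}$ for $x\in K$, so the nonlocal term is controlled by $\varphi_n$ on a fixed compact enlargement of $K$, with no decay of $\varphi_n$ at infinity needed; combining the weak Harnack inequality of \cite{M19} (which, for nonnegative supersolutions, carries no adverse tail term), interior $L^p$-estimates for $\sA_\circ$, the growth bounds \ttup{2}, a finite covering and the monotone convergence $\varphi_n\uparrow\varphi$, one obtains $\sup_n\norm{\varphi_n}_{L^\infty(K)}<\infty$ for every compact $K$. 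Then $\varphi_n\to\varphi$ in $C^1_{\mathrm{loc}}(\Rd)$ and weakly in $\Sobl^{2,p}(\Rd)$ along a subsequence, the nonlocal integrals pass to the limit by dominated convergence, and $\cI\varphi+\lambda\varphi=h\le0$ with $\varphi>0$ by the strong maximum principle applied to $(\sA+\lambda)\varphi=h-\sJ\le0$. This proves \cref{ET1.4B}.

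Assume now in addition \cref{ET1.4C}; by the first part $\lambda_1=\widehat\lambda>-\infty$. Since $\lambda(\sA,B_R^c)\uparrow\lim_n\lambda(\sA,B_n^c)>\lambda_1$ as $R\to\infty$, fix $R_0$ large and $\mu$ with $\lambda_1<\mu<\lambda(\sA,B_{R_0}^c)$, together with a positive $\Phi\in\Sobl^{2,p}(B_{R_0}^c)$ satisfying $\sA\Phi+\mu\Phi\le0$ in $B_{R_0}^c$. The key --- and most delicate --- step is to arrange, on enlarging $R_0$ and taking $\Phi$ with controlled local oscillation (built explicitly from \ttup{2}, of the type $\Phi\asymp\E^{-\eta\langle x\rangle^{\alpha}}$ suitably modulated so as to remain a supersolution of $\sA+\mu$, which is where \cref{ET1.4C} enters), that
\begin{equation*}
\int_{\Rd}\Phi(x+z)\,\nu(x,\D z)\;\le\;\nu(x,\Rd)\,\sup_{x+\sB_\circ}\Phi\;\le\;\tfrac12(\mu-\lambda_1)\,\Phi(x)\qquad\text{for }\abs{x}\ge R_0':=R_0+r_0+1.
\end{equation*}
This is exactly where assumptions \ttup{2} and \ttup{3} must be matched: by \cref{ET1.4A} one has $\nu(x,\Rd)=\sorder\bigl(\E^{-\abs{x}^{\alpha}}\bigr)$, while the polynomial growth of $b$ and $c$ forces the oscillation ratio $\sup_{x+\sB_\circ}\Phi/\Phi(x)$ to be only $\E^{\sorder(\abs{x}^{\alpha})}$, which the decay of $\nu(\cdot,\Rd)$ beats. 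Granting the estimate, for every $\sigma\le\tfrac12(\mu+\lambda_1)$ and every $M>0$ one has, in $B_{R_0'}^c$,
\begin{equation*}
(\cI+\sigma)(M\Phi)\;=\;M\Bigl[(\sA\Phi+\mu\Phi)+(\sigma-\mu)\Phi+\int_{\Rd}\Phi(x+z)\,\nu(x,\D z)\Bigr]\;\le\;0.
\end{equation*}

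To conclude, let $\psi_n$ be the Dirichlet principal eigenfunction of $\cI$ in $B_n$ (\cref{TA1}) for the eigenvalue $\lambda(\cI,B_n)\downarrow\lambda_1$, normalized by $\norm{\psi_n}_{L^\infty(\Bar B_{R_0'})}=1$. For $n$ large, $\lambda(\cI,B_n)\le\tfrac12(\mu+\lambda_1)$, so by the previous step $M_*\Phi$ with $M_*:=1/\inf_{\partial B_{R_0'}}\Phi$ is a supersolution of $\cI+\lambda(\cI,B_n)$ on the annulus $B_n\setminus\Bar B_{R_0'}$ dominating $\psi_n$ on $\partial B_{R_0'}$ (where $\psi_n\le1$) and on $\partial B_n$ (where $\psi_n=0$); the maximum principle on this strict subdomain of $B_n$ gives $\psi_n\le M_*\Phi$ there, and since $\Phi$ is bounded on $B_{R_0'}^c$ we obtain $\sup_n\norm{\psi_n}_{L^\infty(\Rd)}<\infty$ (the radii being adjusted in the routine way to absorb the jump range $r_0$). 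Passing to a subsequence $\psi_{n_k}\to\psi$ in $C^1_{\mathrm{loc}}(\Rd)$ and weakly in $\Sobl^{2,p}(\Rd)$, we get $\cI\psi+\lambda_1\psi=0$ in $\Rd$, while $\norm{\psi}_{L^\infty(\Bar B_{R_0'})}=1$ so $\psi\not\equiv0$; the strong maximum principle then yields $\psi>0$, so $\psi$ is a principal eigenfunction for $\lambda_1(\cI)$.

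The main obstacle is the construction in the second paragraph: \cref{ET1.4C} only supplies \emph{some} positive supersolution of $\sA+\mu$ near infinity, and the work is to upgrade it to one whose oscillation over the fixed-size jump set $\sB_\circ$ is negligible against the decay rate of $\nu(\cdot,\Rd)$ --- precisely the role of the calibration between \ttup{2} and \ttup{3}. The secondary difficulty is the uniform local bound in the first paragraph, where the failure of a full Harnack inequality for $\cI$ (see \cite{ACGZ}) must be bypassed using the weak Harnack inequality together with the globally bounded support of $\nu$.
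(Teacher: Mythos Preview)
Your argument for \cref{ET1.4B} has a genuine gap at the step ``$\sup_n\norm{\varphi_n}_{L^\infty(K)}<\infty$ for every compact $K$.'' The weak Harnack inequality of \cite{M19} bounds only an $L^{p_0}$ norm of a nonnegative supersolution by its infimum; it supplies no upper bound. Bounding $\varphi_n$ from above via the local maximum principle for $\sA$ feeds the nonlocal term $\sJ_n$ into the right-hand side, and your own observation that $\sJ_n$ on $K$ is controlled by $\varphi_n$ on $K+\sB_\circ$ means each estimate pushes the problem outward by a fixed amount; iterating to reach $\partial B_n$ takes $O(n)$ steps with compounding constants, and no $n$-independent bound follows. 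Notice that your argument, as written, uses only the globally bounded jump support and none of the decay in \cref{ET1.4A} or the growth bounds~\ttup{2}: if it worked it would establish \cref{ET1.4B} far more generally than the theorem claims, which is a warning sign given that the full Harnack inequality for $\cI$ fails by \cite[Example~1.1]{ACGZ}.

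The paper's route is essentially different. It argues by contradiction, perturbing to $\cI+\delta\Ind_{B_1}$ so as to create a strict gap $\widetilde\lambda(0)>\widetilde\lambda(\delta)>\lambda_1(\cI)$, and then manufactures an \emph{a~priori} upper barrier from the principal eigenfunction $\varphi$ of the \emph{local} operator $\sA$. The crucial technical input --- missing from your first paragraph --- is the gradient estimate $\babs{\grad\log\varphi(x)}\le C(1+\abs{x}^\alpha)$ from \cite[Lemma~4.1]{ABBK-19}, which requires assumptions~\ttup{1} and~\ttup{2}; combined with \cref{ET1.4A} it yields $\cI\varphi+\mu\varphi<0$ outside a compact set for suitable $\mu>\lambda_1(\cI)$, and a touching comparison of the Dirichlet eigenfunctions against $\varphi$ then produces the compactness. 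Your second paragraph correctly identifies this oscillation-versus-decay balance for the exterior problem, but the \emph{same} mechanism is what is needed already for \cref{ET1.4B}. (Two smaller points on the second part: the oscillation bound on $\Phi$ is not delivered by an explicit ansatz $\Phi\asymp\E^{-\eta\langle x\rangle^\alpha}$ but by the gradient estimate above applied to the exterior $\sA$-eigenfunction; and for the nonlocal comparison on $B_n\setminus\Bar B_{R_0'}$ you need $M_*\Phi\ge\psi_n$ on all of $\Bar B_{R_0'}$, not just on $\partial B_{R_0'}$ --- the paper handles this by extending $\Phi$ to $\Rd$ via a cutoff and running a touching argument with $\rho_n=\min_{B_n}\Phi/\psi_n$ rather than a boundary comparison.)
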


\begin{proof}
We first show that $\lambda_1(\sA)>-\infty$ and
$\sup_{x\in\Rd}\,\nu(x,\Rd)\,\E^{\abs{x}^\alpha}<\infty$ imply that
$\lambda_1(\cI)>-\infty$.
This is because if $\lambda_1(\sA)>-\infty$, then
there exists a positive $\varphi\in \Sobl^{2,p}(\Rd)$, $p>d$, satisfying
\begin{equation}\label{PT1.4A}
\sA \varphi + \lambda_1(\sA) \varphi
\,=\, 0\quad \text{in~} \Rd\,.
\end{equation}
By the assumption on the coefficients of the operator $\sA$,
and using \cite[Lemma~4.1]{ABBK-19}, we obtain from \cref{PT1.4A} that
\begin{equation}\label{PT1.4B}
\frac{\abs{\grad \varphi(x)}}{\varphi(x)}
\,\le\, C_1(1+|x|^\alpha)\quad \forall\,x\in\Rd,
\end{equation}
for some constant $C_1$.
Let $f(x)\df\log \varphi(x)$.
Then,  it follows from \cref{PT1.4B} that $\abs{\grad f(x)}\le C_1(1+|x|^\alpha)$.
Hence we have
\begin{equation*}
\sup_{y\in\sB_\circ}\,\abs{f(x+y)-f(x)} \,\le\, C_2(1+|x|^\alpha)\quad
\forall\, x\in\Rd\,.
\end{equation*}
This implies that
\begin{equation*}
\sup_{x\in\Rd}\, \sup_{y\in\sB_\circ}\,\frac{\varphi(x+y)}{\varphi(x)}
\,<\, C_3 \E^{\abs{x}^\alpha}
\end{equation*}
for some constant $C_3$.
Thus we obtain
\begin{equation}\label{PT1.4C}
\begin{aligned}
\frac{1}{\varphi} \cI \varphi &\,=\, \frac{1}{\varphi} \sA \varphi
+ \int_{\sB_\circ} \frac{\varphi(x+y)}{\varphi(x)} \nu(x,\D y)\\
&\,\le\,  -\lambda_1(\sA) + 
C_3 \sup_{x\in\Rd}\,\nu(x,\sB_\circ) \E^{|x|^\alpha}\,,
\end{aligned}
\end{equation}
which shows that $\lambda_1(\cI)>-\infty$.

We proceed to prove \cref{ET1.4B}.
In view of the preceding paragraph, if $\lambda_1(\cI)=-\infty$,
then we must have
\begin{equation*}
-\infty \,=\, \lambda_1(\sA) \,=\,
\lim_{n\to\infty} \lambda(\sA,B_n) \,\ge\,
\lim_{n\to\infty} \lambda(\cI,B_n) \,\ge\, \lambda_1(\cI)
\end{equation*}
and \cref{ET1.4B} holds.
Therefore,
without loss of generality, we
assume that $\lambda_1(\cI)>-\infty$.
For $\delta\in\RR$ we define 
\begin{equation*}
\widetilde\lambda(\delta) \,=\,
\lim_{n\to\infty} \lambda(\cI + \delta \Ind_{B_1},B_n).
\end{equation*}
From this definition it follows that 
$\widetilde\lambda(\delta)\ge \lambda_1(\cI+\delta \Ind_{B_1})$
for all $\delta\in\RR$.
We prove \cref{ET1.4B} using the argument of contradiction.
Suppose that $\widetilde\lambda(0)> \lambda_1(\cI)$.
Since $\delta\mapsto \lambda(\cI + \delta \Ind_{B_1},B_n)$ is concave and
decreasing \cite[Theorem~2.3]{AB19}, it follows that
$\delta\mapsto \widetilde\lambda(\delta)$ is concave and thus continuous.
Moreover,
\begin{equation*}
\widetilde\lambda(\delta) \,\le\, \lambda(\cI + \delta \Ind_{B_1},B_1)
\,=\, \lambda(\cI,B_1)-\delta\rightarrow\,-\infty
\end{equation*}
as $\delta\to\infty$.
This means that we can select $\delta>0$ such that
\begin{equation}\label{ET1.4D}
\widetilde\lambda(0) \,>\,\widetilde\lambda(\delta) \,>\, \lambda_1(\cI)\,.
\end{equation}
Since
\begin{equation*}
\lambda_1(\sA) \,=\,
\lim_{n\to\infty} \lambda(\sA,B_n) \,\ge\,
\lim_{n\to\infty} \lambda(\cI,B_n)\,=\,\widetilde\lambda(0)\,,
\end{equation*}
 it also holds that
$\lambda_1(\sA)> \widetilde\lambda(\delta)$.
Let
$\varepsilon\df \frac{1}{2}\bigl(\lambda_1(\sA)-\widetilde\lambda(\delta)\bigr)>0$.

We write \cref{PT1.4A} as
\begin{equation}\label{ET1.4E}
\sA \varphi + \bigl(\widetilde\lambda(\delta)+2\varepsilon\bigr) \varphi
\,=\, 0\quad \text{in~} \Rd\,.
\end{equation}
Thus, using \cref{ET1.4A,PT1.4C,ET1.4E}, we obtain
\begin{equation}\label{ET1.4G}
\begin{aligned}
\frac{1}{\varphi} \cI \varphi &\,=\, \frac{1}{\varphi} \sA \varphi
+ \int_{\sB_\circ} \frac{\varphi(x+y)}{\varphi(x)} \nu(x,\D y)\\
&\,\le\,  -\widetilde\lambda(\delta)-2\varepsilon + 
C_3 \nu(x,\sB_\circ) \E^{|x|^\alpha}\\
&\,<\, -\widetilde\lambda(\delta)-\varepsilon
\end{aligned}
\end{equation}
for all $x$ outside a ball $B_{k_0}$, for some $k_0\in\NN$.
Now consider the Dirichlet eigenvalue problem
\begin{equation}\label{ET1.4H}
\cI \psi_n + \delta \Ind_{B_1}\psi_n + \lambda(\cI + \delta \Ind_{B_1},B_n) \psi_n
\,=\, 0 \quad \text{in~} B_n\,,
\quad \text{and}\quad \psi_n=0\quad \text{on~} B_n^c\,.
\end{equation}
Let
\begin{equation*}
\kappa_n\,\df\,\min_{B_n}\frac{\varphi}{\psi_n}\,,\quad\text{and}\quad
\varphi_n\,\df\,\kappa_n\psi_n\,.
\end{equation*}
Fix $n_0$ large so that $\lambda(\cI+\delta \Ind_{B_1}, B_n)<\tilde{\lambda}(\delta)
+\varepsilon$ for all $n\geq n_0$.
By \cref{ET1.4G,ET1.4H}, we have
\begin{equation}\label{ET1.4I}
\cI (\varphi-\varphi_n)
+ \lambda(\cI + \delta \Ind_{B_1},B_n) (\varphi-\varphi_n) \,<\,0
\qquad\forall\,x\in B_n\setminus B_{k_0}\,.
\end{equation}
Thus, since $\varphi\ge\varphi_n$ on $B_n$, it follows from
\cref{ET1.4I} and the
strong maximum principle that $\varphi-\kappa_n\psi_n$ can not vanish in
$B_n\setminus B_{k_0}$, which implies that the minimum
of $\frac{\varphi}{\psi_n}$
is attained in $B_{k_0}$ for all $n>k_0\vee n_0$.
We write \cref{ET1.4H} as
\begin{equation}\label{ET1.4J}
\sA\varphi_n + \delta \Ind_{B_1}\varphi_n + \lambda(\cI+\delta\Ind_{B_1},B_n) \varphi_n
\,=\, -\sJ_n \quad\text{in\ } B_n\,,
\end{equation}
with
\begin{equation*}
\sJ_n(x) \,\df\, \int_{\Rd} \varphi_n(x+z) \nu(x,\D{z}) \,\le\,
\int_{\Rd}\varphi(x+z)\nu(x,\D{z}) \,=:\,\widehat\sJ(x)\,.
\end{equation*}
By \cite[Theorem~9.20 and 9.22]{GilTru}, we obtain from \cref{ET1.4J} the
bound
\begin{equation*}
\sup_{B_R}\,\varphi_n\,\le\, C_R \biggl(\inf_{B_{2R}}\,\varphi_n
+ \norm{\sJ_n}_{L^d(B_{2R})}\biggr)
\,\le\, C_R \biggl(\sup_{B_{k_0}}\,\varphi
+ \norm{\widehat\sJ}_{L^d(B_{2R})}\biggr)\,,
\end{equation*}
which is valid for all $R$ and $n$ such that $n>2R>2k_0$.
Thus, by a standard elliptic estimate, the sequence
$\{\varphi_n\}_{n\in\NN}$ is uniformly bounded in $\Sob^{2,p}(B_R)$, $p>d$,
for any $R>0$, and this permits us to
extract a subsequence converging weakly in $\Sobl^{2,p}(\Rd)$ to some nonnegative
function $\psi\in\Sobl^{2,p}(\Rd)$.
Passing to the limit as $n\to\infty$ in \cref{ET1.4J}, we obtain
\begin{equation}\label{ET1.4K}
\sA\psi + \delta \Ind_{B_1}\psi + \widetilde\lambda(\delta) \psi 
\,=\, -\int_{\Rd}\psi(x+y)\nu(x,\D{y})\,.
\end{equation}
Since, as we showed earlier, $\varphi_n$ touches $\varphi$ at one point
from below in $\Bar{B}_{k_0}$,  we must have $\psi>0$ in $\Rd$
by the strong maximum principle.
\cref{ET1.4K} then implies that
$\cI\psi + \widetilde\lambda(\delta) \psi \le 0$ in $\Rd$,
which contradicts the inequality
$\lambda_1(\cI)< \widetilde\lambda(\delta)$ in \cref{ET1.4D}.
This completes the proof of \cref{ET1.4B}.

Next we establish the existence of a principal eigenfunction
under \cref{ET1.4C}.
Choose $n_\circ$ large enough so that
$\lambda(\sA,B_{n_\circ}^c)>\lambda_1(\cI)$,
and let 
$\varphi_{n_\circ}\in\Sobl^{2,p}(B^c_{n_\circ})$ be such that
\begin{equation}\label{ET1.4L}
\sA\varphi_{n_\circ} + \lambda(\sA,B_{n_\circ}^c) \varphi_{n_\circ}
\,=\, 0\quad \text{in~} B^c_{n_\circ},
\quad \text{and}\quad \varphi_{n_\circ}>0
\text{~in~} \Bar{B}^c_{n_\circ}\,.
\end{equation}
Existence of the eigenfunction $\varphi_{n_\circ}$ follows from \cite{Berestycki-15}.
Let $\zeta\colon\Rd\to [0,1]$ be a smooth cut-off function such
that $\zeta=1$ in $B_{n_\circ+1}$ and $\zeta=0$ in $B^c_{n_\circ+2}$.
Define $\Phi\df\zeta+ (1-\zeta)\varphi_{n_\circ}\in\Sobl^{2,p}(\Rd)$.
By the definition of $\zeta$ and \cref{ET1.4L},
we have $\sA \Phi + \lambda(\sA,B_{n_\circ}^c)\Phi=0$ in $\Bar{B}^c_{n_\circ+2}$.
This means that
\cref{PT1.4B} holds for all $x$ outside some compact set,
and we can follow the  argument in the first part of the proof (see \cref{ET1.4G}),
combined with the fact that $\supp\bigl(\nu(x,\cdot\,)\bigr)\subset\sB_\circ$,
to find a ball $B_{k_0}$ satisfying 
\begin{equation*}
\cI\Phi + (\lambda(\sA,B_{n_\circ}^c)-\varepsilon)\Phi \,<\, 0\quad \text{in~}
B_{k_0}^c\,,
\end{equation*}
where $\varepsilon>0$ is chosen
small enough to satisfy $\lambda(\sA,B_{n_\circ}^c)-\varepsilon>\lambda_1(\cI)$.
Now consider the principal Dirichlet eigenfunction in $B_n$ satisfying
\begin{equation}\label{ET1.4M}
\begin{split}
\cI \psi_n &\,=\, -\lambda(\cI,B_n)\, \psi_n\quad \text{in~} B_n\,,\\
\psi_n&\,=\,0\quad \text{in~} B_n^c\,,\\
\psi_n&>0\quad \text{in~} B_n\,.
\end{split}
\end{equation}
Let $\rho_n=\min_{B_n}\frac{\Phi}{\psi_n}$ and define $\widehat\psi_n=\rho_n\psi_n$.
Employing the argument in the first part of the proof, we can show
that $\widehat\psi_n$ touches $\Phi$ at one point from below in $\Bar{B}_{k_0}$,
and that the sequence $\{\widehat\psi_n\}_{n\in\NN}$ is bounded in
$\Sob^{2,p}(B_R)$, $p>d$, for any $R>0$.
We extract a subsequence converging in $\Sobl^{2,p}(\Rd)$, $p>d$, to some 
positive $\widehat\psi$, and pass to the limit in \cref{ET1.4M},
after multiplying this equation with $\rho_n$,
to obtain
$\cI \widehat\psi + \lambda_1(\cI)\widehat\psi=0$ in $\Rd$. This completes the proof.
\end{proof}

\begin{remark}\label{R1.4}
Note that \cref{ET1.4C} is used to construct a suitable super-solution that allows
us to pass the limit.
This can be replaced by the following condition:
there exists a positive $V\in\Sobl^{2,d}(\Rd)$ satisfying 
\begin{equation*}
\cI V + (\lambda_1(\cI) + \varepsilon) V  \,\le\, 0\quad \text{in~} K^c\,,
\end{equation*}
for some compact set $K$ and $\varepsilon>0$.
For instance, if $\lim_{|x|\to\infty} c(x)=-\infty$, then we can take $V=1$.
\end{remark}

\begin{remark}
If there exists a principal eigenfunction for $\lambda_1(\cI)$,
and $\cA$ is strictly right-monotone at $\lambda_1(\sA)$
(see \cite{ABS-19} and \cref{D2.3}) and $\nu$ is nontrivial, then
$\lambda_1(\cI)<\lambda_1(\sA)$, and therefore \cref{ET1.4C}  holds.
\end{remark}

\section{Maximum principles and simplicity of eigenvalues}\label{S2}

Throughout this section we assume that $\lambda_1(\cI)>-\infty$.
Following \cite{Berestycki-15}, we define the eigenvalues
$\lambda^\prime_1(\cI)$ and $\lambda^{\prime\prime}_1(\cI)$ by
\begin{align*}
\lambda^\prime_1(\cI) &\,\df\, \inf\,\Bigl\{\lambda\in\RR\,\colon
\exists \text{~positive~} \phi\in\Sobl^{2,d}(\Rd)\cap \Lp^\infty(\Rd)
  \text{~satisfying~} \cI \phi+\lambda \phi\ge 0 \text{~in~} \Rd \Bigr\}\,,
\intertext{and}
\lambda^{\prime\prime}_1(\cI) &\,\df\, \sup\,\Bigl\{\lambda\in\RR\,\colon
\exists\,  \phi\in\Sobl^{2,d}(\Rd)  \text{~satisfying~} \cI \phi+\lambda \phi \,\le\, 0
\text{~in~}\Rd \text{~and~} \inf_{\Rd}\phi>0 \Bigr\}\,.
\end{align*}

First, we develop a maximum principle.

\begin{definition}
We say that the operator $\cI$ satisfies the \emph{maximum principle}
(on $\Rd$) if for every function $u\in\Sobl^{2,d}(\Rd)$ 
satisfying $\cI u\ge 0$ in $\Rd$ and $\sup_{\Rd} u < \infty$, we have $u\le 0$ in $\Rd$.
\end{definition}

It is well-known  that the eigenvalues $\lambda^\prime_1(\cI)$ and
$\lambda^{\prime\prime}_1(\cI)$ are closely connected to
the maximum principle for the operator $\cI$ \cite{Berestycki-15,BR-20}.
We refer to the following two sets of conditions on the coefficients of $\cI$
in \cref{T2.1} below.
\begin{equation}\label{E_growth}
\sup_{\Rd}\, c(x)\,<\,\infty\,,\quad \sup_{\Rd}\, \abs{a_{ij}(x)}\,<\,\infty\,
\quad \text{and\ } 
\sup_\Rd\,\frac{b(x)\cdot x}{|x|}\,<\,\infty\,,
\end{equation}
and
\begin{equation}\label{P_growth}
\sup_{\Rd} \, c(x)\,<\,\infty\,,\quad \limsup_{\abs{x}\to\infty}
\,\frac{\abs{a_{ij}(x)}}{\abs{x}^2}\,<\,\infty\,\quad\text{and\ } 
\limsup_{\abs{x}\to\infty}\,\frac{b(x)\cdot x}{|x|^2}\,<\,\infty\,.
\end{equation}

Our next result provides a necessary and a sufficient condition for maximum principle.

\begin{theorem}\label{T2.1}
The following hold.
\begin{itemize}
\item[(i)] If $\cI$ satisfies the maximum principle then $\lambda^{\prime}_1(\cI)\ge 0$.
\item[(ii)] Suppose that  for some ball  $\sB_\circ$ we have 
$\supp\bigl(\nu(x,\cdot\,)\bigr)\subset\sB_\circ$ for all $x$, $\nu(x)$ is bounded,
and either \cref{E_growth} or \cref{P_growth} hold.
 Then $\cI$ satisfies the maximum principle if $\lambda^{\prime\prime}_1(\cI)>0$.
\item[(iii)] Suppose $a, b$ and $(c-\nu)^-$ are bounded.
Then $\cI$ satisfies the maximum principle if
$\lambda^{\prime\prime}_1(\cI)>0$.
\end{itemize}
\end{theorem}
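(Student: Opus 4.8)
\emph{Part \ttup{i}} I would dispatch directly from the definitions. If $\cI$ satisfied the maximum principle but $\lambda^{\prime}_1(\cI)<0$, then by definition there would exist $\lambda<0$ and a positive $\phi\in\Sobl^{2,d}(\Rd)\cap\Lp^\infty(\Rd)$ with $\cI\phi+\lambda\phi\ge0$, hence $\cI\phi\ge-\lambda\phi>0$ in $\Rd$; but then $\phi$ is bounded above, $\cI\phi\ge0$ and $\phi\not\le0$, contradicting the maximum principle. So $\lambda^{\prime}_1(\cI)\ge0$.

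\emph{Parts \ttup{ii} and \ttup{iii}} I would prove by a comparison argument in the spirit of \cite{Berestycki-15}, carrying the nonlocal term along as a lower order perturbation. Fix $\lambda\in(0,\lambda^{\prime\prime}_1(\cI))$ and pick $\phi\in\Sobl^{2,d}(\Rd)$ with $m\df\inf_{\Rd}\phi>0$ and $\cI\phi+\lambda\phi\le0$, so $\cI\phi/\phi\le-\lambda$ a.e. Let $u\in\Sobl^{2,d}(\Rd)$ with $\cI u\ge0$ and $\sup_{\Rd}u<\infty$; the goal is $u\le0$. Suppose not, so $u(x_1)>0$ for some $x_1$. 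The first move is to pass to $v\df u/\phi$, which is bounded above (since $\phi\ge m$) and, by a direct computation conjugating $\cI$ by $\phi$, satisfies $\cL v\ge0$ in $\Rd$, where
\begin{equation*}
\cL w\,=\,\trace\bigl(a\grad^2 w\bigr)+\hat b\cdot\grad w+\hat c\,w+\hat I[w,\cdot]\,,
\end{equation*}
with $\hat b=b+\tfrac{2}{\phi}\,a\grad\phi$, $\hat c=\cI\phi/\phi\le-\lambda$, and $\hat I[w,x]=\phi(x)^{-1}\int_{\Rd}\phi(x+z)\bigl(w(x+z)-w(x)\bigr)\nu(x,\D z)$. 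When $w\ge0$ one has the pointwise bound $\hat I[w,x]\ge-C_\nu(x)\,w(x)$, where $C_\nu(x)\df\phi(x)^{-1}\int\phi(x+z)\,\nu(x,\D z)\ge0$ is locally bounded. The benefit of the conjugation is that $\cL$ has zeroth order coefficient that is negative and bounded away from $0$.

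The heart of the proof, and the step I expect to be the main obstacle, is to construct an auxiliary function $\eta\in\Sobl^{2,p}(\Rd)$, $p>d$, with $\eta>0$, $\eta(x)\to+\infty$ as $\abs{x}\to\infty$, and $\cL\eta\le0$ in $B_{R_0}^c$ for some $R_0>0$. Under \cref{E_growth} I would take $\eta$ of exponential type, e.g.\ $\eta(x)=\exp\!\bigl(\beta\sqrt{1+\abs{x}^2}\bigr)$ with $\beta$ small: because $\grad\eta$ is radial, boundedness of $a$ and of $b\cdot x/\abs x$ from above controls $\trace(a\grad^2\eta)$ and $b\cdot\grad\eta$; a gradient estimate $\abs{\grad\phi}/\phi\le C(1+\abs{x}^{\alpha})$ for $\phi$ (cf.\ \cite[Lemma~4.1]{ABBK-19}) controls the extra drift term $\tfrac2\phi(a\grad\phi)\cdot\grad\eta$; the uniform bounded support of $\nu$ and $\sup_{\Rd}\nu<\infty$ bound $\hat I[\eta,\cdot]$ by a constant multiple of $\eta$ that is made small by shrinking $\beta$; and combining with $\hat c\le-\lambda$ yields $\cL\eta\le-\tfrac\lambda2\eta<0$ off a ball. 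Under \cref{P_growth}, where the coefficients are allowed to grow, I would instead use an $\eta$ of the slower growth matched to the quadratic bounds (a power $(1+\abs x^2)^{\beta}$ or a stretched exponential), with the same scheme. For \ttup{iii}, where $\nu$ need not have uniformly bounded support, one uses boundedness of $a,b$ and of $(c-\nu)^-$ to build $\eta$ from a Lyapunov function of the local operator $\sA$, again as in \cite{Berestycki-15}, and absorbs the nonlocal term via $\hat I[\eta,\cdot]\ge-C_\nu(\cdot)\eta$. This construction is where the three hypotheses genuinely enter, and where one must control the interplay between the (possibly fast) growth of $\phi$ and the nonlocal integration.

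Granting $\eta$, the argument concludes as follows. Put $\eta_\varepsilon\df1+\varepsilon\eta$, so $\eta_\varepsilon\ge1$ and $\eta_\varepsilon\to+\infty$. Since $v$ is bounded above and $v(x_1)>0$, the number $\gamma_\varepsilon\df\sup_{\Rd}(v/\eta_\varepsilon)$ is positive; and since $v/\eta_\varepsilon\to0$ at infinity wherever $v\ge0$ (and is negative where $v<0$), this supremum is attained at some $\bar x_\varepsilon\in\Rd$. Set $z_\varepsilon\df\gamma_\varepsilon\eta_\varepsilon-v\ge0$, so $z_\varepsilon(\bar x_\varepsilon)=0$. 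Using $\cL v\ge0$, $\cL 1=\hat c\le-\lambda$, $\hat I[1,\cdot]\equiv0$, $\cL\eta\le0$ in $B_{R_0}^c$ and $\cL\eta$ bounded above on $B_{R_0}$, one gets, for $\varepsilon$ small enough, $\cL z_\varepsilon<0$ in all of $\Rd$ (on $B_{R_0}$ the negative term $\gamma_\varepsilon\hat c\le-\gamma_\varepsilon\lambda$ absorbs the bounded term $\varepsilon\gamma_\varepsilon\cL\eta$). Rewriting $\cL z_\varepsilon<0$ as the local inequality
\begin{equation*}
\trace\bigl(a\grad^2 z_\varepsilon\bigr)+\hat b\cdot\grad z_\varepsilon+\bigl(\hat c-C_\nu\bigr)z_\varepsilon\,<\,0\quad\text{in }\Rd\,,
\end{equation*}
whose zeroth order coefficient $\hat c-C_\nu\le-\lambda$ is nonpositive, and using that $z_\varepsilon\ge0$ vanishes at the interior point $\bar x_\varepsilon$, the strong maximum principle forces $z_\varepsilon\equiv0$ on $\Rd$, contradicting $\cL z_\varepsilon<0$. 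Hence $v\le0$, i.e.\ $u\le0$, and $\cI$ satisfies the maximum principle; the same reasoning with the appropriate choice of $\eta$ gives both \ttup{ii} and \ttup{iii}.
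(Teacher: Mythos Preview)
Your part \ttup{i} is correct and matches the paper. The difficulty is in \ttup{ii} and \ttup{iii}: the conjugation $v=u/\phi$ introduces two quantities you cannot control, namely the extra drift $\tfrac{2}{\phi}a\grad\phi$ in $\hat b$ and the ratio $\phi(x+z)/\phi(x)$ hidden in $\hat I$. The function $\phi$ comes from the definition of $\lambda''_1(\cI)$; it is merely a $\Sobl^{2,d}$ \emph{supersolution} with $\inf_{\Rd}\phi>0$, and nothing in \cref{E_growth}, \cref{P_growth} or the hypotheses of \ttup{iii} yields a bound on $\abs{\grad\phi}/\phi$. Your appeal to \cite[Lemma~4.1]{ABBK-19} is misplaced: in this paper that estimate is invoked only in \cref{T1.4}, for an \emph{eigenfunction} of the \emph{local} operator $\sA$, under the separate polynomial growth hypotheses there, none of which are assumed here. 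Without such a bound, $\hat b$ need not satisfy any growth condition and $\hat I[\eta,\cdot]$ need not be dominated by $\eta$, so the construction of $\eta$ with $\cL\eta\le 0$ outside a ball breaks down. In your treatment of \ttup{iii} there is an additional slip: to obtain $\cL\eta\le 0$ you would need an \emph{upper} bound on $\hat I[\eta,\cdot]$, but the inequality $\hat I[\eta,\cdot]\ge -C_\nu\eta$ you quote goes the wrong way.

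The paper sidesteps these obstructions by never conjugating. For \ttup{ii} it works with $\cI$ itself: one checks directly that $\cI\chi\le C_1\chi$ for the same exponential or power barrier $\chi$ (this uses only the raw coefficients $a,b,c,\nu$), sets $\psi_n=\psi+\tfrac1n\chi$, $\kappa_n=\sup_{\Rd}(u/\psi_n)$, and proves the telescoping bound $\tfrac{\chi(x_n)}{n}\le 2\bigl(\tfrac1{\kappa_n}-\tfrac1{\kappa_{2n}}\bigr)\psi(x_n)$ at the touching point $x_n$; since $\{\kappa_n\}$ converges, for large $n$ the $\chi$-contribution is absorbed by $-\lambda\psi$ \emph{locally near $x_n$}, and the strong maximum principle then gives the contradiction. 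No information about $\grad\psi$ is ever used. For \ttup{iii} the paper uses an entirely different, shorter argument: with $\kappa^{-1}=\sup_{\Rd}(u/\psi)$ and $\Phi=\psi-\kappa u\ge 0$, one has the purely local inequality $\widetilde\cA\Phi\le\cI\Phi\le-\lambda$ in $\Rd$, where $\widetilde\cA$ has bounded coefficients precisely because $a,b,(c-\nu)^-$ are bounded; the strong maximum principle gives $\Phi>0$, and then \cite[Lemma~2.1(i)]{BHR-07} forces $\inf_{\Rd}\Phi>0$, contradicting $\inf_{\Rd}\Phi=0$.
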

\begin{proof}
(i)\, Let $\lambda^{\prime}_1(\cI)< 0$.
Then there exist a positive $\phi\in\Sobl^{2,d}(\Rd)\cap L^\infty(\Rd)$ satisfying
$\cI \phi \ge 0$ in $\Rd$. This clearly contradicts the maximum principle.
Thus if $\cI$ satisfies the maximum principle,
 it must be the case that $\lambda^{\prime}_1(\cI)\ge 0$.

(ii)\, Suppose, on the contrary, that $\lambda^{\prime\prime}_1(\cI)>0$,
 but  $\cI$ does not satisfy the maximum principle.
 Then there exists a $u\in\Sobl^{2,d}(\Rd)$ which is positive somewhere in $\Rd$
 and $\cI u \ge 0$ in $\Rd$ with $\sup_{\Rd} u < \infty$.
 Also, since $\lambda^{\prime\prime}_1(\cI)>0$, there exist $\lambda>0$
 and $\psi\in\Sobl^{2,d}(\Rd)$ satisfying  $\cI \psi+\lambda\psi\le 0$ in $\Rd$
 and $\inf_{\Rd} \psi >0$. 
By scaling appropriately we may also assume that $\psi\ge u$ in $\Rd$.
Now choose a smooth positive function $\chi:\Rd\rightarrow \mathbb{R}$
such that if $x\in B_{1}^c$, then
\begin{equation*}
\chi(x)\,=\,
\begin{cases}
\E^{\sigma\abs{x}} & \text{if~} \cI \text{~satisfies~} \cref{E_growth},
\\[5pt]
\abs{x}^\sigma & \text{if~} \cI \text{~satisfies~} \cref{P_growth}.
\end{cases}
\end{equation*}
After an easy computation we can write for $x\in B_1^c$ that
\begin{align}
\cI \chi(x)&\,\le\,
\biggl(\sigma \bigl(\sigma + \tfrac{d-1}{\abs{x}}\bigr) \norm{a(x)}
+ \sigma \frac{b(x)\cdot x}{|x|} + c(x)-\nu(x)+\int_{\sB_\circ}
\frac{\E^{\sigma\abs{x+z}}}{\E^{\sigma\abs{x}}}\nu(x,{\rm dz})\biggr)\chi(x)\,,
\label{PT2.1Aa}\\
\intertext{if $\cI$ satisfies \cref{E_growth}, and}
\cI \chi(x)&\,\le\,
\biggl((\sigma^2+d\sigma-2\sigma)\frac{\norm{a(x)}}{|x|^2}+\sigma\frac{b(x)\cdot x}
{|x|^2}+c(x)-\nu(x)+\int_{\sB_\circ} \frac{\abs{x+z}^\sigma}{\abs{x}^\sigma}
\nu(x,\D{z})\biggr)\chi(x)\,,\label{PT2.1Ab}
\end{align}
if $\cI$ satisfies \cref{P_growth}.
Now observe that for $x\in B_1^c$ there exist a positive constant $C$ such that 
\begin{equation}\label{PT2.1B}
\frac{\abs{x+z}^\sigma}{\abs{x}^\sigma} \,\le\, C(1+\abs{z}^\sigma)\,,
\quad \text{and}\quad \frac{\E^{\sigma\abs{x+z}}}{\E^{\sigma\abs{x}}}
\,\le\, \E^{\sigma\abs{z}}\,.
\end{equation}
Next, using \cref{PT2.1Aa,PT2.1Ab,PT2.1B}, and the fact that
$\supp\bigl(\nu(x,\cdot\,)\bigr)\subset\sB_\circ$,
 we deduce that for some a positive constant $C_1$
we have
\begin{equation}\label{ET2.1A}
\cI\chi(x) \le\, C_1\chi(x)\qquad \forall\,x\in B_{1}^c\,.
\end{equation}
Now set $\psi_n=\psi+\frac{1}{n}\chi$ and define $\kappa_n=\sup_{\Rd}\frac{u}{\psi_n}$.
Note that $\kappa_n>0$
for all $n\in\NN$ since $u$ is positive at some point.
Using the fact that
$\psi\ge u$ and the definition of $\kappa_n$ we can write $\kappa_n\le 1$,
and $\kappa_n\le\kappa_{n+1}$ for all $n\ge 1$. Moreover, since
$\sup_{\Rd} u < \infty$, we have
\begin{equation*}
\limsup_{|x|\rightarrow\infty}\,\frac{u(x)}{\psi_n(x)}\le\,0\,.
\end{equation*} 
Hence the supremum $\sup_\Rd \frac{u}{\psi_n}$
is attained, and there exists $x_n\in\Rd$ such that
$\kappa_n=\frac{u(x_n)}{\psi_n(x_n)}$.
Next, we estimate the term $\frac{\chi(x_n)}{n}$. Note that
\begin{equation*}
\frac{1}{\kappa_{2n}}\,\le\,\frac{\psi_{2n}(x_n)}{u(x_n)}
\,=\,\frac{1}{\kappa_n}-\frac{\chi(x_n)}{2n\,u(x_n)}\,,
\end{equation*}
which implies that
\begin{equation*}
\frac{\chi(x_n)}{n}\,\le\, 2\bigg(\frac{1}{\kappa_{n}}-\frac{1}{\kappa_{2n}}\bigg)u(x_n)
\,\le\, 2\bigg(\frac{1}{\kappa_{n}}-\frac{1}{\kappa_{2n}}\bigg)\psi(x_n)\,.
\end{equation*}
Hence, by continuity, for each $n\in\NN$ there exists a positive $\eta_n$ such that 
\begin{equation}\label{ET2.1B}
\frac{\chi(x)}{n} \,\le\, \bigg(\frac{1}{\kappa_{n}}-\frac{1}{\kappa_{2n}}\bigg)\psi(x)
\quad \text{in~} B_{\eta_n}(x_n)\,.
\end{equation}
On the other hand, using the linearity of $\cI$
together with \cref{ET2.1A,ET2.1B}, we obtain
\begin{equation*}
\cI\psi_n \,=\, \cI\psi+\frac{1}{n}\cI \chi 
\,\le\,  \biggl[-\lambda + C_1 \biggl(\frac{1}{\kappa_{n}}
-\frac{1}{\kappa_{2n}}\biggr) \biggr]\psi(x)\quad\forall\,x\in B_{\eta_n}(x_n)\,.
\end{equation*}
Since $\{\kappa_{n}\}_{n\in\NN}$ is a convergent sequence,
we can choose $n_\circ$ large enough so that 
\begin{equation}\label{ET2.1C}
\cI\psi_{n_\circ}\,<\,0\quad \text{in~} B_{\eta_{n_\circ}}(x_{n_\circ})\,.
\end{equation}
Note that for the nonnegative function $w=\kappa_{n_\circ}\psi_{n_\circ}-u$,
the following holds 
\begin{equation*}
\cI w \,=\, \kappa_{n_\circ} \cI \psi_{n_\circ} - \cI u \,<\, 0\quad
\text{in~} B_{\eta_{n_\circ}}(x_{n_\circ})\,,
\end{equation*}
which also implies that,
\begin{equation*}
\sA w \,\le\, \cI w < 0\quad \text{ in~} B_{\eta_{n_\circ}}(x_{n_\circ})\,.
\end{equation*} 
But $w(x_{n_\circ})=0$, and the strong maximum principle infers that $w\equiv 0$
in $B_{\eta_{n_\circ}}(x_{n_\circ})$.
Hence $\int_{\Rd} w(x+z)\nu(x,\D{z})=0$ in $B_{\eta_{n_\circ}}(x_{n_\circ})$.
Using these facts and \cref{ET2.1C}, we get 
\begin{equation*}
\cI u \,=\, \kappa_{n_\circ} \cI \psi_{n_\circ} < 0
\quad\text{in~} B_{\eta_{n_\circ}}(x_{n_\circ})\,,
\end{equation*}
which clearly contradicts the fact $\cI u\ge 0$ in $\Rd$.
Therefore we must have $u\le 0$.
 
(iii)\, As before, we consider $u\in\Sobl^{2,d}(\Rd)$ satisfying
\begin{equation*}
\cI u\ge 0 \quad \text{in~} \Rd\,,\quad \sup_{\Rd} u\,<\, \infty\,,
\end{equation*}
and since $\lambda^{\prime\prime}_1(\cI)>0$, we choose $\lambda>0$ and
$\psi\in\Sobl^{2,d}(\Rd)$ satisfying
$\cI \psi+\lambda\psi\le 0$ in $\Rd$ and $\inf_{\Rd} \psi >0$.
Scaling suitably, we may assume
that $\inf_{\Rd} \psi\ge 1$ which in turn, implies that
\begin{equation}\label{ET2.1D}
\cI \psi \,\le\, -\lambda \quad \text{in~} \Rd\,.
\end{equation}
If $\sup_{\Rd} u>0$, then $\kappa\df
\bigl(\sup_{\Rd}\frac{u}{\psi}\bigr)^{-1}\in (0, \infty)$.
Using \cref{ET2.1D}, we then
get
\begin{equation*}
-\lambda \,\ge\, \cI \Phi \,\ge\, \widetilde\cA\Phi \quad \text{in~} \Rd\,,
\end{equation*}
where $\Phi\df\psi-\kappa u$, and 
\begin{equation*}
\widetilde\cA \,\df\, \sum_{i, j} a_{ij}\partial_{x_ix_j}
+ \sum_{i} b_i(x)\partial_{x_i} - (c(x)-\nu(x))^{-}\,.
\end{equation*}
By the strong maximum principle, we have $\Phi>0$ in $\Rd$.
Then, applying \cite[Lemma~2.1(i)]{BHR-07}, it follows
that $\inf_{\Rd}\Phi>0$ which contradicts the fact that $\inf_{\Rd}\Phi=0$.
Thus $\kappa^{-1}=0$ and 
$\sup_{\Rd} u\le 0$. This completes the proof.
\end{proof}

As an application of \cref{T2.1},  we establish a relation between
$\lambda^\prime_1$ and  $\lambda^{\prime\prime}_1$ which extends the
result in \cite[Theorem~1.7]{Berestycki-15}.

\begin{theorem}\label{T2.2}
Suppose that $\lim_{n\to\infty} \lambda(\cI,B_n) =\lambda_1(\cI)$.
Then we have  $\lambda_1(\cI)\ge \lambda^\prime_1(\cI)$. 
Also, under the hypotheses of \cref{T2.1}\,\ttup{ii} or \ttup{iii},
we have $\lambda^\prime_1(\cI)\ge \lambda^{\prime\prime}_1(\cI)$.
\end{theorem}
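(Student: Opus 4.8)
The plan is to establish the two inequalities separately: the second will be a short consequence of \cref{T2.1}, while the first rests on the exhaustion scheme already used in the proof of \cref{T1.3}. For $\lambda_1(\cI)\ge\lambda^\prime_1(\cI)$ it suffices to produce a \emph{bounded} positive $\psi\in\Sobl^{2,p}(\Rd)$, $p>d$, solving $\cI\psi+\lambda_1(\cI)\psi=0$ in $\Rd$: such a $\psi$ is admissible in the definition of $\lambda^\prime_1(\cI)$ (the required inequality $\cI\psi+\lambda_1(\cI)\psi\ge0$ holds with equality), so $\lambda^\prime_1(\cI)\le\lambda_1(\cI)$.

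To construct $\psi$, let $\psi_n$ be the principal Dirichlet eigenfunction of $\cI$ in $B_n$, as in \cref{ET1.3A}, but now normalized so that $\sup_{\Rd}\psi_n=1$ (possible since $\psi_n\in\cC(\Rd)$ has compact support). Rewriting the equation as $\sA\psi_n=-\sJ_n-\lambda(\cI,B_n)\psi_n$ with $\sJ_n(x)=\int_{\Rd}\psi_n(x+z)\,\nu(x,\D z)\le\nu(x)$, standard interior elliptic estimates together with \cref{A1.1} give, for each $R>0$, a bound $\sup_n\norm{\psi_n}_{\Sob^{2,p}(B_R)}\le C_R$ depending only on $R$ --- this uses the local boundedness of $\nu$ and of the coefficients, and the fact that $\abs{\lambda(\cI,B_n)}$ is bounded because $\lambda_1(\cI)>-\infty$. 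A diagonal extraction yields a subsequence converging in $\Sobl^{2,p}(\Rd)$ and in $C^1_{\mathrm{loc}}(\Rd)$ to a nonnegative $\psi$ with $\norm{\psi}_{\Lp^\infty(\Rd)}\le1$; passing to the limit in the equation, and using $\lim_n\lambda(\cI,B_n)=\lambda_1(\cI)$, \cref{A1.1}\,(iii) (which localizes the nonlocal integral), and dominated convergence, gives $\cI\psi+\lambda_1(\cI)\psi=0$ in $\Rd$. Since then $(\sA+\lambda_1(\cI))\psi=-\int_{\Rd}\psi(\cdot+z)\,\nu(\cdot,\D z)\le0$, the strong maximum principle forces $\psi$ to be either $\equiv0$ or strictly positive. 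The decisive point --- and the main obstacle --- is to rule out $\psi\equiv0$, i.e.\ to show that the normalized eigenfunctions do not concentrate at infinity as $n\to\infty$; since \cref{T1.3} is not invoked here (no Harnack inequality is assumed), this non-degeneracy must be extracted from the hypothesis $\lim_n\lambda(\cI,B_n)=\lambda_1(\cI)$ itself, e.g.\ by evaluating the eigenvalue equation at a maximum point of $\psi_n$ and invoking $\lambda_1(\cI)>-\infty$, or by comparing $\psi_n$ with $\psi_m$ on $B_m$ for $m<n$.

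For $\lambda^\prime_1(\cI)\ge\lambda^{\prime\prime}_1(\cI)$, I would argue by contradiction: suppose not and fix $\lambda$ with $\lambda^\prime_1(\cI)<\lambda<\lambda^{\prime\prime}_1(\cI)$. Because the set of $\mu$ for which some positive $\phi\in\Sobl^{2,d}(\Rd)\cap\Lp^\infty(\Rd)$ satisfies $\cI\phi+\mu\phi\ge0$ is closed upward with infimum $\lambda^\prime_1(\cI)$, there is such a $\phi$ for $\mu=\lambda$; that is, the operator $\cI_\lambda$ defined by $\cI_\lambda f\df\cI f+\lambda f$ admits a positive, bounded $\phi$ with $\cI_\lambda\phi\ge0$. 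On the other hand, adding the constant $\lambda$ to the coefficient $c$ preserves \cref{E_growth}, \cref{P_growth}, the boundedness of $a$ and $b$, and the boundedness of $(c-\nu)^-$, so $\cI_\lambda$ still satisfies the hypotheses of \cref{T2.1}\,\ttup{ii} (resp.\ \ttup{iii}); moreover $\lambda^{\prime\prime}_1(\cI_\lambda)=\lambda^{\prime\prime}_1(\cI)-\lambda>0$. Hence \cref{T2.1} gives that $\cI_\lambda$ satisfies the maximum principle on $\Rd$, which, applied to $u=\phi$, yields $\phi\le0$ --- contradicting $\phi>0$. Therefore $\lambda^\prime_1(\cI)\ge\lambda^{\prime\prime}_1(\cI)$, and the genuine difficulty of the proof lies entirely in the non-degeneracy step of the first part.
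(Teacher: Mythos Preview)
Your argument for the second inequality $\lambda^\prime_1(\cI)\ge\lambda^{\prime\prime}_1(\cI)$ is correct and essentially identical to the paper's: both shift the operator by $\lambda$, note that $\lambda^{\prime\prime}_1(\cI+\lambda)=\lambda^{\prime\prime}_1(\cI)-\lambda>0$, invoke \cref{T2.1}, and derive a contradiction.

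For the first inequality, however, your proposal has a genuine gap at precisely the point you identify as ``the main obstacle'': you do not rule out $\psi\equiv0$, and the two devices you suggest do not work under the stated hypotheses. Evaluating the equation at a maximum point $x_n$ of $\psi_n$ yields (since $\nabla^2\psi_n(x_n)\le0$ and $I[\psi_n,x_n]\le0$) only $c(x_n)\ge-\lambda(\cI,B_n)$, which gives no control on $x_n$ when $c$ is merely locally bounded; and without a Harnack inequality there is no useful comparison between $\psi_n$ and $\psi_m$. The hypothesis $\lim_n\lambda(\cI,B_n)=\lambda_1(\cI)$ alone does not prevent the normalized $\psi_n$ from drifting to infinity, so your scheme --- passing to the limit in the Dirichlet eigenfunctions normalized by their sup --- cannot be completed as written.

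The paper avoids this difficulty by a different route. It fixes any $\lambda>\lambda_1(\cI)$, shifts so that $\lambda=0$, and uses the hypothesis to find a \emph{bounded} ball $\sB$ with $\lambda(\cI,\sB)<0$. The Dirichlet eigenfunction $\varphi_{\sB}$ on that fixed ball (suitably normalized) serves as a subsolution, and the constant $1$ as a supersolution, of the semilinear problem $\cI f=c^{+}f^{2}$ on $\Rd$; a monotone iteration then produces a positive solution $f$ with $\varphi_{\sB}\le f\le1$, hence bounded, and $\cI f=c^{+}f^{2}\ge0$ shows $\lambda^\prime_1(\cI)\le0=\lambda$. The point is that the positivity is guaranteed from below by $\varphi_{\sB}$ on a \emph{fixed} compact set, so no limiting procedure at infinity --- and hence no Harnack inequality or non-degeneracy argument --- is needed.
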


\begin{proof}
The proof follows from the same argument as in \cite{Berestycki-15}.
We show that for any $\lambda>\lambda_1$ we have $\lambda\ge \lambda^\prime_1$.
Replacing $c$ by $c-\lambda$ we may
assume that $\lambda=0$. Since $\lim_{n\to\infty} \lambda(\cI, B_n) =\lambda_1$,
we can find a ball $\sB$ large enough such that the corresponding Dirichlet
eigenvalue $\lambda(\cI,\sB)$ is negative.
Let $\varphi_\sB$ be the corresponding principal eigenfunction.
We normalize the eigenfunction so that it satisfies
\begin{equation*}
\norm{\varphi_\sB}_{L^\infty(\sB)} \,=\,
\min\,\biggl\{1,\, \frac{-\lambda(\cI,\sB)}{\norm{c}_{L^\infty(\sB)}}\biggr\}\,.
\end{equation*}
Then for the equation $\cI f= c^{+} f^2$ we see that $\Bar u=1$ is a bounded
super-solution, and $\varphi_\sB$ is a sub-solution in $\Rd$ and also lies below $1$.
Now we can apply the monotone iteration
method (since the comparison principle holds for above operator)
to construct a positive solution which is bounded above by $1$.
This shows that $\lambda^\prime_1\le 0$. Hence the proof.

Next, we prove the second part of the theorem.
Suppose, on the contrary, that there exists $\lambda>0$ satisfying
$\lambda^{\prime\prime}_1(\cI)>\lambda>\lambda^{\prime}_1(\cI)$.
So from the definition of $\lambda^{\prime}_1(\cI)$ there exists a positive
$\phi\in\Sobl^{2,d}(\Rd)\cap L^\infty(\Rd)$ that satisfies
$\cI \phi+\lambda \phi\ge 0$ in $\Rd$.
But note that
$\lambda^{\prime\prime}_1(\cI + \lambda)=\lambda^{\prime\prime}_1(\cI)-\lambda>0$,
which means that
$\cI + \lambda$ satisfies the maximum principle in \cref{T2.1},
and this contradicts the existence of such a function $\phi$.
This completes the proof.
\end{proof}

The following observation is used in the sequel.

\begin{lemma}\label{L2.1}
For any domain $D$, it holds that $-\sup_{x\in D}c(x)\le \lambda^{\prime\prime}_1(\cI,D)$.
\end{lemma}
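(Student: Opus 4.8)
The plan is to test the supremum defining $\lambda^{\prime\prime}_1(\cI,D)$ (the definition being the one displayed above with $\Rd$ replaced by $D$) against the constant function $\phi\equiv 1$. The only thing that needs to be noticed is that, because of the standing normalization $\nu(x,\{0\})=0$, the nonlocal part of $\cI$ annihilates constants: $I[1,x]=\int_{\Rd}(1-1)\,\nu(x,\D{z})=0$ for every $x$, so that $\cI 1(x)=c(x)$ pointwise in $\Rd$.

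If $\sup_{x\in D}c(x)=+\infty$ the asserted inequality reads $-\infty\le\lambda^{\prime\prime}_1(\cI,D)$ and there is nothing to prove, so I would assume $\sup_{x\in D}c(x)<\infty$ and set $\lambda\df-\sup_{x\in D}c(x)$. Then $\phi\equiv 1$ lies in $\Sobl^{2,d}(D)$, has $\inf_{D}\phi=1>0$, and
\[
\cI\phi+\lambda\phi \,=\, c(x)-\sup_{y\in D}c(y)\,\le\,0\qquad\text{in } D\,,
\]
so this value of $\lambda$ is admissible in the supremum. Hence $\lambda^{\prime\prime}_1(\cI,D)\ge -\sup_{x\in D}c(x)$, which is the claim. (One could equally well observe that \emph{every} $\lambda<-\sup_{D}c$ is admissible, giving the same bound.)

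There is no real obstacle here; the single point worth flagging is that the argument hinges on $\cI$ carrying no zeroth-order nonlocal term — i.e.\ on the convention $\nu(x,\{0\})=0$ — which is exactly what makes $\cI$ vanish on constants. Specializing to $D=\Rd$ recovers the elementary lower bound $-\sup_{\Rd}c\le\lambda^{\prime\prime}_1(\cI)$ for the eigenvalue introduced at the start of this section.
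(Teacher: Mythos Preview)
Your proof is correct and is essentially the paper's own argument: both take the test function $\phi\equiv 1$, use $\cI 1=c$, and conclude directly (the paper phrases it as a contradiction, you argue directly, but this is cosmetic). One small quibble: your closing remark that the computation ``hinges on the convention $\nu(x,\{0\})=0$'' is unnecessary, since $I[1,x]=\int_{\Rd}(1-1)\,\nu(x,\D z)=0$ for \emph{any} measure $\nu$---the integrand vanishes identically, so no normalization is needed.
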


\begin{proof}
Suppose that this is not true.
Then, there exists $\lambda$ such that
$-\sup_{D}c>\lambda > \lambda^{\prime\prime}_1(\cI,D)$.
Now consider $\psi\equiv 1$.
We deduce that $\cI(1)+\lambda \le \sup_{D} c+ \lambda<0$.
Hence, the definition of $\lambda^{\prime\prime}_1(\cI,D)$ implies
that $\lambda^{\prime\prime}_1(\cI,D)\ge \lambda$, and this contradicts the
original hypothesis.
\end{proof}

We also need the following existence result in the exterior domain.

\begin{lemma}\label{L2.2}
Let $\sK$ be a compact domain in $\Rd$ with smooth boundary.
Let  $\Bar{v},\underline{v}\in\Sobl^{2,d}(\sK^c)\cap\cC(\Rd)$,
with $\underline{v}\le \Bar{v}$ in $\Rd$, satisfy
$\cI \Bar{v}\le f(x, \Bar{v})$ in $\sK^c$
for some function $f:\Rd\times\RR\to\RR$, 
and $\underline{v}\in\cC^2(\Rd)$ is a subsolution of $\cI u=f$ in $\sK^c$.
Assume that $f(x, \cdot)$ is locally Lipschitz, locally uniformly in $x$,
and $f$ is locally bounded.
Then there exists a solution $u\in\Sobl^{2,p}(\sK^c)\cap \cC(\Rd)$, $p>d$,
of $\cI u=f$ in $\sK^c$ satisfying  
$\underline{v}\le u \le \Bar{v}$.
\end{lemma}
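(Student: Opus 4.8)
The plan is to solve the exterior-domain problem by a combination of domain exhaustion and monotone iteration, using the existence theory for bounded domains recorded in the appendix together with the comparison principle available for the ordered pair $\underline v\le\Bar v$. First I would exhaust $\sK^c$ by an increasing sequence of bounded smooth domains $D_n\df B_n\setminus\sK$ (for $n$ large enough that $\sK\Subset B_n$), with $\bigcup_n D_n=\sK^c$. On each $D_n$ one sets up the Dirichlet problem $\cI u=f(x,u)$ with boundary data $u=\Bar v$ on $\partial D_n$ (equivalently $u=\Bar v$ on $B_n^c\cup\sK$, reading $\cI$ with its nonlocal term over the full space). Because $f(x,\cdot)$ is locally Lipschitz locally uniformly in $x$, one can, on the bounded set $D_n$, add a large multiple $\mu_n u$ to both sides so that $v\mapsto f(x,v)+\mu_n v$ is nondecreasing on the relevant range $[\min_{\bar D_n}\underline v,\max_{\bar D_n}\Bar v]$; this is the standard device that makes the monotone iteration scheme well posed. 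The restrictions $\Bar v|_{\bar D_n}$ and $\underline v|_{\bar D_n}$ are then an ordered super/subsolution pair for the modified equation with the chosen boundary data, and the existence of principal eigenfunctions / solvability in bounded domains from \cref{Appen} (the linear solvability used to iterate, as in \cref{TA3}) lets one run the iteration to produce a solution $u_n\in\Sobl^{2,p}(D_n)\cap\cC(\bar D_n)$, $p>d$, with $\underline v\le u_n\le\Bar v$ on $\bar D_n$.

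Next I would pass to the limit $n\to\infty$. The uniform two-sided bound $\underline v\le u_n\le\Bar v$ gives, on any fixed compact $K\subset\sK^c$, a uniform $L^\infty$ bound on $u_n$; since $f$ is locally bounded, the nonlocal term $\sJ_n(x)=\int_\Rd u_n(x+z)\,\nu(x,\D z)$ is locally bounded as well (using \cref{A1.1}\,(iii), the locally compact support of $\nu$, together with the uniform bound on $u_n$ on a slightly enlarged compact set). Writing the equation in the form $\sA u_n=f(x,u_n)-\sJ_n(x)$ and invoking interior $L^p$ elliptic estimates, the sequence $\{u_n\}$ is bounded in $\Sob^{2,p}(K')$ for every compact $K'\Subset\sK^c$ and every $p>d$; a diagonal argument then extracts a subsequence converging in $\Sobl^{2,p}(\sK^c)$, weakly, and locally uniformly (by Sobolev embedding, since $p>d$), to some $u$ with $\underline v\le u\le\Bar v$ in $\Rd$ (away from $\sK^c$ we just keep $u=\Bar v=\underline v$ forced on the boundary by continuity, or rather we only claim membership in $\cC(\Rd)$ via the continuous matching with the barriers near $\partial\sK$). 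Local uniform convergence of $u_n$ lets one pass to the limit in the nonlocal integral by dominated convergence (the integrand is bounded by $\sup|\Bar v|\vee\sup|\underline v|$ on the relevant compact set, and $\nu(x,\cdot)$ is a finite measure), and continuity of $f(x,\cdot)$ handles the right-hand side, yielding $\cI u=f$ in $\sK^c$ in the strong $\Sobl^{2,p}$ sense.

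The main obstacle I expect is twofold and both parts are localized near $\partial\sK$: first, making the monotone iteration genuinely legitimate requires that the comparison/strong maximum principle hold for the \emph{nonlocal} operator $\cI$ on the bounded domains $D_n$ with the stated boundary conditions — here one must be careful that the Lipschitz bound on $f$ is only local, so the shift $\mu_n$ must be chosen after first confining all iterates to the order interval $[\underline v,\Bar v]$, which is exactly what the ordered pair guarantees provided the comparison principle is applied at each step; second, one must verify that the limit function $u$ is continuous up to $\partial\sK$ and matches the barriers there, i.e. that the solution lies in $\cC(\Rd)$ as claimed. For the latter I would use that $\underline v,\Bar v\in\cC(\Rd)$ are squeezed together only where they are equal, but in general they need not agree on $\partial\sK$, so the correct reading is that the Dirichlet data on $\partial\sK$ is inherited from, say, $\Bar v$ (or any continuous function between $\underline v$ and $\Bar v$ on $\partial\sK$); barrier arguments at the smooth boundary $\partial\sK$, using local barriers for $\sA$ and the boundedness of $\sJ_n$, then give the continuity up to $\partial\sK$ uniformly in $n$, hence for the limit. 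Apart from these boundary-regularity points, every step is a routine combination of monotone iteration, interior elliptic estimates, and dominated convergence in the nonlocal term.
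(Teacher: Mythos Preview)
Your approach is essentially the same as the paper's: exhaust $\sK^c$ by smooth bounded domains, solve each bounded-domain Dirichlet problem by monotone iteration between $\underline v$ and $\Bar v$ (shifting $f$ by a large linear term so the right-hand side is monotone on the order interval, and invoking the linear solvability of \cref{TA3} together with the comparison principle \cref{CA1} at each step), then pass to the limit via interior $\Sob^{2,p}$ estimates on $\sA u_n = f(x,u_n)-\sJ_n$ and dominated convergence for the nonlocal term. The only cosmetic differences are that the paper takes exterior data $\underline v$ rather than $\Bar v$ (after first reducing to $\underline v=0$ by subtraction), and disposes of the uniform continuity up to $\partial\sK$ by citing \cite[Lemma~6.1]{AB19} instead of the barrier argument you sketch.
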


\begin{proof}
 Let $\{\Omega_n\, :\,  n\in\NN\}$ be an increasing sequence of bounded domains with smooth boundary
 which cover $\sK^c$.
  Consider a
 solution $u_n\in\Sobl^{2,p}(\Omega_n)\cap\cC(\Rd)$ satisfying
\begin{equation}\label{EL2.2A}
\begin{split}
\cI u_n &\,=\, f(x,u_n) \quad \text{in~} \Omega_n\,,\\
u_n&\,=\,\underline{v} \quad \text{in~} \Omega_n^c\,.
\end{split}
\end{equation}
Existence of $u_n$ follows from a monotone iteration argument.
For instance, replacing $f$ by $f-\cI\underline{v}$ we may assume
that $\underline{v}=0$ and $\Bar{v}\ge 0$.
Choose $\theta$ large enough so that
$\lambda_{\Omega_n}(\cI-\theta)>0$, $\theta\ge\norm{c}_{L^{\infty}(\Omega_n)}$, and 
\begin{equation*}
\babs{f(x, s_1)-f(x, s_2)} \,\le\, \theta \abs{s_1-s_2}\quad\forall\,x\in\Omega_n\,,
\end{equation*}
with $s_1,s_2\in\bigl[\inf_{\Omega_n}\underline{v},\sup_{\Omega_n}\underline{v}\bigr]$.
Now, we let $\xi_1=0=\underline{v}$, and applying \cite[Theorem~6.3]{AB19},
we define the sequence
$\{\xi_{i}\}_{i\in\NN}\subset\Sobl^{2,p}(\Omega_n)\cap\cC(\Rd)$ 
via the recursion
\begin{equation*}
\cI\xi_{i+1}-\theta\xi_{i+1} \,=\, f(x,\xi_i)-\theta\xi_i
\quad \text{in~} \Omega_n\,,\quad \xi_{i+1}\,=\,0\quad \text{in~}\Omega^c_n\,.
\end{equation*}
Applying \cref{CA1} 
we obtain that $0\le \xi_1\le\xi_2\le \dotsb\le \Bar{v}$ in $\Omega_n$.
Then using \cite[Lemma~6.1]{AB19},
and  a standard elliptic regularity estimate,
 we pass to the limit $\xi_i\to u_n$ as $i\to\infty$, to get
a solution of \cref{EL2.2A}. We also have $\underline{v}\le u_n\le \Bar{v}$.

Defining
\begin{equation*}
\sJ_n(x)=\int_{\Rd} u_n(x+y)\nu(x,\D{y})\,,
\end{equation*}
we rewrite \cref{EL2.2A} as
\begin{equation*}
\sA u_n \,=\, -\sJ_n(x)+ f(x, u_n)\quad \text{in~} \Omega_n\,.
\end{equation*}
Since $\underline{v}\le u_n\le \Bar{v}$, it follows that $\sJ_n(x)$ and $f(x, u_n)$ are
locally uniformly bounded, and thus
$\{u_n\}$ is  bounded in $\Sobl^{2,p}(\sK^c)$  for $p>d$.
The behaviour of $u_n$ near the 
boundary $\partial\sK$ can also be uniformly controlled (cf. \cite[Lemma~6.1]{AB19}).
Thus we can extract 
a subsequence of $u_n$ converging to a function
$u\in\Sobl^{2,p}(\sK^c)\cap\cC(\Rd)$. Finally passing the limit in 
\cref{EL2.2A} we have desired result.
\end{proof}

The theorem which follows is used later to establish equality of eigenvalues.
This result
should be compared with \cite[Theorem~7.6]{Berestycki-15}.

\begin{theorem}\label{T2.3}
It holds that 
\begin{equation*}
\lambda^{\prime\prime}_1(\cI) \,=\,
\min\,\Bigl\{\lambda_1(\cI),\lim_{r\rightarrow\infty}
\lambda^{\prime\prime}(\cI,\Bar{B}_{r}^c)\Bigr\}\,,
\end{equation*}
where
\begin{equation}\label{ET2.3A}
\begin{aligned}
\lambda^{\prime\prime}(\cI, D) &\,\df\, \sup\,\Bigl\{\lambda\in\RR\,\colon
\exists\, \phi\in\Sobl^{2,d}(\Rd)\cap\cC(\Rd)  \\[5pt]
&\mspace{220mu} \mathrm{~satisfying~}
\cI \phi+\lambda \phi \,\le\, 0
\mathrm{~in~}D
 \mathrm{~and~} \inf_{\Rd}\phi>0 \Bigr\}\,.
\end{aligned}
\end{equation}
\end{theorem}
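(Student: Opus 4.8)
The plan is to establish the two inequalities in the claimed identity separately.

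The inequality $\lambda^{\prime\prime}_1(\cI)\le\min\{\lambda_1(\cI),\lim_{r\to\infty}\lambda^{\prime\prime}(\cI,\Bar B_r^c)\}$ is routine. If $\lambda$ is admissible in the definition of $\lambda^{\prime\prime}_1(\cI)$, witnessed by $\phi\in\Sobl^{2,d}(\Rd)$ with $\inf_\Rd\phi>0$ and $\cI\phi+\lambda\phi\le0$ in $\Rd$, then $\phi$ is positive and hence admissible in \cref{E1.3}, so $\lambda\le\lambda_1(\cI)$; moreover $\Sobl^{2,d}(\Rd)\subset\cC(\Rd)$ and the differential inequality holds a fortiori in each $\Bar B_r^c$, so $\phi$ is admissible in \cref{ET2.3A} for $D=\Bar B_r^c$ and $\lambda\le\lambda^{\prime\prime}(\cI,\Bar B_r^c)$. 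Since $r\mapsto\lambda^{\prime\prime}(\cI,\Bar B_r^c)$ is nondecreasing (enlarging $r$ shrinks the domain on which the inequality is imposed), its limit equals its supremum and dominates $\lambda$. Taking the supremum over admissible $\lambda$ gives the inequality.

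For the reverse inequality I would fix $\lambda$ strictly below the minimum and produce $\phi\in\Sobl^{2,d}(\Rd)$ with $\inf_\Rd\phi>0$ and $\cI\phi+\lambda\phi\le0$ in $\Rd$; this gives $\lambda\le\lambda^{\prime\prime}_1(\cI)$, and letting $\lambda$ increase finishes the proof. From $\lambda<\lambda_1(\cI)$ and \cref{E1.3} I obtain a positive $\psi\in\Sobl^{2,d}(\Rd)$ and $\epsilon_0>0$ with $\cI\psi+\lambda\psi\le-\epsilon_0\psi$ in $\Rd$: a strict global supersolution of $\cI+\lambda$, which however need not stay bounded away from zero at infinity. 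Since $\lambda(\cI,B_n)\ge\lambda_1(\cI)>\lambda$ for all $n$, the operator $\cI+\lambda$ has positive Dirichlet principal eigenvalue on every ball, hence obeys the comparison principle there (cf.\ \cref{Appen}). From $\lambda<\lim_r\lambda^{\prime\prime}(\cI,\Bar B_r^c)$ I fix $r_0$ with $\lambda<\lambda^{\prime\prime}(\cI,\Bar B_{r_0}^c)$ and, from \cref{ET2.3A}, get $w\in\Sobl^{2,d}(\Rd)\cap\cC(\Rd)$ with $\inf_\Rd w>0$ and $\cI w+\lambda w\le-\delta<0$ in $\Bar B_{r_0}^c$ for some $\delta>0$: a bounded-below supersolution, but only in the exterior. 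A routine regularization (replacing $w$ on an annulus by the solution of the corresponding coercive Dirichlet problem and invoking interior elliptic estimates, with the comparison principle keeping the modification $\le w$ and bounded below) lets me further assume $w$ is locally $C^1$ on $\Bar B_{r_1}^c$ for some $r_1>r_0$.

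The heart of the proof is to splice $w$ (bounded below, but a supersolution only outside $\Bar B_{r_0}$) with $M\psi$ for a large constant $M$ (a supersolution everywhere, bounded below on compacta) into one global bounded-below supersolution. I would take a smooth cut-off $\eta\colon\Rd\to[0,1]$ with $\eta\equiv0$ on $\Bar B_{r_1}$ and $\eta\equiv1$ on $\Bar B_R^c$ for a large $R>r_1$, and set $\phi\df\eta\,w+(1-\eta)\,M\psi$. Then $\phi\in\Sobl^{2,d}(\Rd)\cap\cC(\Rd)$, and a short estimate gives $\inf_\Rd\phi\ge\tfrac12\min\{\inf_\Rd w,\ M\min_{\Bar B_R}\psi\}>0$. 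On $\Bar B_R^c$ one has $\phi=w$ and $\cI\phi+\lambda\phi\le-\delta$; on $\Bar B_{r_1}$ one has $\phi=M\psi$ and $\cI\phi+\lambda\phi\le-M\epsilon_0\psi$; and on the collar $B_R\setminus\Bar B_{r_1}\subset\Bar B_{r_0}^c$, where both $w$ and $M\psi$ are strict supersolutions, linearity together with the product rule for the second-order and nonlocal parts gives $\cI\phi+\lambda\phi=\eta(\cI w+\lambda w)+(1-\eta)M(\cI\psi+\lambda\psi)+E$, where the ``cross'' error $E$ involves only first derivatives of $w,\psi$ (bounded on the collar, by the regularization), derivatives of $\eta$, and increments of $\eta$, so that $\abs E\le C(\norm{\grad\eta}_\infty+\norm{\grad^2\eta}_\infty)(1+M)$ there. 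Choosing first $M$ large and then $R-r_1$ large makes the main term beat $E$ pointwise on the collar, whence $\cI\phi+\lambda\phi\le0$ in all of $\Rd$; this $\phi$ is the required competitor.

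The main obstacle — and the reason this is not the local-theory cut-and-paste of \cite{Berestycki-15} — is the nonlocal term: $\cI\phi(x)$ at a point near the core depends on the values of $\phi$ far away, so a naive gluing of the exterior supersolution with a core correction destroys the supersolution inequality, and one is forced into the cut-off splice with its ``large $M$, wide collar'' balancing so that the glued function remains a supersolution on the entire collar. The comparison principle for $\cI+\lambda$ on balls (available because $\lambda<\lambda_1(\cI)$) and the exterior regularization of $w$ (available because $\lambda<\lim_r\lambda^{\prime\prime}(\cI,\Bar B_r^c)$) are precisely what makes the error term $E$ manageable, while $\inf_\Rd\phi>0$ is secured by carrying $w$ outward to infinity and $M\psi$ inward over the compact core.
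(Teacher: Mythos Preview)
Your easy inequality is fine, and the idea of splicing the exterior bounded-below supersolution with the global $\psi$ is right in spirit, but your specific splice $\phi=\eta\,w+(1-\eta)M\psi$ breaks down on the exterior region $\Bar B_R^c$. You assert that there $\phi=w$ and hence $\cI\phi+\lambda\phi\le-\delta$, but this ignores the nonlocal term: for $x\in\Bar B_R^c$ with $x+z\in B_R$ (which \cref{A1.1}\,(iii) does \emph{not} rule out for large $\abs{x}$) one has $\phi(x+z)\ne w(x+z)$, and a direct computation gives
\[
\cI\phi(x)+\lambda\phi(x)\,=\,\cI w(x)+\lambda w(x)
+\int_{\Rd}\bigl(1-\eta(x+z)\bigr)\bigl(M\psi(x+z)-w(x+z)\bigr)\,\nu(x,\D z)\,.
\]
Since $1-\eta\ge0$ and, for your large $M$, $M\psi-w>0$ on the compact set where $1-\eta>0$, this extra integral is \emph{positive}; you only have the fixed $-\delta$ from $w$ to absorb it, and neither enlarging $M$ nor widening the collar helps. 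A concrete failure under \cref{A1.1} is $\nu(x,\cdot)=\delta_{-x}$: the error at every $x\in\Bar B_R^c$ then equals $M\psi(0)-w(0)$, which is large. You correctly flag the nonlocal leakage issue in your last paragraph, but only treat it on the collar, not in the exterior.

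The paper avoids this by using the \emph{additive} splice $u=\psi+\varepsilon\chi\phi$ with \emph{small} $\varepsilon$ (here $\chi=0$ on a core ball, $\chi=1$ outside). For $x$ in the exterior the nonlocal cross-term is $\varepsilon\int(\chi(x+z)-1)\phi(x+z)\,\nu(x,\D z)$, which is automatically $\le0$ because $\chi\le1$ and $\phi\ge0$; the exterior estimate is free, with no assumption on how far $\nu(x,\cdot)$ reaches. On the core and collar the cross-terms are supported on a fixed compact set and carry the factor $\varepsilon$, so $-\varepsilon_1\psi$ (bounded below there) absorbs them for $\varepsilon$ small. The lower bound $\inf_\Rd u>0$ comes from $\phi\ge1$ at infinity and $\psi>0$ on the compact core; to secure $\phi\ge1$ with $\Sob^{2,p}$ regularity the paper runs a sub/supersolution argument on the exterior (\cref{L2.2}) rather than an annular Dirichlet ``regularization''. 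The structural point you are missing: the cutoff must multiply the \emph{exterior} piece, so that nonlocal leakage at infinity picks up the harmless factor $\chi-1\le0$.
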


\begin{proof}
By the monotonicity property of $r\mapsto\lambda^{\prime\prime}(\cI,\Bar{B}_{r}^c)$, we have 
\begin{equation*}
\lambda^{\prime\prime}_1(\cI)\,\le\, \lim_{r\rightarrow\infty}\,
\lambda^{\prime\prime}(\cI,\Bar{B}_{r}^c)\,.
\end{equation*}
Using this fact and  the definition of the eigenvalues we can immediately write 
\begin{equation*}
\lambda^{\prime\prime}_1(\cI)\,\le\,\min\Bigl\{\lambda_1(\cI),
\lim_{r\rightarrow\infty}\lambda^{\prime\prime}(\cI,\Bar{B}_{r}^c)\Bigr\}\,.
\end{equation*}
To establish the reverse inequality, we show that for any
$\lambda <\min\bigl\{\lambda_1(\cI),\,\lim_{r\rightarrow\infty}
\lambda^{\prime\prime}(\cI,\Bar{B}_{r}^c)\bigr\}$
we also have $\lambda^{\prime\prime}_1(\cI)\ge \lambda$.
By hypothesis, we can find a positive number $R$ such that
$\lambda<\lambda^{\prime\prime}(\cI,\Bar{B}_{R}^c)$.
Thus there exists
$\eta\in\Sobl^{2,d}(\Bar{B}_{R}^c)\cap\cC(\Rd)$ satisfying
$\cI\eta+\lambda\eta\le 0$ in $\Bar{B}_{R}^c$ and $\inf_{\Rd}\eta>0$. 
After multiplying by a suitable constant we may assume that
$\inf_{\Rd}\eta\ge  2$.
Choose a Lipschitz continuous
function $f:\RR\to(-\infty, 0]$
satisfying $f(1)=-1$, $-1\leq f(t)\leq 0$ for $t\in [1,2]$ and $f(t)=0$ for $t\ge 2$.
Then $\Bar{u}=\eta$ is a  supersolution to 
\begin{equation*}
\cI u + \lambda u\,=\, \abs{c(x)+\lambda}\,f\bigl(u(x)\bigr)
\quad \text{in~} \Bar{B}_{R}^c\,,
\end{equation*}
and $\underline{u}=1$ is a subsolution.
By \cref{L2.2}, there exists 
$\phi\in\Sobl^{2,p}(\Bar{B}_{R}^c)\cap\cC(\Rd)$, $p>d$, 
satisfying $\inf_{\Rd}\phi\ge 1$ and
\begin{equation*}
\cI\phi+\lambda\phi \,\le\, 0 \quad \text{in\ } \Bar{B}_{R}^c\,.
\end{equation*} 

By Morrey's inequality we can see that $\phi\in\cC^1(\Bar{B}_{R+1}^c)$.
Since $\lambda < \lambda_1(\cI)$, there exist $\varepsilon_1>0$ and a positive function
$\psi\in\Sobl^{2,d}(\Rd)$ satisfying
\begin{equation*}
\cI\psi + (\lambda+\varepsilon_1)\psi\,\le\, 0\quad \text{in~} \Rd\,.
\end{equation*}
We choose a nonnegative function $\chi\in \cC^2(\Rd)$, taking values in $[0, 1]$,
such that $\chi=0$ in $B_{R+1}$ and $\chi=1$ in $B^c_{R+2}$.
Let $u\df\psi+\varepsilon\chi\phi$, with $\epsilon$ a positive constant
to be chosen later.
Recall from \cref{A1.1}\,(iii) that there exists a compact set $K_1$ such that 
$\supp\bigl(\nu(x,\cdot\,)\bigr)\subset K_1$ for all $x\in B_{R+2}$.

We continue with some estimates of $u$ on a  partition  of $\Rd$.
For $x\in B_{R+1}$ we have 
\begin{equation}\label{ET2.3B}
\cI u + \lambda u  \,\le\, -\varepsilon_1\psi+\varepsilon\int_{K_1}\chi(x+z)\phi(x+z)
\nu(x,{\rm d}z)\,\le\, -\varepsilon_1\psi+\varepsilon\kappa_1\,,
\end{equation}
where $\kappa_1$ depends upon the bounds of $\phi$ in the compact set $B_{R+1}+K_1$
and the measure of $K_1$.
If we consider the annular region $B_{R+2}\setminus B_{R+1}$,
then for all $x\in B_{R+2}\setminus B_{R+1}$, it holds that 
\begin{equation}\label{ET2.3C}
\begin{aligned}
\cI u + \lambda u & \,\le\, -\varepsilon_1\psi+\varepsilon\big[\cI(\chi\phi)
+\lambda\chi\phi\big]\\ 
& \,\le\,  -\varepsilon_1\psi+ \varepsilon\chi(\cI+\lambda)\phi+\varepsilon
\bigl[2a_{ij}\partial_{i}\chi\partial_{j}\phi+(a_{ij}\partial_{ij}\chi
+b_{i}\partial_{i}\chi)\phi\bigr] \\
&\mspace{100mu}
+\varepsilon\int_{\Rd}\big(\chi(x+z)-\chi(x)\big)\phi(x+z)\nu(x,{\rm d}z)\\
& \,\le\, -\varepsilon_1\psi + \varepsilon \kappa_2 + \varepsilon\int_{K_1}
\bigl(\chi(x+z)-\chi(x)\bigr)\phi(x+z)\nu(x,{\rm d}z)\\
&\,\le\, -\varepsilon_1\psi + \kappa_3\varepsilon\,,
\end{aligned}
\end{equation}
where $\kappa_2$ and $\kappa_3$ are some constants depending on the
bounds of the coefficients in the set 
$B_{R+2}\setminus B_{R+1}$.
Now choosing $\varepsilon$ small enough,
we can make the right-hand side of \cref{ET2.3B} and \cref{ET2.3C} non-positive.
Finally, when $x\in B_{R+2}^c$, we have 
\begin{equation*}
\cI u + \lambda u  \,\le\, -\varepsilon_1\psi+\varepsilon(\cI\phi+\lambda\phi)
+\varepsilon\int_{\Rd}\bigl(\chi(x+z)-1\bigr)\phi(x+z)\nu(x,{\rm d}z)\,\le\, 0\,.
\end{equation*}
Combining all the above cases, we deduce that $\cI u + \lambda u  \le 0$ in $\Rd$.
Hence, from the definition of $\lambda^{\prime\prime}_1(\cI)$, it is evident that
$\lambda^{\prime\prime}_1(\cI)\ge \lambda$, which completes the proof.
\end{proof}

\begin{remark}
Suppose $\sup_{D} c<\infty$.
Then the admissible test functions in the definition \cref{ET2.3A} can be restricted to
the class satisfying $\inf_D\phi>0$. That is, 
\begin{equation*}
\lambda^{\prime\prime}(\cI, D) \,=\,
\sup\,\Bigl\{\lambda\in\RR\,\colon \exists\,  \phi\in\Sobl^{2,d}(D)\cap\cC(\Rd),
\text{~satisfying~} \cI \phi + \lambda \phi \,\le\, 0 \text{~in~} D
\text{~and~} \inf_{D}\phi>0 \Bigr\}\,.
\end{equation*}
To see this consider any $\phi\in\Sobl^{2,d}(D)\cap\cC(\Rd)$ with $\inf_{D}\phi>0$ and 
\begin{equation*}\cI \phi + \lambda \phi \,\le\, 0\; \text{in~} D\,.\end{equation*}
For some $\varepsilon>0$, define $\psi_\varepsilon=\phi+\varepsilon$ and note that
for $x\in D$,
\begin{equation*}
\cI \psi_\varepsilon + \lambda \psi_\varepsilon
 \,\le\,  (c(x)+\lambda) \varepsilon \,\le\,\varepsilon
 \Bigl(\lambda+\sup_{D}\,c(x)\Bigr)^+\Bigl(\inf_{D}\,\phi\Bigr)^{-1}\psi_\varepsilon\,.
\end{equation*}
Since $\varepsilon$ is arbitrary, we get \cref{ET2.3A}.
\end{remark}

Our next result is an extension of \cite[Proposition~1.11]{Berestycki-15}.
It shows that if the potential $c$ is negative at infinity, then the principal
eigenvalue $\lambda_1(\cI)$ characterizes the validity of the maximum principle.

\begin{theorem}
Suppose that $\zeta\df\displaystyle\limsup_{\abs{x}\to\infty}\, c(x)<0$. Then
the following hold:
\begin{itemize}
 \item[(i)]
Under the hypotheses in \cref{T2.1}\,\ttup{ii} or \ttup{iii},
the maximum principle holds if $\lambda_1(\cI)>0$.
 \item[(ii)]
 If $\lim_{n\to\infty} \lambda(\cI,B_n) =\lambda_1(\cI)$ and
the maximum principle holds, then $\lambda_1(\cI)>0$.
 \end{itemize}
\end{theorem}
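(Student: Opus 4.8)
The plan is to derive both implications from the machinery already in place, using the hypothesis $\zeta<0$ to pass between $\lambda_1(\cI)$ and the ``double-prime'' eigenvalues. For part~(i), assume $\lambda_1(\cI)>0$. The key point is that when $c$ is negative at infinity, the eigenvalue $\lambda^{\prime\prime}_1(\cI)$ can be bounded below using \cref{L2.1} on an exterior domain: choosing $r$ large enough that $\sup_{\Bar B_r^c}c \le \tfrac{\zeta}{2}<0$, \cref{L2.1} gives $\lambda^{\prime\prime}(\cI,\Bar B_r^c)\ge -\sup_{\Bar B_r^c}c \ge -\tfrac{\zeta}{2}>0$, hence $\lim_{r\to\infty}\lambda^{\prime\prime}(\cI,\Bar B_r^c)>0$. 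By \cref{T2.3}, $\lambda^{\prime\prime}_1(\cI)=\min\{\lambda_1(\cI),\lim_{r\to\infty}\lambda^{\prime\prime}(\cI,\Bar B_r^c)\}$, and since both terms are strictly positive we get $\lambda^{\prime\prime}_1(\cI)>0$. Now \cref{T2.1}\,(ii) or (iii), whichever hypothesis is in force, yields the maximum principle directly.

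For part~(ii), assume $\lim_{n\to\infty}\lambda(\cI,B_n)=\lambda_1(\cI)$ and that the maximum principle holds; we must show $\lambda_1(\cI)>0$. First, \cref{T2.1}\,(i) gives $\lambda^{\prime}_1(\cI)\ge 0$. To upgrade this to strict positivity of $\lambda_1(\cI)$, I would run an argument by contradiction: suppose $\lambda_1(\cI)\le 0$. Since $\zeta<0$, pick $\varepsilon\in(0,-\zeta)$ with $\varepsilon$ also small relative to quantities appearing below, and consider the shifted operator $\cI+\varepsilon$ (i.e.\ replace $c$ by $c+\varepsilon$). Then $\limsup_{|x|\to\infty}(c(x)+\varepsilon)=\zeta+\varepsilon<0$ still holds, so part~(i) applies to $\cI+\varepsilon$ provided $\lambda_1(\cI+\varepsilon)=\lambda_1(\cI)+\varepsilon>0$; but this last inequality is exactly what we are trying to establish, so that route is circular. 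Instead I would use \cref{T2.2}: the hypothesis $\lim_n\lambda(\cI,B_n)=\lambda_1(\cI)$ gives $\lambda_1(\cI)\ge\lambda^{\prime}_1(\cI)$, and then it suffices to show $\lambda^{\prime}_1(\cI)>0$. Suppose not, so $\lambda^{\prime}_1(\cI)\le 0$; then, as in the proof of \cref{T2.1}\,(i), there is a positive bounded $\phi\in\Sobl^{2,d}(\Rd)\cap L^\infty(\Rd)$ with $\cI\phi\ge -\delta\phi\ge 0$ after absorbing a small shift, and more precisely for every $\lambda>\lambda^{\prime}_1(\cI)$ such a $\phi$ exists with $\cI\phi+\lambda\phi\ge 0$. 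Taking $\lambda\in(\lambda^{\prime}_1(\cI),0)$ — possible since $\lambda^{\prime}_1(\cI)\le 0$, after noting by \cref{L2.1} that $\lambda^{\prime\prime}_1\ge-\sup_\Rd c$ may fail to be useful globally, so one works on the exterior — one finds that $\cI\phi\ge -\lambda\phi>0$ somewhere. Combined with boundedness of $\phi$ this contradicts the maximum principle, forcing $\lambda^{\prime}_1(\cI)>0$ and hence $\lambda_1(\cI)>0$.

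More carefully, to make part~(ii) airtight I would argue: by the maximum principle and \cref{T2.1}\,(i), $\lambda^{\prime}_1(\cI)\ge 0$; it remains to rule out $\lambda^{\prime}_1(\cI)=0$ and $\lambda_1(\cI)=0$ simultaneously. If $\lambda_1(\cI)=0$, then by the reasoning in part~(i) applied with the shift $\varepsilon>0$ chosen so small that $\zeta+\varepsilon<0$, the operator $\cI+\varepsilon$ would have $\lambda^{\prime\prime}_1(\cI+\varepsilon)=\min\{\varepsilon,\,\lim_r\lambda^{\prime\prime}(\cI+\varepsilon,\Bar B_r^c)\}>0$ by \cref{T2.3} and \cref{L2.1}; but this is not quite a contradiction on its own. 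The clean contradiction comes instead from \cref{T2.2}'s second assertion: under the hypotheses of \cref{T2.1}\,(ii) or (iii) we have $\lambda^{\prime}_1(\cI)\ge\lambda^{\prime\prime}_1(\cI)$, while the exterior estimate above gives $\lim_r\lambda^{\prime\prime}(\cI,\Bar B_r^c)>0$; if also $\lambda_1(\cI)\le 0$, then $\lambda^{\prime\prime}_1(\cI)=\min\{\lambda_1(\cI),\lim_r\lambda^{\prime\prime}(\cI,\Bar B_r^c)\}=\lambda_1(\cI)\le 0$, which is consistent, so one does not immediately contradict. The decisive step is therefore the direct one: from $\lim_n\lambda(\cI,B_n)=\lambda_1(\cI)$ and \cref{T2.2}, $\lambda_1(\cI)\ge\lambda^{\prime}_1(\cI)\ge 0$; suppose $\lambda_1(\cI)=0$. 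Pick any $\lambda\in(0,-\zeta)$ small; then $\lambda>\lambda_1(\cI)$, so by definition of $\lambda_1$ there is no positive supersolution of $\cI\phi+\lambda\phi\le 0$, but on the other hand, choosing $r$ large so $\sup_{\Bar B_r^c}c<-\lambda$, the constant $\phi\equiv 1$ satisfies $\cI(1)+\lambda=c+\lambda<0$ in $\Bar B_r^c$; gluing this exterior supersolution to an interior one via the construction in the proof of \cref{T2.3} produces a global positive $\phi$ with $\cI\phi+\lambda\phi\le 0$ in $\Rd$, contradicting $\lambda>\lambda_1(\cI)$. Hence $\lambda_1(\cI)>0$.

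\textbf{Main obstacle.} The delicate point is part~(ii): turning the nonstrict bound $\lambda^{\prime}_1(\cI)\ge 0$ from the abstract maximum principle into the strict inequality $\lambda_1(\cI)>0$. This requires genuinely using $\zeta<0$ to build a global positive supersolution for a slightly supercritical $\lambda$, essentially re-running the cut-and-paste argument of \cref{T2.3} with the exterior supersolution taken to be the constant function (valid precisely because $c<-\lambda$ near infinity). Verifying that the nonlocal cross terms in the gluing region remain controlled — which is routine given $\nu$ has locally compact support and the estimates \cref{ET2.3B,ET2.3C} — is the one computation I would need to check, but it parallels \cref{T2.3} almost verbatim.
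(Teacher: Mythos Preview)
Your argument for part~(i) is correct and matches the paper's: use \cref{L2.1} on an exterior ball to get $\lim_{r\to\infty}\lambda^{\prime\prime}(\cI,\Bar B_r^c)>0$, invoke \cref{T2.3} to conclude $\lambda^{\prime\prime}_1(\cI)>0$, and then apply \cref{T2.1}\,(ii) or (iii).

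For part~(ii), however, there is a genuine gap. After correctly reducing to the case $\lambda_1(\cI)=0$, your decisive step is to pick $\lambda\in(0,-\zeta)$ and construct a global positive $\phi\in\Sobl^{2,d}(\Rd)$ with $\cI\phi+\lambda\phi\le 0$ by ``gluing'' the exterior supersolution $\phi\equiv 1$ to an interior one ``via the construction in the proof of \cref{T2.3}''. But the construction in \cref{T2.3} does not glue an interior supersolution to an exterior one: it takes a \emph{global} positive supersolution $\psi$ (obtained precisely from the assumption $\lambda<\lambda_1(\cI)$) and adds a small multiple of a cut-off exterior piece to enforce $\inf_{\Rd}>0$. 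In your situation $\lambda>\lambda_1(\cI)=0$, so by the very definition of $\lambda_1(\cI)$ no such global $\psi$ exists, and the backbone of the \cref{T2.3} gluing is missing. You never specify what the ``interior'' piece is, and any candidate (say the Dirichlet eigenfunction on a large ball, or a supersolution from \cref{ETA2A}) is only a supersolution on a bounded set; patching two merely local supersolutions across an annulus with the nonlocal operator $\cI$ is a different computation from \cref{T2.3} and, under only \cref{A1.1}, the nonlocal cross terms coming from far away are not controlled.

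The paper's route avoids this entirely. Assuming $\lambda_1(\cI)=0$, it takes $V\equiv 1$ and observes that $\cI V = c(x) < \zeta/2$ outside a compact set, so $V$ is exactly the barrier of \cref{R1.4}. Then the argument in the second half of the proof of \cref{T1.4} (scale the Dirichlet eigenfunctions $\psi_n$ so that $\rho_n\psi_n$ touches $V$ from below in a fixed ball, use $\rho_n\psi_n\le V=1$ to bound the nonlocal term by $\nu(x)$, and pass to the limit) produces a positive $\varphi\in\Sobl^{2,p}(\Rd)$ with $\cI\varphi=0$ and $\varphi\le 1$. This $\varphi$ is bounded and satisfies $\cI\varphi\ge 0$, directly contradicting the maximum principle. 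The missing idea in your proposal is precisely this: rather than trying to push $\lambda$ above $\lambda_1(\cI)$ and manufacture a forbidden supersolution, use $\zeta<0$ to furnish a bounded barrier and thereby construct a \emph{bounded} principal eigenfunction at level $\lambda_1(\cI)=0$, which is itself the counterexample to the maximum principle.
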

\begin{proof}
(i)\,
Suppose that $\lambda_1(\cI)>0$. Then applying \cref{L2.1,T2.3}
we deduce that $\lambda^{\prime\prime}(\cI)>0$,
which implies by \cref{T2.1} that $\cI$ satisfies the maximum principle. 

(ii)\, If $\cI$ satisfies the maximum principle,
then we have $\lambda^{\prime}(\cI)\ge 0$ by \cref{T2.1}\,(i),
and therefore, using \cref{T2.2} we deduce that $\lambda_1(\cI)\ge 0$.
Suppose that $\lambda_1(\cI)=0$. Taking $V=1$ we see that
$$\cI V = c(x) < \zeta/2\quad \text{outside a compact set.}$$
Hence by \cref{R1.4} and the proof of \cref{T1.4} we can construct a principal
eigenfunction $\varphi$ satisfying $\cI \varphi=0$ and $\varphi\leq V$ in $\Rd$.
This clearly, contradicts the maximum principle. Hence we must have 
$\lambda_1(\cI)>0$.
\end{proof}

Our next result should be compared with \cite[Theorem~1.9]{Berestycki-15}.

\begin{theorem}
$\lambda_1(\cI)=\lambda^{\prime\prime}_1(\cI)$ holds in each of the following cases:
\begin{itemize}
\item[(i)]
$\lambda_1(\cI-\gamma)=\lambda^{\prime\prime}_1(\cI-\gamma)$
for some nonnegative function $\gamma\in L^{\infty}(\Rd)$
satisfying $\underset{\abs{x}\to\infty}{\lim}\, \gamma(x)=0$;
\item[(ii)]
$\lambda_1(\cI)\le -\underset{\abs{x}\to\infty}{\lim\sup}\, c(x)$;
\item[(iii)]
$\norm{a}_{L^\infty(\Rd)}\le\Lambda$, $\lim_{\abs{x}\to\infty}b(x)=0$, and
\begin{equation*}
\forall\, r>0,\, \forall\, \beta<\,\limsup_{\abs{x}\to\infty} c(x),\, \exists\;
B_r(x_0)\; \text{satisfying~} \inf_{B_r(x_0)} (c(x)-\nu(x))>\beta\,;
\end{equation*}
\item[(iv)]
There exists $V\in \cC^2(\Rd)$ which is bounded from below away from $0$, and
satisfies
$\cI V +\lambda_1(\cI) V\le 0$ outside some compact set.
\end{itemize}
\end{theorem}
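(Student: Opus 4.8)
The whole argument is driven by \cref{T2.3}. That result gives
\begin{equation*}
\lambda^{\prime\prime}_1(\cI)\,=\,\min\Bigl\{\lambda_1(\cI),\,\lim_{r\to\infty}\lambda^{\prime\prime}(\cI,\Bar{B}_r^c)\Bigr\}\,,
\end{equation*}
so $\lambda^{\prime\prime}_1(\cI)\le\lambda_1(\cI)$ always, and the asserted identity is equivalent to the single inequality
\begin{equation}\label{Eplan}
\lim_{r\to\infty}\lambda^{\prime\prime}(\cI,\Bar{B}_r^c)\,\ge\,\lambda_1(\cI)\,,
\end{equation}
which I verify in each of the four cases. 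Throughout I use, with no further comment, that writing $\cI f=\sA f+\int_{\Rd}f(x+z)\,\nu(x,\D{z})$ the integral is nonnegative on nonnegative functions, so $\lambda_1(\cI)\le\lambda(\cI,D)\le\lambda(\sA,D)$ for bounded $D$ and $\lambda_1(\cI-\gamma)\ge\lambda_1(\cI)$ for $\gamma\ge 0$.

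Cases \textup{(iv)}, \textup{(ii)} and \textup{(i)} follow by directly producing test functions for \cref{ET2.3A}. For \textup{(iv)}: if $K\subset B_r$, then $V\in\cC^2(\Rd)\subset\Sobl^{2,d}(\Rd)\cap\cC(\Rd)$ is admissible for $\lambda^{\prime\prime}(\cI,\Bar{B}_r^c)$ at level $\lambda_1(\cI)$, since $\cI V+\lambda_1(\cI)V\le 0$ on $\Bar{B}_r^c\subset K^c$ and $\inf_{\Rd}V>0$; hence $\lambda^{\prime\prime}(\cI,\Bar{B}_r^c)\ge\lambda_1(\cI)$, which is \cref{Eplan}. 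For \textup{(ii)}: testing with $\phi\equiv 1$, for which $\cI(1)=c$, shows as in \cref{L2.1} that $\lambda^{\prime\prime}(\cI,\Bar{B}_r^c)\ge-\sup_{\Bar{B}_r^c}c$; letting $r\to\infty$ gives $\lim_{r\to\infty}\lambda^{\prime\prime}(\cI,\Bar{B}_r^c)\ge-\limsup_{\abs{x}\to\infty}c(x)\ge\lambda_1(\cI)$. For \textup{(i)}: with $\gamma$ as in the hypothesis, $\lambda_1(\cI-\gamma)\ge\lambda_1(\cI)$, while \cref{T2.3} applied to $\cI-\gamma$ and the hypothesis give $\lambda_1(\cI-\gamma)=\lambda^{\prime\prime}_1(\cI-\gamma)\le\lim_{r\to\infty}\lambda^{\prime\prime}(\cI-\gamma,\Bar{B}_r^c)$; and if $\phi$ is admissible for $\lambda^{\prime\prime}(\cI-\gamma,\Bar{B}_r^c)$ at level $\lambda$, then for $\varepsilon>0$ picking $r'\ge r$ with $\gamma<\varepsilon$ on $\Bar{B}_{r'}^c$ (possible since $\gamma(x)\to0$) we get, on $\Bar{B}_{r'}^c$,
\begin{equation*}
\cI\phi+(\lambda-\varepsilon)\phi\,=\,\bigl[(\cI-\gamma)\phi+\lambda\phi\bigr]+(\gamma-\varepsilon)\phi\,\le\,(\gamma-\varepsilon)\phi\,\le\,0\,,
\end{equation*}
so $\lambda^{\prime\prime}(\cI,\Bar{B}_{r'}^c)\ge\lambda-\varepsilon$; using the monotonicity of $\rho\mapsto\lambda^{\prime\prime}(\cI,\Bar{B}_\rho^c)$, then taking the supremum over admissible $\lambda$, then $\varepsilon\downarrow0$ and $r\to\infty$, yields $\lim_{r\to\infty}\lambda^{\prime\prime}(\cI,\Bar{B}_r^c)\ge\lim_{r\to\infty}\lambda^{\prime\prime}(\cI-\gamma,\Bar{B}_r^c)\ge\lambda_1(\cI-\gamma)\ge\lambda_1(\cI)$, i.e.\ \cref{Eplan}.

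Case \textup{(iii)} is reduced to case \textup{(ii)} by showing $\lambda_1(\cI)\le-\limsup_{\abs{x}\to\infty}c(x)$. Set $\sA_\circ f\df\trace(a\grad^2 f)+b\cdot\grad f$, and fix $\beta<\limsup_{\abs{x}\to\infty}c(x)$. For each $r$, the oscillation hypothesis (applied with a radius comparable to $r$) furnishes a ball $B_r(z)$ on which $\inf_{B_r(z)}(c-\nu)>\beta$ and whose centre $z$ may be taken with $\abs{z}$ large compared to $r$; since $b(x)\to 0$, it follows that $\sup_{B_r(z)}\abs{b}\le\eta(r)$ with $\eta(r)\to 0$ as $r\to\infty$. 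Let $\psi$ be the Dirichlet principal eigenfunction of $\cI$ in $B_r(z)$ (see \cref{TA1}). Discarding the nonnegative nonlocal term in $\cI\psi+\lambda(\cI,B_r(z))\psi=0$ and using $c-\nu>\beta$ gives $\sA_\circ\psi+\bigl(\beta+\lambda(\cI,B_r(z))\bigr)\psi\le 0$ in $B_r(z)$, so $\lambda(\cI,B_r(z))\le\lambda(\sA_\circ,B_r(z))-\beta$, and hence $\lambda_1(\cI)\le\lambda(\sA_\circ,B_r(z))-\beta$. A standard estimate for the principal eigenvalue of $\sA_\circ$ on a large ball, using $\norm{a}_{L^\infty(\Rd)}\le\Lambda$ and $\sup_{B_r(z)}\abs{b}\le\eta(r)$, gives $\lambda(\sA_\circ,B_r(z))\le\omega(r)$ with $\omega(r)\to 0$; letting $r\to\infty$ yields $\lambda_1(\cI)\le-\beta$, and since $\beta<\limsup_{\abs{x}\to\infty}c(x)$ was arbitrary, $\lambda_1(\cI)\le-\limsup_{\abs{x}\to\infty}c(x)$.

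The main obstacle is the eigenvalue estimate $\lambda(\sA_\circ,B_r(z))\to 0$ invoked in case \textup{(iii)}. When the leading-order operator is uniformly elliptic with bounded drift this is classical, but here $a$ is only \emph{locally} uniformly elliptic, so one should instead argue probabilistically, via a lower bound on the expected exit time of the diffusion generated by $\sA_\circ$ from $B_r(z)$ that depends only on the upper bounds $\norm{a}_{L^\infty}\le\Lambda$ and $\sup_{B_r(z)}\abs{b}\le\eta(r)$; this is precisely why the balls produced by the oscillation hypothesis must be chosen with centres escaping to infinity, so that the drift on them is uniformly small. The remaining three cases require only bookkeeping with the suprema defining the eigenvalues together with \cref{T2.3}.
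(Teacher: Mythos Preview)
Your organization around \cref{T2.3} and the reduction to \cref{Eplan} is exactly the paper's strategy, and your treatments of cases \textup{(i)}, \textup{(ii)}, \textup{(iv)} are correct and coincide with the paper's (indeed you supply for \textup{(i)} the details the paper merely cites from \cite{Berestycki-15}).

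The only substantive difference, and the only genuine gap, is in case \textup{(iii)}. You correctly reduce to \textup{(ii)} by aiming at $\lambda_1(\cI)\le -\limsup_{|x|\to\infty}c(x)$, and your chain $\lambda_1(\cI)\le\lambda(\cI,B_r(z))\le\lambda(\sA_\circ,B_r(z))-\beta$ is valid. But the step $\lambda(\sA_\circ,B_r(z))\to 0$ is not as ``standard'' as you suggest, and the exit-time heuristic does not close it: a lower bound on $\Exp_x[\uptau]$ of the form $r^2/(2d\Lambda+2r\eta(r))$ follows from the submartingale argument you sketch, but there is no general inequality $\lambda(\sA_\circ,D)\le 1/\inf_{x\in D}\Exp_x[\uptau]$ (near $\partial D$ the expected exit time vanishes, so this bound is vacuous), and the correct comparison $\lambda\le 1/\sup_x\Exp_x[\uptau]$ goes the wrong way. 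What one actually needs is a test function positive in $B_r(z)$, vanishing on $\partial B_r(z)$, for which $\sA_\circ\phi+\omega(r)\phi\ge 0$; a quadratic $\phi$ fails because the Hessian is constant while $\phi\to 0$ at the boundary. The paper resolves this by constructing the bump
\[
\phi(x)\,=\,\exp\Bigl(-\tfrac{1}{1-|\varepsilon(x-x_0)|^2}\Bigr)\Ind_{B_{1/\varepsilon}(x_0)}
\]
and computing $\cI\phi-\sigma\phi$ directly: near the boundary the dominant second-derivative term $4\Lambda\varepsilon^2|\varepsilon(x-x_0)|^2\bigl(1-|\varepsilon(x-x_0)|^2\bigr)^{-4}\phi$ is positive and swamps everything, while in the interior region all terms are $O(\varepsilon^2)\phi$ and are beaten by $(c-\nu-\sigma)\phi>0$. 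This yields $\cI\phi-\sigma\phi>0$ on $B_{1/\varepsilon}(x_0)$, hence $\lambda_1(\cI)\le\lambda'(\cI,B_{1/\varepsilon}(x_0))=\lambda(\cI,B_{1/\varepsilon}(x_0))\le-\sigma$ via \cref{TA2}. The same bump function, applied to $\sA_\circ$ alone, would equally well fill your gap; the point is that an explicit construction is needed, and the exit-time route you propose does not supply one.
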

\begin{proof}
Hypotheses (i) and (ii) are exactly the same as in
\cite[Theorem~1.9(ii)-(iii)]{Berestycki-15}
which only uses \cref{T2.3}.

Next we show that (iii) $\Rightarrow$ (ii). Note that it is enough to demonstrate
that 
if $\sigma<\limsup_{|x|\rightarrow\infty}\, c(x)$ then $\lambda_1(\cI)\le -\sigma$.
Consider the function 
\begin{equation*}
\psi(x) \,=\, \exp\left(-\frac{1}{1-|\varepsilon x|^2}\right)
\text{\ on\ } B_{\nicefrac{1}{\varepsilon}}\,, \quad\text{ and\ }\psi(x)=0
\text{\ on\ } B_{\nicefrac{1}{\varepsilon}}^c\,,
\end{equation*}
for an appropriate positive constant $\varepsilon$ to be chosen later.
An easy calculation shows that
\begin{align*}
D_{x_i}\psi &\,=\,  \frac{-2\varepsilon^2x_i}{\bigl(1-|\varepsilon x|^2\bigr)^2}\psi
\intertext{and}
D_{x_ix_j}\psi &\,=\, \Biggl(\frac{4\varepsilon^4}{\bigl(1-|\varepsilon x|^2\bigr)^4}
x_ix_j - \frac{2\varepsilon^2}{\bigl(1-|\varepsilon x|^2\bigr)^2}\delta_{ij}
-\frac{8\varepsilon^4}{\bigl(1-|\varepsilon x|^2\bigr)^3}x_ix_j\Biggr)\psi\,.
\end{align*}

For $x_0\in\Rd$, define $\phi(x)=\psi(x-x_0)$.
Suppose that we show that we can choose $\varepsilon$ and $x_0$ in such a way that
\begin{equation}\label{ET2.5A}
\cI \phi-\sigma\phi>0\quad \text{in~} B_{\nicefrac{1}{\varepsilon}}(x_0).
\end{equation}
Since the principal eigenvalues
$\lambda(\cI,D)$ and $\lambda^{\prime}(\cI,D)$,
the later defined by \cref{ETA2B},
are equal for a bounded domain $D$ (see Theorem~\ref{TA2}),
we see that 
\begin{equation*}
-\sigma \ge\, \lambda^{\prime}\bigl(\cI,B_{\nicefrac{1}{\varepsilon}}(x_0)\bigr) \,=\,
\lambda\bigl(\cI,B_{\nicefrac{1}{\varepsilon}}(x_0)\bigr) \,\ge\, \lambda_1(\cI)
\end{equation*}
by \cref{ET2.5A}.
Thus it remains to establish \cref{ET2.5A}.
From the calculations of $D_{x_i}\psi$ and $D_{x_ix_j}\psi$ we see that
\begin{equation}\label{ET2.5B}
\begin{aligned}
\cI\phi(x)-\sigma\phi(x) 
&  \,\ge\, \biggl(\frac{4\Lambda \varepsilon^2 |\varepsilon (x-x_0)|^2}
{(1-|\varepsilon (x-x_0)|^2)^4}
-\frac{2d\Lambda\varepsilon^2}{(1-|\varepsilon (x-x_0)|^2)^2}
-\frac{8\Lambda\varepsilon^2 |\varepsilon (x-x_0)|^2}
{(1-|\varepsilon (x-x_0)|^2)^3}\\[5pt]
&\mspace{200mu}
 - \frac{2\epsilon^2|x-x_0|b(x)}
 {(1-|\epsilon (x-x_0)|^2)^2}+c(x)-\nu(x)-\sigma\biggr)\phi(x)\,.
\end{aligned}
\end{equation}
Given $\varepsilon>0$ we first choose $R$ large enough such that $|b(x)|\le \varepsilon$
for $|x|\ge R$, and
then choose $x_0\in\Rd$ satisfying $|x_0|\ge R+2\varepsilon^{-1}$.
Furthermore, due to our
hypothesis, we can choose $x_0$ is such a fashion that 
\begin{equation}\label{ET2.5C}
\inf_{B_{\nicefrac{1}{\varepsilon}}(x_0)} (c(x)-\nu(x))\,>\,\sigma\,.
\end{equation}
Next, we estimate \cref{ET2.5B} in two steps.

\noindent{\bf Step 1.}
Suppose that $1-\delta<|\varepsilon(x-x_0)|^2<1$ where $\delta$ is a small
positive number such that
\begin{equation*}
4\Lambda  (1-\delta)-2d\Lambda\delta^2 -8\Lambda(1-\delta)\delta  - 2 \delta^2>0\,.
\end{equation*}
It then follows from \cref{ET2.5B} that
\begin{equation*}
\cI\phi-\sigma\phi 
\,\ge\, \frac{\varepsilon^2 \bigl(4\Lambda  (1-\delta)-2d\Lambda\delta^2
-8\Lambda(1-\delta)\delta- 2 \delta^2\bigr)}
{(1-|\varepsilon (x-x_0)|^2)^4}\,\phi
+ \bigl(c(x)-\nu(x)-\sigma \bigr)\phi\,.
\end{equation*}  
This proves \cref{ET2.5A} in the annulus
$1-\delta<|\varepsilon(x-x_0)|^2<1$.

\noindent{\bf Step 2.}
We are left with the region $0 \le|\varepsilon (x-x_0)|^2\le 1-\delta$,
where $\delta$ is as chosen in Step~1.
Here we deduce that
\begin{equation*}
\cI\phi-\sigma\phi 
\,\ge\, \biggl(\big( c(x)-\nu(x)-\sigma \big) -\frac{2d\Lambda\varepsilon^2}{\delta^2}
-\frac{8\Lambda(1-\delta)\varepsilon^2}{\delta^3}
- \frac{2 \varepsilon^2}{\delta^2}\biggr)\phi\,.
\end{equation*}
Using \cref{ET2.5C}, we can choose $\varepsilon$ small enough to make the
right-hand side of the preceding equation positive.
Combining the above steps we obtain \cref{ET2.5A}, which completes the proof.

For (iv) we again use \cref{T2.3}.
Choose a large $r$ such that $B^c_{r}\subset\sK^c$.
Then 
$\lambda_1(\cI)\le \lambda^{\prime\prime}(\cI,\Bar{B}_{r}^c)$
by the definition.
Now letting $r\rightarrow\infty$ and using Theorem \cref{T2.3},
we deduce that $\lambda^{\prime\prime}_1(\cI)=\lambda_1(\cI)$.
\end{proof}

In the remaining part of this article we discuss the simplicity
of the principal eigenvalue.
As well known from \cite[Proposition~8.1]{Berestycki-15},
$\lambda_1(\cI)$ need not be simple, in general.
The following definition is the extension of \emph{Agmon's minimal growth at infinity}
\cite{Agmon-83} criterion in the nonlocal situation.

\begin{definition}\label{GS}
A positive function $u\in\Sobl^{2,d}(\Rd)$ satisfying $\cI u=0$ in $\Rd$
is said to be a solution of minimal growth at infinity if for any $\rho>0$
and any positive function $v\in\Sobl^{2,d}(B_{\rho}^c)\cap\cC(\Rd)$ satisfying
$\cI v\le 0$ in $B_{\rho}^c$, there exist constants
$R\ge\rho$ and $k>0$ such that $ku\le v$ in $B_{R}^c$.
\end{definition}

\begin{theorem}\label{T2.6}
Let $u\in\Sobl^{2,d}(\Rd)$ be a positive solution of minimal
growth at infinity of the equation $\cI u=0$ in $\Rd$.
Then, for any positive function $v\in\Sobl^{2,d}(\Rd)$ satisfying
$\cI v \le 0$ in $\Rd$, there exists $\kappa>0$ such that $v\equiv \kappa u$ in $\Rd$.
\end{theorem}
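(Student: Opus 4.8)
The plan is a sliding (ground‑state comparison) argument of Agmon type, using only the minimal growth property of $u$, compactness over balls, and the strong maximum principle. First I would introduce the extremal constant
\[
\kappa^* \df \sup\bigl\{\kappa>0 \,\colon\, \kappa u \le v \text{ in } \Rd\bigr\}
\]
and check that $\kappa^* \in (0,\infty)$. Finiteness is clear by evaluating $\kappa u \le v$ at any fixed point. For strict positivity I would apply \cref{GS} to $v$ itself, which is a positive function in $\Sobl^{2,d}(\Rd)\cap\cC(\Rd)$ with $\cI v \le 0$ in $\Rd$: this produces $R\ge 1$ and $k>0$ with $ku\le v$ on $B_R^c$; on the compact set $\Bar{B}_R$ one has $m_0\df\inf_{\Bar{B}_R}v>0$ and $M_0\df\sup_{\Bar{B}_R}u<\infty$, hence $(m_0/M_0)u\le v$ there, and $\kappa_0\df\min\{k,m_0/M_0\}>0$ gives $\kappa_0 u\le v$ on all of $\Rd$. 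Taking the supremum and passing to the pointwise limit, $\kappa^* u\le v$ in $\Rd$.

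Next I would set $w\df v-\kappa^* u\ge 0$ and exploit the supersolution structure. By linearity of $\cI$ and $\cI u=0$, one has $\cI w=\cI v\le 0$ in $\Rd$; writing $\cI=\sA+\sJ$ with $\sJ f(x)\df\int_{\Rd}f(x+z)\,\nu(x,\D z)$ and using $w\ge 0$, $\nu\ge 0$, this gives $\sA w\le -\sJ w\le 0$ in $\Rd$. Since $\Rd$ is connected, the strong maximum principle for $\sA$ (after absorbing its sign‑indefinite zeroth‑order coefficient $c-\nu$ into a nonpositive one on each ball, exactly as done elsewhere in the paper) yields the dichotomy: either $w\equiv 0$ — in which case $v\equiv\kappa^* u$ and the theorem holds with $\kappa=\kappa^*>0$ — or $w>0$ throughout $\Rd$.

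It then remains to rule out the alternative $w>0$. In that case $w$ is itself a positive function in $\Sobl^{2,d}(\Rd)\cap\cC(\Rd)$ satisfying $\cI w\le 0$ in $\Rd$, so the minimal growth property of $u$ (\cref{GS} with $\rho=1$) furnishes $R\ge 1$ and $k>0$ with $ku\le w$ on $B_R^c$, i.e.\ $(\kappa^*+k)u\le v$ on $B_R^c$. On $\Bar{B}_R$, compactness gives $m\df\inf_{\Bar{B}_R}w>0$ and $M\df\sup_{\Bar{B}_R}u<\infty$, so $w\ge(m/M)u$ and $(\kappa^*+m/M)u\le v$ there. Combining the two estimates, $(\kappa^*+\min\{k,m/M\})u\le v$ in all of $\Rd$, which contradicts the maximality of $\kappa^*$. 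Hence $w\equiv 0$ and $v\equiv\kappa^* u$.

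I expect the main obstacle to be purely technical: invoking the strong maximum principle correctly for $\sA$, whose zeroth‑order term has no sign — this is handled in the standard way, using $w\ge 0$ (vanishing at an interior point forces $w\equiv 0$ on balls, and openness/closedness of $\{w=0\}$ in connected $\Rd$ finishes it) — together with the bookkeeping needed to ensure that the comparison functions fed into \cref{GS} are continuous on $\Rd$ and positive on the relevant exterior domains, which is automatic since both $v$ and $v-\kappa^* u$ lie in $\Sobl^{2,d}(\Rd)\cap\cC(\Rd)$.
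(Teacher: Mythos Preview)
Your proof is correct and follows the same sliding argument as the paper: set $\kappa^*=\inf_{\Rd}\frac{v}{u}$, reduce via the strong maximum principle for $\sA$ to the dichotomy $w\equiv 0$ or $w>0$, and in the latter case feed $w=v-\kappa^* u$ into the minimal growth definition to obtain a contradiction. The one genuine difference is in how you pass from the exterior bound $ku\le w$ on $B_R^c$ to a global bound. You use pure compactness: $w>0$ continuous on $\Bar B_R$ forces $\inf_{\Bar B_R}w>0$, hence $w\ge (m/M)u$ there. The paper instead invokes the refined maximum principle in $B_R$ (\cref{CA1}), which requires first observing that $\cI u=0$ gives $\lambda(\cI,B_{R+1})\ge 0$ and then appealing to strict domain monotonicity of the Dirichlet eigenvalue to get $\lambda(\cI,B_R)>0$. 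Your route is more elementary and self-contained (no need for \cref{CA1} or the cited monotonicity result), while the paper's route ties the argument back to the eigenvalue machinery developed in the appendix; both are perfectly valid. Your handling of the sign-indefinite zeroth-order term in $\sA$ via $(c-\nu)^-$ is exactly what the paper does elsewhere, so that step is fine.
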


\begin{proof}
Define the quantity $\kappa=\inf_{\Rd}\frac{v}{u}$. Clearly $v-\kappa u\ge 0$.
First, we claim that  $v-\kappa u>0$ can not be positive in $\Rd$.
Suppose, on the contrary, that  $v-\kappa u>0$ in $\Rd$. Since $\cI(v-\kappa u)\leq 0$ in 
$\Rd$,
by the definition of solution of minimal growth at infinity of $u$,
there exist positive constants $R$ and $\kappa_1$ such that $\kappa_1 u\le v-\kappa u$
in $B_{R}^c$.
Note that $\cI u =0$ in $\Rd$ immediately gives $\lambda(\cI,B_{R+1})\ge 0$, and
the use of \cite[Corollary 2.1]{AB19} implies that $\lambda(\cI,B_{R})> 0$.
Applying \cref{CA1} we deduce that $\kappa_1 u\le v-\kappa u$ in $\Rd$ and this 
contradicts the definition of $\kappa$.
Thus $v-\kappa u$ must vanish somewhere in $\Rd$, proving the claim. Since
\begin{equation*}\sA (v-\kappa u)\le 0 \quad \text{in~} \Rd\,,\end{equation*}
 by the strong maximum principle we get $v=\kappa u$. This completes the proof.
\end{proof}

From the above result it is evident that only principal eigenfunctions can
have minimal growth at infinity.
Our next result is an extension of \cite[Proposition~8.4]{Berestycki-15}
which establishes a sufficient condition
for minimal growth at infinity of the eigenfunctions.

\begin{theorem}\label{T2.7}
Suppose that
\begin{equation*}
\lim_{r\rightarrow\infty}\,\lambda(\cI,\Bar{B}_{r}^c)\,>\,\lambda_1(\cI)\,,\qquad
\lim_{n\rightarrow\infty}\,\lambda(\cI,B_{n})\,=\,\lambda_1(\cI)\,,
\end{equation*}
 and 
$\supp\bigl(\nu(x,\cdot\,)\bigr)\subset \sB$ for some ball $\sB$,
for all $x\in\Rd$.
Then there exists an eigenfunction $\psi$ for $\lambda_1(\cI)$
which has minimal growth at infinity.
In particular,
 $\lambda_1(\cI)$ is simple in the class of positive functions.
 \end{theorem}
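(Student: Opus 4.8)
The plan is to construct a principal eigenfunction with minimal growth at infinity, and then invoke \cref{T2.6} to conclude simplicity. I would proceed as follows.

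\emph{Step 1: Construction of a candidate eigenfunction via the exterior problem.} Since $\lim_{r\to\infty}\lambda(\cI,\Bar{B}_r^c)>\lambda_1(\cI)$, fix $r_0$ large enough that $\lambda(\cI,\Bar{B}_{r_0}^c)>\lambda_1(\cI)$. Using the existence theory for the Dirichlet problem in the exterior domain $\Bar{B}_{r_0}^c$ (as in \cref{Berestycki-15} and the argument of \cref{L2.2}), obtain a positive $\varphi_0\in\Sobl^{2,p}(\Bar{B}_{r_0}^c)$ with $\cI\varphi_0+\lambda_1(\cI)\varphi_0\le 0$ in $\Bar{B}_{r_0}^c$ (one can even take equality for a suitable eigenvalue $\ge\lambda_1(\cI)$, but the supersolution property is what we need). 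This $\varphi_0$ plays the role of a barrier at infinity. Because $\supp(\nu(x,\cdot))\subset\sB$ for all $x$, the nonlocal term only reaches a bounded distance, so a cut-off modification $\Phi\df\zeta + (1-\zeta)\varphi_0$, with $\zeta$ supported in a slightly larger ball, satisfies $\cI\Phi+(\lambda_1(\cI)+\varepsilon)\Phi\le 0$ outside some compact set $K$, for a small $\varepsilon>0$ with $\lambda_1(\cI)+\varepsilon<\lambda(\cI,\Bar{B}_{r_0}^c)$ --- exactly as in the second half of the proof of \cref{T1.4} (see \cref{ET1.4G} there). Thus $\Phi$ is a strict supersolution at infinity.

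\emph{Step 2: Passing to the limit with a one-point touching condition.} Since $\lim_{n\to\infty}\lambda(\cI,B_n)=\lambda_1(\cI)$, let $\psi_n>0$ solve the Dirichlet eigenvalue problem $\cI\psi_n=-\lambda(\cI,B_n)\psi_n$ in $B_n$, $\psi_n=0$ on $B_n^c$, as in \cref{ET1.4M}. Set $\rho_n\df\min_{\Bar{B}_n}\frac{\Phi}{\psi_n}$ and $\widehat\psi_n\df\rho_n\psi_n$. The strict supersolution inequality for $\Phi$ outside $K$, combined with $\cI\widehat\psi_n+\lambda(\cI,B_n)\widehat\psi_n=0$ and the strong maximum principle, forces the minimum defining $\rho_n$ to be attained inside $K$ (for $n$ large), so $\widehat\psi_n$ touches $\Phi$ from below at a point in the fixed compact set $\Bar{K}$ --- this is the same mechanism as in \cref{T1.4}. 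That one-point touching gives a uniform lower bound on $\widehat\psi_n$ near that point, hence (via the Harnack-type estimates underlying \cref{T1.1}/\cref{T1.2}, or directly via \cite[Theorem~9.20 and 9.22]{GilTru} applied to $\sA\widehat\psi_n=-\sJ_n+\dotsb$ together with $\widehat\psi_n\le\Phi$) uniform $\Sob^{2,p}(B_R)$ bounds on every ball. Extract a subsequence converging in $\Sobl^{2,p}(\Rd)$ to a nonnegative $\psi$ with $\psi\le\Phi$ and $\psi$ positive at the touching point; the limit satisfies $\cI\psi+\lambda_1(\cI)\psi=0$ in $\Rd$, and $\psi>0$ everywhere by the strong maximum principle. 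So $\psi$ is a principal eigenfunction.

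\emph{Step 3: Minimal growth.} Now verify \cref{GS} for $\psi$. Let $\rho>0$ and let $v>0$ satisfy $\cI v\le 0$ in $B_\rho^c$. Enlarge if necessary so that $B_\rho^c\subset K^c$ where $\Phi$ is a strict supersolution. The point is that $\psi$ was built so that $\psi\le\Phi$ and $\Phi$ itself is, up to the cut-off, the exterior supersolution $\varphi_0$ governed by the eigenvalue $\lambda(\cI,\Bar{B}_{r_0}^c)>\lambda_1(\cI)$; since the ``gap'' eigenvalue strictly exceeds $\lambda_1(\cI)$, on the exterior domain one can compare $v$ against $k\psi$. Concretely: on $\partial B_R$ for suitable large $R$, pick $k>0$ small so that $k\psi\le v$ there; the function $w\df v-k\psi$ satisfies $\cI w\le 0$ in $B_R^c$, and using the strict supersolution $\Phi$ (whose associated eigenvalue beats $\lambda_1(\cI)$) as in the comparison step of \cref{T1.4} --- touching from below forces non-vanishing by the strong maximum principle --- one propagates $w\ge 0$ to all of $B_R^c$. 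Hence $k\psi\le v$ in $B_R^c$, which is precisely minimal growth at infinity.

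\emph{Step 4: Simplicity.} Given any positive $v\in\Sobl^{2,d}(\Rd)$ with $\cI v\le 0$ in $\Rd$, \cref{T2.6} applied to the minimal-growth solution $\psi$ yields $v\equiv\kappa\psi$ for some $\kappa>0$. In particular any two principal eigenfunctions are proportional, so $\lambda_1(\cI)$ is simple in the class of positive functions.

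\emph{Main obstacle.} The delicate point is Step~3: showing that the \emph{specific} eigenfunction $\psi$ produced in Step~2 --- and not merely some eigenfunction --- enjoys minimal growth. This hinges on keeping careful track, through the limit $n\to\infty$, of the inequality $\psi\le\Phi$ and of the fact that $\Phi$ is controlled by the strictly larger exterior eigenvalue $\lambda(\cI,\Bar{B}_{r_0}^c)$; the spectral gap $\lim_{r\to\infty}\lambda(\cI,\Bar{B}_r^c)>\lambda_1(\cI)$ is exactly what makes the exterior comparison go through, and the hypothesis $\supp(\nu(x,\cdot))\subset\sB$ is what keeps the nonlocal term from spoiling the cut-off and the localized maximum-principle arguments.
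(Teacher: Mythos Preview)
Your Steps 1, 2, and 4 match the paper's argument closely. The genuine gap is in Step~3. You propose to pick $k$ so that $k\psi\le v$ on $\partial B_R$, set $w=v-k\psi$, note $\cI w\le 0$ in $B_R^c$, and then ``propagate $w\ge 0$ to all of $B_R^c$'' by a touching/strong-maximum-principle argument ``as in \cref{T1.4}''. But the touching argument in \cref{T1.4} works only because (a) one already \emph{knows} the difference is nonnegative (by the choice of the touching constant) and (b) the comparison takes place on a \emph{bounded} domain where the Dirichlet approximant vanishes on the outer boundary. In your setting neither holds: $w\ge 0$ is precisely what you are trying to prove, and $B_R^c$ is unbounded, so the infimum of $w$ need not be attained---it could ``escape to infinity''. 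Knowing $\psi\le\Phi$ does not help: it only bounds $\psi/(v+\delta\Phi)$ by $1/\delta$, not by something that forces the supremum to be realized at a finite point. No maximum principle on $B_R^c$ is available here without exactly the kind of spectral-gap comparison you are trying to establish.

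The paper closes this gap by \emph{not} comparing the limit $\psi$ directly with $v$. Instead it goes back to the approximants $\varphi_n=\kappa_n\psi_n$ (your $\widehat\psi_n$) and compares them with the perturbed function $\zeta_\varepsilon\df v+\varepsilon\varphi$ on the \emph{bounded} annulus $B_n\setminus \Bar B_{R'}$, where $R'>\max\{R_1,\rho\}$ and $\varphi$ is the barrier you built in Step~1. One first scales $v$ so that $\varphi_n\le v$ on $\Bar B_{R'}$ for all $n$ (possible since $\varphi_n\le\varphi$ and $\varphi$ is continuous on the compact $\Bar B_{R'}$). On the annulus, $\cI(\zeta_\varepsilon-\varphi_n)\le(-\varepsilon\lambda+\lambda(\cI,B_n))\varphi<0$ for $n$ large, while $\zeta_\varepsilon-\varphi_n\ge 0$ on the complement (on $\Bar B_{R'}$ by the scaling, on $B_n^c$ because $\varphi_n=0$). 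Since $\lambda\bigl(\cI,\Bar B_{R'}^c\cap B_n\bigr)>0$, the refined maximum principle (\cref{CA1}) gives $\varphi_n\le\zeta_\varepsilon$ in $\Rd$. Now let $n\to\infty$ and then $\varepsilon\to 0$ to obtain $\psi\le v$ in $\Rd$, which is minimal growth. The two ingredients you are missing are: returning to the bounded-domain approximants so that \cref{CA1} applies, and the $\varepsilon\varphi$ perturbation that absorbs the small positive $\lambda(\cI,B_n)$.
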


\begin{proof}
Without loss of generality we can assume $\lambda_1(\cI)=0$.
In view of \cref{T2.6}, it is enough to
show that there exists a principal eigenfunction with minimal growth at infinity.
For every $n\in\NN$, 
let $\psi_n\in\Sobl^{2,d}(B_{n})\cap\cC(\Rd)$ be an eigenfunction associated
with $\lambda(\cI,B_{n})$ i.e.
\begin{equation*}
\cI\psi_n \,=\, -\lambda(\cI,B_{n})\psi_n \text{ in }B_{n}\,,
\quad \psi_n>0 \text{\ in\ } B_{n} \,,
\text{ and }\, \psi_n=0 \text{\ in\ }B_{n}^c\,.
\end{equation*}
By hypothesis, there exist a large positive number $R$ and
$n_\circ\in\NN$ with $n_{\circ}>R$, such that for all $n\ge n_{\circ}$,
we have $\lambda(\cI,\Bar{B}_{R}^c)>\lambda>\lambda(\cI,B_{n})>0$, for some $\lambda>0$.

From the definition of $\lambda(\cI,\Bar{B}_{R}^c)$ there exists a nonnegative function
$\phi\in\Sobl^{2,d}(\Bar{B}_{R}^c)\cap\cC(\Rd)$ satisfying,
\begin{equation*}
\cI\phi \,\le\, -\lambda\phi \text{\ in\ }\Bar{B}_{R}^c\,,
\quad \phi>0 \text{\ in\ }\Bar{B}_{R}^c.
\end{equation*}
Let $\chi:\Rd\to [0, 1]$ be a $\cC^2$ cut-off function satisfying $\chi=1$ in $B_R$ and 
$\chi=0$ in $B_{R+1}^c$. We define $\varphi=\chi+\phi$.
Note that $\varphi>0$ in $\Rd$ and
\begin{equation*}
\cI\varphi \,\le\, -\lambda\varphi \text{\ in\ }\Bar{B}_{R_1}^c\,,
\quad \varphi>0 \text{\ in\ }\Rd \,,
\end{equation*}
for some large $R_1$ satisfying $\sB\cap(\sB-x)=\varnothing$
for all $x\in \Bar{B}_{R_1}^c$.

Fix any $n> \max\{R_1,n_\circ\}$. Let $\kappa_n>0$ such that
$\kappa_{n}\psi_n\le \varphi$ in $B_n$ and $\kappa_{n}\psi_n$ touches $\varphi$
at some point in $B_{n}$.
We claim that $\kappa_{n}\psi_n$ has to touch $\varphi$ inside $B_{R_1}$.
Note that in $\Bar{B}_{R_1}^c\cap B_{n}$ we have, 
\begin{equation*}
\sA(\varphi-\kappa_{n}\psi_n) \,\le\, \cI(\varphi-\kappa_{n}\psi_n)
\,\le\, -\lambda\varphi+\lambda(\cI, B_n)\kappa_{n}\psi_n
\,\le\,  (-\lambda+\lambda(\cI, B_n))\varphi \,<\, 0\,.
\end{equation*}
If $\kappa_{n}\psi_n$ touches $\varphi$ outside $B_{R}$, then applying the
strong maximum principle we have $\kappa_{n}\psi_n\equiv\varphi$
in $\Bar{B}_{R_1}^c\cap B_{n}$. But since $\psi_n=0$ on 
$\partial B_{n}$, we
must have $\varphi=0$ on $\partial B_{n}$  which is not possible. 
Normalizing, we work with the eigenfunction $\varphi_n\df\kappa_{n}\psi_n$ instead of
$\psi_n$.
Exploiting a similar method as in \cref{T1.4} and \cref{R1.4} we can show that
$\varphi_n$ converges along some subsequence, to a positive eigenfunction
$\psi\in \Sobl^{2,p}(\Rd)$, $p>d$, associated to the eigenvalue $\lambda_1(\cI)=0$. 

Now let $v$ be any positive supersolution as in \cref{GS} satisfying $\cI v\le 0$
in $B_{\rho}^c$.
Fix some $R^\prime>\max\{R_1, \rho\}$.
We scale $v$ by multiplying it with a positive constant $\kappa$
in such a way that $\sup_n\sup_{B_{R^\prime}}\varphi_n\le v$ in $B_{R^\prime}$.
Then to complete the proof it
is enough to show that $\psi\le v$ in $\Rd$.
For $\varepsilon>0$ we define $\zeta_\varepsilon=v+\varepsilon \varphi$ and we note that
$\cI \zeta_\varepsilon\le -\lambda\varepsilon\varphi$ in $B^c_{R^\prime}$. 
Hence in $\Bar{B}_{R^\prime}^c\cap B_{n}$ we have 
\begin{equation*}
\cI(\zeta_\varepsilon-\varphi_n) \,\le\, -\varepsilon\lambda\varphi+
\lambda(\cI, B_n)\varphi_n
\,\le\, (-\varepsilon\lambda + \lambda(\cI, B_n)\varphi \,<\, 0
\end{equation*}
for all large enough $n$, since $\lim_{n\to\infty}
\lambda(\cI, B_n)=\lambda_1(\cI)=0$.
Since
$\lambda\bigl(\cI,\Bar{B}_{R^\prime}^c\cap B_{n}\bigr)>\lambda_1(\cI)=0$
by \cref{CA1}, we have
$\psi_n\le \zeta_\varepsilon$ in $\Rd$.
Now we first let $n\to\infty$, and then $\varepsilon\to 0$, to
conclude that $\psi\le v$ in $\Rd$.
\end{proof}

\begin{remark}
\Cref{T2.7} should be compared with the second assertion in \cref{T1.4}.
At first sight, \cref{T2.7} seems to be stronger since
hypotheses (1), (2) and \cref{ET1.4A} of \cref{T1.4} are not enforced, but keep
in mind that $\lambda_1(\cI)>-\infty$ is a blanket assumption in
\cref{S2}.
\end{remark}

\begin{remark}
Suppose that  for some ball  $\sB_\circ$ we have 
$\supp\bigl(\nu(x,\cdot\,)\bigr)\subset\sB_\circ$ for all $x$ and, $\nu(x)$ is bounded.
Also, assume that the coefficients satisfy
\begin{equation*}
\norm{a}_{L^\infty(\Rd)}<\infty\,,\quad
\lim_{\abs{x}\to\infty}\frac{b(x)\cdot x}{\abs{x}}\,=\,\pm \infty\,,
\quad\text{and\ }\sup_{\Rd}\, c(x)\,<\,\infty\,.
\end{equation*}
This implies the condition
$\lim_{r\rightarrow\infty}\lambda(\cI,\Bar{B}_{r}^c)>\lambda_1(\cI)$ in \cref{T2.7}.
To see this, define the function
$\phi_r(x)\df\exp(\mp  \abs{x})$ in $\Bar{B}_{r}^c$,
$r>0$, where $\mp$ matches with the sign
$\pm$ in the hypothesis.
Now by direct calculation we obtain for $x\in \Bar{B}_{r}^c$,
\begin{align*}
(\cI+\lambda_1(\cI)+1)\phi_r(x) &\,=\,\Biggl(\frac{a_{ij}x_ix_j}{\abs{x}^2}
\mp\biggl(\frac{\text{Tr}(a_{ij})}{\abs{x}}-\frac{a_{ij}x_ix_j}{\abs{x}^3}
+\frac{b(x)\cdot x}{\abs{x}}\biggr)\\
&\mspace{100mu}+c(x)-\nu(x)+\lambda_1(\cI)+1
 +\int_{\sB_\circ}\frac{\E^{\mp\abs{x+z}}}{\E^{\mp\abs{x}}}
\nu(x,{\rm dz})\Biggr)\phi_r(x)\,.
\end{align*}
Now using the given hypotheses  and choosing large $r$, we deduce that
$(\cI+\lambda_1(\cI)+1)\phi_r<0$ for $x\in \Bar{B}_{r}^c$.
This implies that $\lambda(\cI,\Bar{B}_{r}^c)\ge\lambda_1(\cI)+1$ for
all large enough $r$ and completes the assertion.
\end{remark}

The result that follows establishes the equivalence between the minimal
growth  at infinity and
a certain monotonicity property of the principal eigenvalue.
We need the following notion of monotonicity from \cite[Section~2.2]{ABS-19}.
To express explicitly the dependency of the eigenvalue
of the potential $c$ we write the principal eigenvalue
$\lambda_1(\cI)$ as $\lambda_1(c)$.

\begin{definition}\label{D2.3}
We say $c\mapsto\lambda_1(c)$ is {\it strictly monotone on the right at $c$}
if for any non-zero, nonnegative
bounded function $h$ we have $\lambda_1(c+h)<\lambda_1(c)$.
\end{definition}

When $\nu=0$, it is shown in \cite[Theorem~2.1]{ABG-19} that Agmon's
minimal growth at infinity is
equivalent to the monotonicity property on the right.
The argument in \cite{ABG-19} is based on a probabilistic
method which uses the stochastic representation of the principal eigenfunction.
Our next result extends 
this equivalence for nonlocal operators, and also supplies a simpler proof.

\begin{theorem}\label{T3.8}
The following hold.
\begin{itemize}
\item[(i)]
Suppose that the principal eigenfunction $\psi$ satisfying
$\cI\psi+\lambda_1(c)\psi=0$ in $\Rd$ has minimal growth at infinity.
In addition, assume that
for every bounded $h\gneq 0$ there exists a positive supersolution of
$\cI + h+\lambda_1(c+h)$ in $\Rd$.
Then $\lambda_1(c)$ is strictly monotone on the right at $c$.
\item[(ii)]
Assume that $\lambda_1(c)$ is strictly monotone on the right at $c$ and there exists a 
positive function $\phi\in\Sobl^{2,d}(\sB^c)\cap\cC(\Rd)$ satisfying
\begin{equation*}
\cI\phi+\lambda_1(c)\phi \,\le\, 0\quad \text{in~} \sB^c\,,
\end{equation*}
where $\sB\subset\Rd$ is some ball.
Then there exists a principal eigenfunction for $\lambda_1(\cI)$, which
has minimal growth at infinity.
In particular,  $\lambda_1(\cI)$ is a simple eigenvalue
by \cref{T2.6}.
\end{itemize}
\end{theorem}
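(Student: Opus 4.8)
For part~(i), the plan is to argue by contradiction: suppose there is a bounded $h\gneq 0$ with $\lambda_1(c+h)\ge\lambda_1(c)$; since adding a nonnegative potential can only lower the eigenvalue (recall $\lambda(\cI+h,D)\le\lambda(\cI,D)$ for every $D$, hence $\lambda_1(c+h)\le\lambda_1(c)$), this forces $\lambda_1(c+h)=\lambda_1(c)$. By hypothesis there is a positive supersolution $w$ of $\cI+h+\lambda_1(c+h)$ in $\Rd$, i.e.\ $\cI w+hw+\lambda_1(c)w\le 0$ in $\Rd$, which in particular gives $\cI w+\lambda_1(c)w\le -hw\le 0$ in $\Rd$. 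Applying \cref{T2.6} to the operator $\cI+\lambda_1(c)$ (for which $\psi$ is a positive solution of minimal growth at infinity, after the shift $c\mapsto c+\lambda_1(c)$), we conclude $w\equiv\kappa\psi$ for some $\kappa>0$. Substituting back, $0\ge\cI w+hw+\lambda_1(c)w=\kappa(\cI\psi+\lambda_1(c)\psi)+\kappa h\psi=\kappa h\psi$, forcing $h\psi\equiv 0$ and hence $h\equiv 0$ since $\psi>0$ everywhere, a contradiction. Thus $\lambda_1(c+h)<\lambda_1(c)$ for every bounded $h\gneq 0$, which is the strict right-monotonicity.

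For part~(ii), I would construct a principal eigenfunction of minimal growth by the same limiting scheme used in \cref{T1.4} and \cref{T2.7}, using the exterior supersolution $\phi$ together with a cut-off to build a global positive supersolution $\varphi$ of $\cI+\lambda_1(c)$ outside a large ball $\sB$, then comparing with Dirichlet eigenfunctions $\psi_n$ of $\cI$ on $B_n$. The key point is to show $\lim_{n\to\infty}\lambda(\cI,B_n)=\lambda_1(\cI)$, which is needed to run that construction: here the strict right-monotonicity enters. Indeed, if the limit $\widetilde\lambda\df\lim_n\lambda(\cI,B_n)$ strictly exceeds $\lambda_1(\cI)$, pick a bounded $h\gneq 0$ supported in a fixed ball and small enough that $\lim_n\lambda(\cI+h,B_n)$ still exceeds $\lambda_1(\cI)$; strict monotonicity gives $\lambda_1(\cI+h)<\lambda_1(\cI)$, and a contradiction is obtained by combining the exterior supersolution $\phi$ (which survives the perturbation outside its support) with the Dirichlet eigenfunctions of $\cI+h$ to produce an admissible test function for $\lambda_1(\cI)$ at a level above $\lambda_1(\cI)$, exactly as in the contradiction argument in the proof of \cref{T1.4}. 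Once $\lim_n\lambda(\cI,B_n)=\lambda_1(\cI)$ is in hand, the pair of hypotheses required by \cref{T2.7} is met (the condition $\lim_{r\to\infty}\lambda(\cI,\Bar B_r^c)>\lambda_1(\cI)$ follows from the existence of $\phi$), so \cref{T2.7} yields an eigenfunction of minimal growth at infinity, and \cref{T2.6} gives simplicity in the class of positive functions.

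The main obstacle is part~(ii): verifying $\lim_n\lambda(\cI,B_n)=\lambda_1(\cI)$ from strict right-monotonicity alone, since we are not assuming the structural hypotheses (H1)--(H3) that guarantee a Harnack inequality, nor the growth conditions of \cref{T1.4}. The delicate step is to pass to the limit in the nonlocal term when comparing $\psi_n$ (or $\psi_n$ for the perturbed operator) against the global supersolution built from $\phi$ — this requires, as in \cref{T1.4}, that the minimum of $\varphi/\psi_n$ be attained in a fixed compact set, which in turn uses the strong maximum principle applied to $\cI(\varphi-\kappa_n\psi_n)+\lambda_1(\cI)(\varphi-\kappa_n\psi_n)<0$ in the exterior region together with local uniform bounds on $\sJ_n$ coming from the local compact support of $\nu$. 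I expect the bookkeeping around the perturbation $h$ (choosing its size so that the exterior inequality for $\phi$ is undisturbed while the interior Dirichlet eigenvalue stays above $\lambda_1(\cI)$) to be the place where care is needed, but no genuinely new idea beyond the techniques of \cref{T1.4,T2.6,T2.7}.
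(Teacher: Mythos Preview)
Your argument for part~(i) is correct and coincides with the paper's proof.

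For part~(ii), the reduction to \cref{T2.7} does not go through. First, the exterior supersolution $\phi$ only gives $\cI\phi+\lambda_1(c)\phi\le 0$ in $\sB^c$, hence $\lambda(\cI,\Bar B_r^c)\ge\lambda_1(c)$; it does \emph{not} yield the strict inequality $\lim_{r\to\infty}\lambda(\cI,\Bar B_r^c)>\lambda_1(\cI)$ that \cref{T2.7} requires. Second, \cref{T2.7} assumes $\supp(\nu(x,\cdot))$ lies in a fixed ball for all $x$, which is not part of the hypotheses here. The strictness is not a technicality: in both \cref{T1.4} and \cref{T2.7} it is exactly what makes $\cI(\phi-\kappa_n\psi_n)+\lambda(\cI,B_n)(\phi-\kappa_n\psi_n)<0$ in the exterior, forcing the touching point into a fixed compact set. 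With only $\cI\phi+\lambda_1(c)\phi\le 0$ and $\lambda(\cI,B_n)\ge\lambda_1(c)$, that inequality goes the wrong way. Your perturbation by $h$ does not fix this: you ask that $\lim_n\lambda(\cI+h,B_n)$ \emph{still exceed} $\lambda_1(\cI)$, but then the Dirichlet eigenvalues again sit above the level at which $\phi$ is a supersolution, and the exterior comparison fails for the same reason.

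The paper uses a different construction that works at the exact level $\lambda_1(c)$ and needs only the weak inequality for $\phi$. Taking $\sB=B_1$ without loss of generality, it solves the \emph{inhomogeneous} Dirichlet problems $\cI\varphi_n+\lambda_1(c)\varphi_n=-\Ind_{B_1}$ in $B_n$, $\varphi_n=0$ on $B_n^c$, and sets $\psi_n=\kappa_n\varphi_n$ with $\kappa_n$ chosen so that $\psi_n\le\phi$ on $\Bar B_1$. Since both $\psi_n$ and $\phi$ satisfy $(\cI+\lambda_1(c))(\cdot)\le 0$ in $B_n\setminus\Bar B_1$, the comparison principle \cref{CA1} gives $\psi_n\le\phi$ in $\Rd$ with no strict gap required. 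Strict right-monotonicity then enters to force $\kappa_n\to 0$: if $\kappa_{n_k}\ge\hat\kappa>0$ along a subsequence, the limit $\psi>0$ would satisfy $\cI\psi+\lambda_1(c)\psi=-\kappa'\Ind_{B_1}$ with $\kappa'>0$, i.e.\ $(\cI+\kappa'\Ind_{B_1}\psi^{-1})\psi+\lambda_1(c)\psi=0$, contradicting $\lambda_1(c+\kappa'\Ind_{B_1}\psi^{-1})<\lambda_1(c)$. With $\kappa_n\to 0$ the limit is a genuine principal eigenfunction, and minimal growth is verified by the same comparison in $B_n\setminus\Bar B_\rho$ against an arbitrary exterior supersolution $v$. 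The idea you are missing is to replace the Dirichlet eigenvalue problem by this inhomogeneous problem at level $\lambda_1(c)$, so that the comparison with $\phi$ needs no strict spectral gap.
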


\begin{proof}
(i)\, Let $h\gneq 0$ be a bounded function.
From the definition of the principal eigenvalue it is evident that
$\lambda_1(c+h)\le\lambda_1(c)$.
Arguing by contradiction, suppose that
$\lambda_1(c+h)=\lambda_1(c)$. 
Consider a positive $\varphi\in\Sobl^{2,d}(\Rd)$ satisfying 
\begin{equation*}
\cI\varphi + h\varphi+\lambda_1(c+h)\varphi \,\le\, 0\quad \text{in~} \Rd\,.
\end{equation*}
Since $h\ge 0$ it follows from above that 
\begin{equation*}
\cI\varphi +\lambda_1(c)\varphi \,\le\, 0\quad \text{in~} \Rd\,.
\end{equation*}
Using \cref{T2.6} we then obtain $\varphi=\kappa\psi$ for some $\kappa>0$.
This of course, implies that
$h\varphi=0$ in $\Rd$ which contradicts the fact $h\neq 0$.
Hence we must have $\lambda_1(c+h)<\lambda_1(c)$.

(ii)\, We construct a principal eigenfunction with minimal growth at infinity. 
With no loss of generality we may assume that $\sB=B_1$, the unit ball centered at $0$.
 Let $f(x)=\Ind_{B_1}(x)$. 
Let $\varphi_n\in \Sobl^{2,p}(B_n)\cap \cC(\Rd)$ be the unique solution of 
\begin{equation*}
\cI \varphi_n + \lambda_1(c)\varphi_n \,=\,
-f\quad \text{in~} B_n\,, \quad\text{and\ \ } \varphi_n=0 \text{~on~} B_n^c\,.
\end{equation*}
Existence of $\varphi_n$ follows from \cref{TA3}, and
we have $\varphi_n>0$ in $B_n$.

We claim that $\beta_n\df\max_{\Bar{B}_1}\varphi_n\to\infty$ as $n\to\infty$.
To prove the claim, suppose that, on the contrary,
$\beta_{n_k}$ is bounded along some subsequence $\{n_k\}$. Define 
\begin{equation*}
\kappa_n \,=\,
\max\,\{t\,\colon \phi-t\varphi_n>0 \text{~in~} \Bar{B}_1\}\wedge 1\,.
\end{equation*}
It is evident that 
\begin{equation*}
\kappa_{n}\ge \left[\beta^{-1}_{n}\,\min_{\Bar{B}_1}\,\phi\right]\wedge 1\,.
\end{equation*}
Hence $\kappa_{n_k}\ge \hat\kappa>0$ for all $n_k$.
Letting $\psi_n=\kappa_n\varphi_n$, we obtain
\begin{equation*}
\cI \psi_n + \lambda_1(c)\psi_n \,=\, -\kappa_n f\quad \text{in~} B_n, \quad
\text{and\ \ } \psi_n=0\text{~on~} B_n^c.
\end{equation*}
Applying the comparison principle in \cref{CA1} in $B_n\setminus\Bar{B}_1$ we get
$\psi_n\le \phi$ in $\Rd$.
Next, applying an argument similar to \cref{T1.4} (see the arguments after \cref{ET1.4J})
we can extract a subsequence of $\{\psi_{n_k}\}$ and $\{\kappa_{n_k}\}$ converging 
to $\psi\in\Sobl^{2,p}(\Rd)$, $p>d$, and $\kappa^\prime>0$, respectively.
Furthermore, $\psi\ge 0$ and
\begin{equation}\label{ET2.8B}
\cI \psi + \lambda_1(c)\psi=-\kappa^\prime f\quad \text{in~} \Rd\,.
\end{equation}
Using the strong maximum principle we either have $\psi>0$ in $\Rd$ or $\psi\equiv 0$.
But $\psi\equiv 0$
is not possible since $\kappa^\prime f\neq 0$. Hence, $\psi>0$ in $\Rd$.
We write \cref{ET2.8B} as
\begin{equation*}
\cI\psi + \bigl(\kappa^\prime\Ind_{B_1}\psi^{-1}+\lambda_1(c)\bigr)\psi
\,=\, 0\quad \text{in~} \Rd\,,
\end{equation*}
which also implies $\lambda_1(c +\kappa^\prime\Ind_{B_1}\psi^{-1})\ge \lambda_1(c)$.
But this contradicts the assumption of strict monotonicity on the right.
This establishes the claim that
$\beta_n\df\max_{\Bar{B}_1}\varphi_n\to\infty$.
It also implies that $\kappa_n\to 0$ as $n\to\infty$,
and thus the constant $\kappa'$ in \cref{ET2.8B} equals $0$.
This shows that 
the subsequence $\psi_{n_k}$ converges to a positive $\psi\in\Sobl^{2,p}(\Rd)$
satisfying $\cI \psi+\lambda_1(c)\psi=0$ in $\Rd$.
Next we show that $\psi$ has minimal growth property at 
infinity.
Let $v\in \Sobl^{2,d}(\Bar{B}^c_\rho)\cap\cC(\Rd)$ be a positive function and 
\begin{equation*}
\cI v+\lambda_1(c)v \,\le\, 0 \quad \text{in~} \Bar{B}^c_\rho\,.
\end{equation*}
Without any loss of generality we may assume that $\rho\ge 1$.
Let $\kappa>0$ be such that $\kappa v-\psi_{n_k}\ge 0$ in $\Bar{B}_\rho$
for all $n_k\ge \rho$.
Now
\begin{equation*}
\cI (\kappa v-\psi_{n_k})+\lambda_1(c)(\kappa v-\psi_{n_k}) \,\le\, 0
\quad \text{in~} B_{n_k}\cap \Bar{B}^c_\rho\,.
\end{equation*}
Using \cref{CA1} we than have $\psi_{n_k}\le \kappa v$ in $\Rd$.
Letting $n_k\to\infty$ we then have
$\psi\le\kappa v$ in $\Rd$. This completes the proof.
\end{proof}

\appendix
\section{The Dirichlet Principal eigenvalue and its properties}\label{Appen}

Recall from \cref{E1.1,E1.2}
the operators 
\begin{equation*}
\cI f(x) \,=\, \trace\bigl(a(x) \grad^2 f\bigr) + b(x)\cdot\grad f(x)
+ c(x) f(x) + I[f, x]\,,
\end{equation*}
where 
\begin{equation*}
I[f, x] \,=\, \int_{\Rd} \bigl(f(x+z)-f(x)\bigr)\,\nu(x,\D{z})\,,
\end{equation*}
and
\begin{equation*}
\sA f(x) \,=\, \trace\bigl(a(x) \grad^2 f\bigr) + b(x)\cdot\grad f(x)
+ c(x) f(x) - \nu(x) f(x)\,,
\end{equation*}
with $\nu(x)=\nu(x,\Rd)$. Let $D$ be a bounded smooth domain. 
The following assumption on the coefficients is enforced throughout this section,
without further mention.

\begin{assumption}
The following hold.
\begin{itemize}
\item[(A1)]
$\nu(x,\cdot)$ is a nonnegative Borel measure and the map $x\mapsto \nu(x,\Rd)$
is locally bounded.
\item[(A2)]
The map $x\mapsto a(x)$ is continuous in $\Bar{D}$, and there exists a positive
constant $\upkappa$ such that $\upkappa I\le a(x)\le \upkappa^{-1} I$
for all $x\in\Bar{D}$, where $I$ denotes the identity matrix.
\item[(A3)]
$b\colon D\to\Rd$ and $c\colon D\to\RR$ are bounded.
\end{itemize}
\end{assumption}
By $\cC_{b,+}(\Rd)$ we denote the set of all bounded,
nonnegative continuous functions on $\Rd$.
Recall from \cref{E1.4}
that the Dirichlet principal eigenvalue of $\cI$ in $D$ is defined as follows:
\begin{equation*}
\lambda(\cI,D) \,\df\,\sup\,
\bigl\{\lambda\in\RR\,\colon \Psi(\lambda)\neq \varnothing \bigr\}\,,
\end{equation*}
where
\begin{equation*}
\Psi(\lambda)\,\df\,\bigl\{\psi\in\cC_{b, +}(\Rd)\cap\Sobl^{2,d}(D)\,\colon
\psi>0 \text{~in~} D, \text{~and~}
\cI\psi(x)+\lambda\psi \,\le\, 0 \text{~in~} D\bigr\}\,.
\end{equation*}
The following result is proved in \cite[Theorem~2.1]{AB19}.

\begin{theorem}\label{TA1}
There exists a unique $\psi_D\in\Sobl^{2,p}(D)\cap\cC_{b,+}(\Rd)$, $p>d$, satisfying
\begin{equation}\label{ETA1A}
\begin{split}
\cI \psi_D &\,=\, -\lambda(\cI,D)\, \psi_D\quad \text{in~} D\,, 
\\
\psi_D&\,=\,0\quad \text{in~} D^c\,,
\\
\psi_D(0)&\,=\,1\,,\quad \psi_D>0\quad \text{in~} D\,.
\end{split}
\end{equation}
\end{theorem}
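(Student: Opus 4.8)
The plan is to produce $\psi_D$ as the positive eigenfunction of a resolvent operator via the Krein--Rutman theorem, and then to identify the corresponding eigenvalue with $\lambda(\cI,D)$; uniqueness and the normalization come out of a sliding comparison argument.

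\emph{Step 1: a positive compact resolvent.} Write $\cI u=\sA u+\int_{\Rd}u(x+z)\,\nu(x,\D z)$. For $u$ vanishing off the bounded set $D$ and $x\in\overline D$, only the values of $u$ on $D$ enter the integral, so under Assumptions (A1)--(A3) the map $u\mapsto\int_{\Rd}u(\cdot+z)\,\nu(\cdot,\D z)$ is a bounded operator on $\cC(\overline D)$ of norm at most $\sup_D\nu$. Choose $\theta$ sufficiently large (so that $\theta>\sup_D c$ and $\theta>\sup_D(c-\nu)+\sup_D\nu$). The local Dirichlet problem for $\theta-\sA$ is then uniquely solvable in $\Sobl^{2,p}(D)$, $p>d$, by the standard $\Sob^{2,p}$ theory (using continuity and uniform ellipticity of $a$), and the nonlocal term is a strict contraction relative to $(\theta-\sA)^{-1}$; a Neumann series yields a unique $u=:R_\theta f\in\Sobl^{2,p}(D)\cap\cC(\overline D)$, extended by $0$ off $D$, solving $\cI u-\theta u=-f$ in $D$. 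An interior-minimum argument---at an interior minimum of $u$ the second-order term is nonnegative, the drift vanishes, and $I[u,\cdot]\ge0$ because $u$ attains its global minimum there---shows that $\theta-\cI$ obeys the maximum principle in $D$, so $f\ge0$ forces $R_\theta f\ge0$; and since $\sA u-\theta u\le\cI u-\theta u=-f\le0$ when $u\ge0$, the strong maximum principle for $\sA$ upgrades this to $R_\theta f>0$ in $D$ whenever $f\gneq0$. Finally the a priori bound $\norm{R_\theta f}_{\Sob^{2,p}(D)}\le C\norm{f}_{\infty}$ and the compact Sobolev embedding make $R_\theta$ compact on the Banach lattice $E\df\{u\in\cC(\overline D)\colon u=0\text{ on }\partial D\}$, and the strong positivity just noted makes it irreducible.

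\emph{Step 2: Krein--Rutman, identification of the eigenvalue, and uniqueness.} Being compact, positive and irreducible, $R_\theta$ has positive spectral radius $r$, which is an eigenvalue with a positive eigenfunction $\psi_D\in E$; interior elliptic estimates place $\psi_D$ in $\Sobl^{2,p}(D)$ for every $p>d$. With $\lambda^\#\df r^{-1}-\theta$, the identity $R_\theta\psi_D=r\psi_D$ reads $\cI\psi_D+\lambda^\#\psi_D=0$ in $D$, $\psi_D=0$ in $D^c$, $\psi_D>0$ in $D$, and after rescaling $\psi_D(0)=1$. Since $\psi_D$ is admissible in \cref{E1.4}, $\lambda(\cI,D)\ge\lambda^\#$. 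For the reverse inequality, let $\phi\in\cC_{b,+}(\Rd)\cap\Sobl^{2,d}(D)$ be positive in $D$ with $\cI\phi+\mu\phi\le0$ in $D$; because $\phi\ge0$ this gives $\sA\phi+\mu\phi\le-\int_{\Rd}\phi(x+z)\,\nu(x,\D z)\le0$, so $\phi$ is a nonnegative supersolution of the purely local operator $\sA+\mu$, hence positive throughout $D$ and, like $\psi_D$, satisfies a Hopf lower bound near $\partial D$. Put $t^*\df\sup\{t>0\colon t\psi_D\le\phi\text{ in }D\}\in(0,\infty)$ and $w\df\phi-t^*\psi_D\ge0$. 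If $\mu>\lambda^\#$ then $\cI w+\lambda^\# w=\cI\phi+\lambda^\#\phi=(\cI\phi+\mu\phi)+(\lambda^\#-\mu)\phi<0$ in $D$, hence $\sA w+\lambda^\# w<0$ in $D$; a pointwise argument rules out an interior zero of $w$ (and $w\equiv0$ is impossible, as it would force $\cI\phi+\lambda^\#\phi=0$), while the Hopf lemma rules out a zero of $w$ on $\partial D$, and comparing the Hopf bounds for $w$ and $\psi_D$ shows $w-\varepsilon\psi_D\ge0$ in $D$ for small $\varepsilon>0$---contradicting the maximality of $t^*$. Thus $\mu\le\lambda^\#$, i.e.\ $\lambda(\cI,D)=\lambda^\#$. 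The same sliding argument applied to two positive solutions $\psi_1,\psi_2$ of $\cI\psi+\lambda(\cI,D)\psi=0$ vanishing off $D$---with $t^*\df\sup\{t\colon t\psi_1\le\psi_2\}$, so $w\df\psi_2-t^*\psi_1\ge0$ satisfies $\cI w+\lambda(\cI,D)w=0$ and vanishes somewhere in $\overline D$---yields $w\equiv0$ by the strong maximum principle and Hopf lemma for $\sA$, proving uniqueness up to a positive constant; the normalization $\psi_D(0)=1$ then selects the unique eigenfunction.

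\emph{Main obstacle.} The crux is Step 1: arranging the resolvent so that \emph{both} solvability and positivity persist under the nonlocal perturbation, which is exactly what dictates the choice of $\theta$ and the maximum principle for $\theta-\cI$ on $D$. After that the remaining argument is routine; the one point needing care is the appeal to the Hopf boundary point lemma, which is legitimate because every nonnegative (super)solution of $\cI+\mu$ is a (super)solution of the purely local, uniformly elliptic operator $\sA+\mu$, to which the classical results apply.
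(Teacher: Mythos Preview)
The paper does not give a proof of this theorem; it simply cites \cite[Theorem~2.1]{AB19}. Your Krein--Rutman approach via a shifted resolvent is the natural and standard route for such results, and the overall strategy is sound.

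Two technical points are worth tightening. First, your interior-minimum argument for the positivity of $R_\theta$ is phrased pointwise, which formally requires $u\in C^2$; $W^{2,p}$ with $p>d$ only gives $C^{1,\alpha}$. You can sidestep this entirely: both $(\theta-\sA)^{-1}$ (by the classical maximum principle) and the map $u\mapsto\int u(\cdot+z)\,\nu(\cdot,\D z)$ are positive operators, so the Neumann series $R_\theta=\sum_{n\ge0}\bigl[(\theta-\sA)^{-1}N\bigr]^n(\theta-\sA)^{-1}$ is manifestly positive. Second, in the sliding step identifying $\lambda^\#$ with $\lambda(\cI,D)$, the admissible $\phi$ in \cref{E1.4} lies only in $\Sobl^{2,d}(D)\cap C(\Rd)$ and may vanish on $\partial D$, so the boundary comparison of $w=\phi-t^*\psi_D$ against $\psi_D$ needs the barrier version of Hopf (lower Dini derivative) together with the $C^1$ upper bound on $\psi_D$, applied uniformly via compactness of $\partial D$. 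This works, but it is cleaner to first prove the alternative characterization in \cref{TA2} with test functions satisfying $\inf_{\Rd}\phi>0$; then $\phi-t^*\psi_D$ is strictly positive on $\partial D$ and the sliding argument avoids boundary analysis altogether, exactly as in the paper's proof of \cref{TA2}.
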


We also have the following characterization of the principal eigenvalue.

\begin{theorem}\label{TA2}
It holds that
\begin{align}
\lambda(\cI,D) &\,=\, \sup\,\Bigl\{\lambda\in\RR\,\colon \exists\,
\psi\in\cC_{b}(\Rd)\cap\Sobl^{2,d}(D) \mathrm{~satisfying~} \inf_{\Rd}\psi>0\nonumber\\
&\mspace{420mu}\mathrm{~and~}
\cI\psi(x)+\lambda\psi \,\le\, 0\mathrm{~in~} D\Bigr\}\label{ETA2A}
\\
&\,=\, \inf\,\Bigl\{\lambda\in\RR\,\colon \exists\, \psi\in\cC_{b}(\Rd)\cap\Sobl^{2,d}(D)
\mathrm{~satisfying~} \sup_{D}\psi>0,\;\psi \,\le\, 0 \mathrm{~in~}  D^c,\nonumber\\
&\mspace{420mu}\mathrm{~and~} \cI\psi(x)+\lambda\psi\ge 0 \mathrm{~in~} D\Bigr\}\,.
\label{ETA2B}
\end{align}
\end{theorem}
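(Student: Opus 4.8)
My plan is to prove each of the two identities by the classical scheme used in \cite{Berestycki-15} and \cite{AB19}: in each one, an inequality is immediate because one side uses a strictly more restrictive class of test functions, and the reverse is obtained by manufacturing a test function from the Dirichlet eigenfunction $\psi_D$ of \cref{TA1} together with the comparison principle in bounded domains. Throughout I write $\bar\lambda$ for the supremum in \cref{TA2} and $\underline\lambda$ for the infimum in \cref{TA2}.

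For the supremum identity \cref{ETA2A}: any $\psi$ admissible there is bounded, continuous, and satisfies $\psi\ge\inf_{\Rd}\psi>0$, hence $\psi\in\cC_{b,+}(\Rd)\cap\Sobl^{2,d}(D)$ with $\psi>0$ in $D$ and $\cI\psi+\lambda\psi\le0$ in $D$, i.e.\ $\psi\in\Psi(\lambda)$; therefore $\bar\lambda\le\lambda(\cI,D)$ at once. For the reverse inequality I would fix $\lambda<\lambda(\cI,D)$, note that then $\lambda(\cI+\lambda,D)=\lambda(\cI,D)-\lambda>0$, and solve the linear Dirichlet problem $\cI\psi_0+\lambda\psi_0=0$ in $D$ with exterior datum $\psi_0\equiv1$ on $D^c$ — which is possible by \cref{TA3} applied to $w\df\psi_0-1$, a solution of $(\cI+\lambda)w=-(c+\lambda)$ in $D$ with zero exterior data. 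Comparing $\psi_0$ with the constant $0$ via the comparison principle \cref{CA1} (available since $\lambda(\cI+\lambda,D)>0$) gives $\psi_0\ge0$ in $D$, and then $\psi_0>0$ in $D$ by the strong maximum principle for $\sA+\lambda$, using $\sA\psi_0\le\cI\psi_0$ where $\psi_0\ge0$. Since $\psi_0$ is continuous on $\Rd$, equals $1$ on $D^c$, and is positive on the compact set $\Bar{D}$, we get $\inf_{\Rd}\psi_0>0$, while $\cI\psi_0+\lambda\psi_0=0\le0$ in $D$; hence $\psi_0$ is admissible in \cref{ETA2A}, so $\bar\lambda\ge\lambda$. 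Letting $\lambda\uparrow\lambda(\cI,D)$ gives $\bar\lambda=\lambda(\cI,D)$.

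For the infimum identity \cref{ETA2B}: the eigenfunction $\psi_D$ of \cref{TA1} satisfies $\cI\psi_D+\lambda(\cI,D)\psi_D=0\ge0$ in $D$, $\psi_D=0\le0$ on $D^c$, and $\sup_D\psi_D>0$, so $\lambda(\cI,D)$ belongs to the set defining $\underline\lambda$ and thus $\underline\lambda\le\lambda(\cI,D)$. For the reverse I would argue by contradiction: if $\lambda<\lambda(\cI,D)$ and some $\psi\in\cC_b(\Rd)\cap\Sobl^{2,d}(D)$ satisfied $\psi\le0$ on $D^c$ and $\cI\psi+\lambda\psi\ge0$ in $D$, then, since $\lambda(\cI+\lambda,D)>0$, the comparison principle \cref{CA1} applied to $\cI+\lambda$ (with subsolution $\psi$ and supersolution $0$, ordered on $D^c$) would force $\psi\le0$ in $D$, contradicting $\sup_D\psi>0$. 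Hence $\underline\lambda\ge\lambda(\cI,D)$, and combined with the previous step, $\underline\lambda=\lambda(\cI,D)=\bar\lambda$.

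The step I expect to need the most care is the nonlocal input, which here is entirely outsourced: the comparison/maximum principle for $\cI+\lambda$ in $D$ when $\lambda(\cI+\lambda,D)>0$, and the solvability of the linear Dirichlet problem with bounded nonzero exterior datum. Under \cref{A1.1}, and in particular the locally compact support of $\nu$, both are provided by the bounded-domain theory collected in the appendix (\cref{CA1} and \cref{TA3}); once these are in hand the rest of the argument is soft. A secondary technical point is the boundary behaviour of $\psi_0$, namely continuity up to $\partial D$ and attainment of the datum $1$, which I would obtain from the smoothness of $\partial D$ together with the embedding $\Sob^{2,p}(D)\hookrightarrow\cC(\Bar{D})$ for $p>d$. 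If one wishes to avoid \cref{TA3}, an alternative route is to enlarge $D$ to a smooth domain $D'\Supset D$ with $\lambda(\cI,D')>\lambda$ and take $\psi_0=\psi_{D'}+\varepsilon$ for small $\varepsilon>0$, at the cost of invoking the continuity of $D\mapsto\lambda(\cI,D)$ under exterior approximation.
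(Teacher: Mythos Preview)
Your argument has a circularity problem. In this paper, \cref{CA1} is stated explicitly as a corollary of \cref{ETA2B}: the statement ``$\lambda(\cI+\lambda,D)>0$ implies the comparison principle for $\cI+\lambda$ in $D$'' is precisely the content of the inequality $\underline\lambda\ge\lambda(\cI,D)$ that you are trying to prove. So your argument for the hard direction of \cref{ETA2B} is not a proof but a restatement. The same circularity enters your main argument for \cref{ETA2A}, where you invoke \cref{CA1} to obtain $\psi_0\ge0$; and \cref{TA3}, whose uniqueness part rests on \cref{CA1}, is likewise downstream of \cref{TA2}.

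The good news is that your ``alternative route'' for \cref{ETA2A} is exactly what the paper does, and does not require \cref{CA1}: take a slightly larger smooth domain $D_n\Supset D$ with $\lambda(\cI,D_n)$ close to $\lambda(\cI,D)$ (continuity under exterior approximation, \cite[Theorem~2.2]{AB19}), let $\psi_n$ be its Dirichlet eigenfunction from \cref{TA1}, and set $\xi_\delta=\psi_n+\delta\chi$ with a cutoff $\chi$ supported in $D_n\setminus D$ and equal to $1$ on $D_n^c$. Then $\inf_{\Rd}\xi_\delta>0$ for every $\delta>0$, and a direct computation shows $\cI\xi_\delta+(\lambda(\cI,D)-2\varepsilon)\xi_\delta\le0$ in $D$ for $\delta$ small. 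This yields $\bar\lambda\ge\lambda(\cI,D)$ without any appeal to a comparison principle.

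For \cref{ETA2B} you must supply an argument that does not presuppose \cref{CA1}. The paper does this by the classical touching argument: given $\mu<\lambda(\cI,D)$ and a putative subsolution $\psi$ with $\sup_D\psi>0$, $\psi\le0$ on $D^c$, use the just-proved \cref{ETA2A} to pick $\varphi$ with $\inf_{\Rd}\varphi>0$ and $\cI\varphi+\lambda\varphi\le0$ in $D$ for some $\lambda\in(\mu,\lambda(\cI,D))$. Slide: set $\kappa=\inf\{t>0:t\varphi-\psi>0\text{ in }D\}>0$ and $\varphi_\kappa=\kappa\varphi-\psi\ge0$, which touches $0$ in $\Bar D$ and satisfies $\cI\varphi_\kappa+\mu\varphi_\kappa\le\kappa(\mu-\lambda)\varphi<0$ in $D$. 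Dropping the nonlocal term gives a local supersolution for $\sA$, and the strong maximum principle forces $\varphi_\kappa\equiv0$ in $D$, contradicting $\varphi_\kappa>0$ on $\partial D$. This is the step you were outsourcing; it has to be done here, since \cref{CA1} is its consequence, not its input.
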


\begin{proof}
Let $\lambda_1$ denote the rhs of \cref{ETA2A}.
It is then obvious from the definition that $\lambda(\cI,D)\ge\lambda_1$.
Let $D_n$ be a sequence of strictly decreasing domains of $\cC^2$ type that converges
to $D$. Then it follows from \cite[Theorem~2.2]{AB19} that
$\lambda(\cI,D_n)\to \lambda(\cI,D)$ as $n\to\infty$.
For a given $\varepsilon>0$ we fix $n$ large enough to satisfy
$\lambda(\cI,D)\le \lambda(\cI,D_n)+\varepsilon$.
Let $\psi_n$ be the Dirichlet principal eigenfunction corresponding to
$\lambda(\cI,D_n)$ and $\chi$ is a 
smooth cut-off function satisfying $\chi=0$ in $D$ and $\chi=1$ in $D_n^c$.
Let $\xi_\delta(x)=\psi_n(x)+\delta\chi(x)$.
It then follows that $\inf_{\Rd}\xi_\delta>0$ for every $\delta>0$.
In addition, for $\delta$ sufficiently small, we have
\begin{align*}
\cI\xi_\delta &\,=\, \cI\psi_n + \delta \int_{\Rd}\chi(x+z) \,\nu\bigl(x,\D(z)\bigr)\\
&\,=\,-\lambda(\cI,D_n)\xi_\delta + \delta \left[\lambda_n \chi(x)
+ \int_{\Rd}\chi(x+z)\, \nu\bigl(x,\D(z)\bigr)\right]\\
& \,\le\, (-\lambda(\cI,D) + \varepsilon)\xi_\delta + \delta \xi_\delta(x)
\sup_{x\in D}\, \frac{1}{\xi_\delta(x)}
\left[\lambda_n \chi(x) + \int_{\Rd}\chi(x+z)\, \nu\bigl(x,\D(z)\bigr)\right]\\[5pt]
& \,\le\, (-\lambda(\cI,D) + \varepsilon)\xi_\delta(x) + \varepsilon \xi_\delta(x)
\qquad\forall\,x\in D\,.
\end{align*}
Hence $\lambda_1\ge \lambda(\cI,D)-2\varepsilon$.
Since
$\varepsilon$ is arbitrary, we have $\lambda(\cI,D)=\lambda_1$, proving \cref{ETA2A}.

Next, let $\lambda_2$ denote the rhs of \cref{ETA2B}.
Note that the principal eigenfunction in \cref{ETA1A} is
a valid member of the admissible functions in \cref{ETA2B}.
Thus we have $\lambda(\cI,D)\ge \lambda_2$.
To establish the equality we show that for any $\mu<\lambda(\cI,D)$ there exists no
$\psi\in\cC_{b}(\Rd)\cap\Sobl^{2,d}(D)$
with $\sup_{D}\psi>0$ and $\psi\le 0$ in $D^c$ satisfying
\begin{equation}\label{ETA2C}
\cI \psi + \mu\psi \,\ge\, 0 \quad \text{in~} D\,.
\end{equation}
Suppose, on the contrary, that such $\psi$ exists.
Using the characterization in \cref{ETA2A} we can find 
$\varphi$ with $\inf_{\Rd}\varphi>0$ and 
\begin{equation*}
\cI \varphi + \lambda \varphi \,\le\, 0\quad \text{in~} D\,,
\end{equation*}
for some $\lambda\in (\mu, \lambda(\cI,D))$.
Define
\begin{equation*}
\kappa \,=\, \inf\,\{t>0\,\colon t\varphi-\psi>0 \text{~in~} D\}\,,
\quad \text{and}\quad 
\varphi_\kappa=\kappa\varphi-\psi\,.
\end{equation*}
Since $\sup_D\psi>0$, we have $\kappa>0$, and $\varphi_\kappa$ vanishes
at some point in $D$.
 Also, from \cref{ETA2C}, we have 
\begin{equation*}
\cI\varphi_\kappa  + \mu \varphi_\kappa \,\le\, \kappa(\mu-\lambda)\varphi \,\le\,
0\quad \text{in~} D\,.
\end{equation*}
This of course, implies that
\begin{equation*}
\trace\bigl(a(x) \grad^2 \varphi_\kappa\bigr) + b(x)\cdot\grad \varphi_\kappa(x)
- (c(x) -\nu(x)+\mu)^-\varphi_\kappa(x)
 \,\le\, 0 \quad \text{in~} D\,.
 \end{equation*}
Since $\varphi_\kappa$ attains its minimum $0$ inside $D$,
it follows from the strong maximum principle
that $\varphi_\kappa\equiv 0$ in $D$.
But this contradicts that fact that $\varphi_\kappa>0$ on $\partial D$. 
Hence there is no such $\psi$ satisfying \cref{ETA2C}.
Thus we have $\lambda_2\ge \lambda(\cI,D)$, giving us \cref{ETA2B}.
\end{proof}

Note that \cref{ETA2B} gives a refined maximum principle.

\begin{corollary}\label{CA1}
Suppose that $\lambda(\cI,D)>0$.
Then any $\psi\in\cC_{b}(\Rd)\cap\Sobl^{2,d}(D)$ satisfying
\begin{equation*}
\cI \psi\ge 0 \quad \text{in~} D\,, \quad \text{and}\quad \psi \,\le\, 0
\quad \text{in~} D^c\,,
\end{equation*}
is nonpositive in $\Rd$.
\end{corollary}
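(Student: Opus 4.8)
The plan is to deduce the corollary immediately from the second variational characterization \cref{ETA2B} of $\lambda(\cI,D)$ proved in \cref{TA2}, via a short contradiction argument; no new analysis is needed, since all the machinery (strong maximum principle, domain approximation, existence of the principal eigenfunction) has already been absorbed into the proof of \cref{TA2}. Suppose $\psi\in\cC_{b}(\Rd)\cap\Sobl^{2,d}(D)$ satisfies $\cI\psi\ge0$ in $D$ and $\psi\le0$ in $D^c$, and assume toward a contradiction that $\psi$ fails to be nonpositive on $\Rd$. Since $\psi\le0$ on $D^c$ by hypothesis, the sign violation must occur inside $D$, so $\sup_D\psi>0$.

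Next I would check that $\psi$ is an admissible competitor for the infimum in \cref{ETA2B} at the level $\lambda=0$: it lies in $\cC_{b}(\Rd)\cap\Sobl^{2,d}(D)$, it satisfies $\sup_D\psi>0$ and $\psi\le0$ in $D^c$, and $\cI\psi+0\cdot\psi=\cI\psi\ge0$ in $D$. Hence the infimum defining $\lambda(\cI,D)$ in \cref{ETA2B} is at most $0$, that is $\lambda(\cI,D)\le0$, which contradicts the standing assumption $\lambda(\cI,D)>0$. Therefore $\sup_D\psi\le0$, and combining this with $\psi\le0$ on $D^c$ yields $\psi\le0$ on all of $\Rd$, as claimed.

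I do not expect any real obstacle in this argument. The only point requiring a moment's care is that the value $\lambda=0$ is a genuine element of the set over which the infimum in \cref{ETA2B} ranges, which is exactly what the hypotheses $\cI\psi\ge0$ in $D$ and $\psi\le0$ in $D^c$ guarantee; one should also note that using $\lambda=0$ (rather than some $\mu\in(0,\lambda(\cI,D))$) is what makes the inequality $\cI\psi+\lambda\psi\ge0$ in $D$ follow directly from $\cI\psi\ge0$, without needing any sign information on $\psi$ in the interior of $D$.
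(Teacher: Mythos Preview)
Your argument is correct and is exactly the approach the paper intends: the corollary is stated immediately after \cref{TA2} with the remark that \cref{ETA2B} ``gives a refined maximum principle,'' and your contradiction argument via the admissibility of $\psi$ at level $\lambda=0$ in \cref{ETA2B} is precisely how one unpacks that remark.
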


\cref{CA1} gives us the next existence result.

\begin{theorem}\label{TA3}
Suppose that $\lambda(\cI,D)>0$ and $f\in \cC(\Bar{D})$.
Then there exists a unique $u\in \cC_{b}(\Rd)\cap\Sobl^{2,d}(D)$ satisfying
\begin{equation*}
\begin{split}
\cI u &\,=\, f \quad \text{in~} D\,,\\
u&\,\,=\,0 \quad \text{in~} D^c\,.
\end{split}
\end{equation*}
Furthermore, if $f\lneq 0$, then $u>0$ in $D$.
\end{theorem}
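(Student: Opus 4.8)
The plan is to split $\cI$ as $\cI f=\sA f+\sJ f$, where $\sJ u(x)\df\int_{\Rd}u(x+z)\,\nu(x,\D z)$, to invert a shifted copy of the local operator $\sA$, and then to close the argument with the Fredholm alternative; the hypothesis $\lambda(\cI,D)>0$ enters only at the very end, through \cref{CA1}.

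\emph{Uniqueness} is immediate: if $u_1,u_2$ both solve the problem then $w\df u_1-u_2$ satisfies $\cI w=0$ in $D$ and $w=0$ in $D^c$, so \cref{CA1} applied to $w$ and to $-w$ forces $w\equiv0$. Granting existence, the positivity statement is also short: if $f\lneq0$ then $\cI(-u)=-f\ge0$ in $D$ and $-u\le0$ in $D^c$, so \cref{CA1} gives $u\ge0$ in $\Rd$, and $u$ is not identically zero (otherwise $f=\cI u=0$). Since $\sA u=f-\sJ u$ with $\sJ u\ge0$ (as $u\ge0$) and $f\le0$, we get $\sA u\le0$ in $D$, and since $(c-\nu)^+u\ge0$ it follows that $u$ is a nonnegative, not identically zero supersolution of $u\mapsto\trace(a\grad^2u)+b\cdot\grad u-(c-\nu)^-u$, whose zeroth order coefficient is nonpositive; hence $u>0$ in $D$ by the strong maximum principle.

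The substance is \emph{existence}. First note that $\sJ$ is a bounded linear map from $\cC(\Bar{D})$ — with functions extended by $0$ off $D$ — into $L^\infty(D)$, since $\abs{\sJ u(x)}\le\norm{u}_{L^\infty}\,\nu(x)$ and $\nu$ is bounded on $\Bar{D}$. Fix $\theta>0$ with $c-\nu-\theta<0$ on $\Bar{D}$; then, by the classical $L^p$ Dirichlet theory for local elliptic operators with strictly negative zeroth order coefficient \cite[Ch.~9]{GilTru}, for every $g\in L^p(D)$ with $p>d$ there is a unique $G_\theta g\in\Sob^{2,p}(D)$, vanishing on $D^c$, solving $(\sA-\theta)(G_\theta g)=g$ in $D$. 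As $p>d$, the embedding $\Sob^{2,p}(D)\hookrightarrow\cC(\Bar{D})$ is compact, so $G_\theta$ acts compactly on $\cC(\Bar{D})$, and therefore so does $T\df\theta G_\theta+G_\theta\sJ$. Now any $u\in\cC(\Bar{D})$ with $(\mathrm{Id}+T)u=G_\theta f$ satisfies $u=G_\theta(f-\theta u-\sJ u)$, hence $u\in\Sob^{2,p}(D)\subset\Sobl^{2,d}(D)$, $u\in\cC_b(\Rd)$, and $(\sA-\theta)u=f-\theta u-\sJ u$, that is, $\cI u=\sA u+\sJ u=f$ in $D$ with $u=0$ in $D^c$; so it is exactly the solution sought. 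By the Fredholm alternative $\mathrm{Id}+T$ is invertible once it is injective, and if $(\mathrm{Id}+T)w=0$ the same computation gives $w\in\Sob^{2,p}(D)$ with $\cI w=0$ in $D$ and $w=0$ in $D^c$, so \cref{CA1} applied to $w$ and $-w$ yields $w\equiv0$. Hence $u\df(\mathrm{Id}+T)^{-1}G_\theta f$ exists and solves the problem.

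The hard part will be this existence step, mostly the functional-analytic bookkeeping: choosing $p>d$ so that the Sobolev embedding — and with it $G_\theta\sJ$ on $\cC(\Bar{D})$ — is compact, shifting by $\theta$ so that solvability of $(\sA-\theta)v=g$ is unconditional, and recognizing the homogeneous equation $(\mathrm{Id}+T)w=0$ as precisely the Dirichlet problem $\cI w=0$, $w|_{D^c}=0$, to which \cref{CA1} applies because $\lambda(\cI,D)>0$. Everything else is standard elliptic regularity together with the maximum principle.
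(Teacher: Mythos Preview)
Your proof is correct, but the existence argument takes a genuinely different route from the paper's. The paper disposes of existence in one line by invoking a standard monotone iteration method (citing \cite[Proposition~4.5]{QS08}): one picks a large $\theta$, builds an increasing sequence by solving $(\cI-\theta)\xi_{i+1}=f-\theta\xi_i$ between a sub- and supersolution, and passes to the limit. You instead recast the problem as a Fredholm equation $(\mathrm{Id}+T)u=G_\theta f$ on $\cC(\Bar D)$, with $T=\theta G_\theta+G_\theta\sJ$ compact thanks to the embedding $\Sob^{2,p}(D)\hookrightarrow\cC(\Bar D)$ for $p>d$, and observe that injectivity of $\mathrm{Id}+T$ is exactly \cref{CA1}. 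Both approaches feed the hypothesis $\lambda(\cI,D)>0$ through the same maximum principle, and both lean on the same local solvability of $\sA-\theta$; your Fredholm argument is cleaner in that it yields existence for arbitrary $f\in\cC(\Bar D)$ in one shot without constructing ordered barriers, while the monotone scheme is more constructive and aligns with the iteration already used elsewhere in the paper (e.g., in the proof of \cref{L2.2}). Your uniqueness and positivity arguments match the paper's essentially verbatim.
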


\begin{proof}
Uniqueness follows from \cref{CA1}.
Existence follows from a standard monotone iteration method
(cf.\ \cite[Proposition~4.5]{QS08}).
The last conclusion follows from the strong maximum principle.
\end{proof}

\subsection*{Acknowledgement}
The authors are indebted to the referees for their careful reading and suggestions.
The research of Ari Arapostathis was supported
in part by the National Science Foundation through grant DMS-1715210, and
in part by the Army Research Office through grant W911NF-17-1-001.
The research of Anup Biswas was supported in part by DST-SERB grants EMR/2016/004810, MTR/2018/000028 and a SwarnaJayanti fellowship
SB/SJF/2020-21/03. Prasun Roychowdhury was supported in part by Council of Scientific \& Industrial Research (File no. 09/936(0182)/2017-EMR-I).


\begin{bibdiv}
\begin{biblist}

\bib{Agmon-83}{collection}{
      author={Agmon, Shmuel},
       title={On positivity and decay of solutions of second order elliptic
  equations on {R}iemannian manifolds},
      series={Methods of functional analysis and theory of elliptic equations
  ({N}aples, 1982)},
   publisher={Liguori, Naples},
        date={1983},
      review={\MR{819005}},
}

\bib{ABBK-19}{article}{
      author={Arapostathis, A.},
      author={Biswas, A.},
      author={Borkar, V.~S.},
      author={Kumar, K.~Suresh},
       title={A variational characterization of the risk-sensitive average
  reward for controlled diffusions in $\mathbb{R}^d$},
        date={2020},
     journal={SIAM J. Control Optim.},
      volume={58},
      number={6},
      pages={3785\ndash 3813},
}

\bib{AB19}{article}{
      author={Arapostathis, Ari},
      author={Biswas, Anup},
       title={Risk-sensitive control for a class of diffusions with jumps},
       journal={Annals of Applied Probability},
       Volume={to appear},
        date={2019},
}

\bib{ABG-19}{article}{
      author={Arapostathis, Ari},
      author={Biswas, Anup},
      author={Ganguly, Debdip},
       title={Certain {L}iouville properties of eigenfunctions of elliptic
  operators},
        date={2019},
        ISSN={0002-9947},
     journal={Trans. Amer. Math. Soc.},
      volume={371},
      number={6},
       pages={4377\ndash 4409},
         url={https://doi.org/10.1090/tran/7694},
      review={\MR{3917226}},
}

\bib{ABS-19}{article}{
      author={Arapostathis, Ari},
      author={Biswas, Anup},
      author={Saha, Subhamay},
       title={Strict monotonicity of principal eigenvalues of elliptic
  operators in {$\mathbb{R}^d$} and risk-sensitive control},
        date={2019},
        ISSN={0021-7824},
     journal={J. Math. Pures Appl. (9)},
      volume={124},
       pages={169\ndash 219},
         url={https://doi.org/10.1016/j.matpur.2018.05.008},
      review={\MR{3926044}},
}

\bib{ACGZ}{article}{
      author={Arapostathis, Ari},
      author={Caffarelli, Luis},
      author={Pang, Guodong},
      author={Zheng, Yi},
       title={Ergodic control of a class of jump diffusions with finite
  {L}\'{e}vy measures and rough kernels},
        date={2019},
        ISSN={0363-0129},
     journal={SIAM J. Control Optim.},
      volume={57},
      number={2},
       pages={1516\ndash 1540},
         url={https://doi.org/10.1137/18M1166717},
      review={\MR{3942851}},
}

\bib{AA-Harnack}{article}{
      author={Arapostathis, Ari},
      author={Ghosh, Mrinal~K.},
      author={Marcus, Steven~I.},
       title={Harnack's inequality for cooperative weakly coupled elliptic
  systems},
        date={1999},
     journal={Comm. Partial Differential Equations},
      volume={24},
      number={9-10},
       pages={1555\ndash 1571},
      review={\MR{1708101}},
}

\bib{Arm-09}{article}{
      author={Armstrong, Scott~N.},
       title={Principal eigenvalues and an anti-maximum principle for
  homogeneous fully nonlinear elliptic equations},
        date={2009},
        ISSN={0022-0396},
     journal={J. Differential Equations},
      volume={246},
      number={7},
       pages={2958\ndash 2987},
      review={\MR{2503031}},
}

\bib{BL02}{article}{
    AUTHOR = {Bass, Richard F.},
    AUTHOR={Levin, David A.},
     TITLE = {Harnack inequalities for jump processes},
   JOURNAL = {Potential Anal.},
  FJOURNAL = {Potential Analysis. An International Journal Devoted to the
              Interactions between Potential Theory, Probability Theory,
              Geometry and Functional Analysis},
    VOLUME = {17},
      YEAR = {2002},
    NUMBER = {4},
     PAGES = {375--388},
      ISSN = {0926-2601},
   MRCLASS = {60J75 (60J35)},
  MRNUMBER = {1918242},
MRREVIEWER = {E. S. Lee},
       DOI = {10.1023/A:1016378210944},
       URL = {https://doi.org/10.1023/A:1016378210944},
}

\bib{BNV-94}{article}{
      author={Berestycki, H.},
      author={Nirenberg, L.},
      author={Varadhan, S. R.~S.},
       title={The principal eigenvalue and maximum principle for second-order
  elliptic operators in general domains},
        date={1994},
        ISSN={0010-3640},
     journal={Comm. Pure Appl. Math.},
      volume={47},
      number={1},
       pages={47\ndash 92},
      review={\MR{1258192}},
}

\bib{BCV-16}{article}{
      author={Berestycki, Henri},
      author={Coville, J\'{e}r\^{o}me},
      author={Vo, Hoang-Hung},
       title={On the definition and the properties of the principal eigenvalue
  of some nonlocal operators},
        date={2016},
        ISSN={0022-1236},
     journal={J. Funct. Anal.},
      volume={271},
      number={10},
       pages={2701\ndash 2751},
         url={https://doi.org/10.1016/j.jfa.2016.05.017},
      review={\MR{3548277}},
}

\bib{BHR-07}{article}{
      author={Berestycki, Henri},
      author={Hamel, Fran\c{c}ois},
      author={Rossi, Luca},
       title={Liouville-type results for semilinear elliptic equations in
  unbounded domains},
        date={2007},
        ISSN={0373-3114},
     journal={Ann. Mat. Pura Appl. (4)},
      volume={186},
      number={3},
       pages={469\ndash 507},
      review={\MR{2317650}},
}

\bib{BRR-11}{article}{
      author={Berestycki, Henri},
      author={Roquejoffre, Jean-Michel},
      author={Rossi, Luca},
       title={The periodic patch model for population dynamics with fractional
  diffusion},
        date={2011},
        ISSN={1937-1632},
     journal={Discrete Contin. Dyn. Syst. Ser. S},
      volume={4},
      number={1},
       pages={1\ndash 13},
      review={\MR{2746392}},
}

\bib{Berestycki-15}{article}{
      author={Berestycki, Henri},
      author={Rossi, Luca},
       title={Generalizations and properties of the principal eigenvalue of
  elliptic operators in unbounded domains},
        date={2015},
     journal={Comm. Pure Appl. Math.},
      volume={68},
      number={6},
       pages={1014\ndash 1065},
         url={https://doi-org.ezproxy.lib.utexas.edu/10.1002/cpa.21536},
      review={\MR{3340379}},
}

\bib{BR-20}{article}{
      author={Biswas, A.},
      author={Roychowdhury, Prasun},
       title={Generalized principal eigenvalues of convex nonlinear elliptic
  operators in {$\mathbb{R}^N$}},
        date={2020},
     journal={Adv. Calc. Var.},
      volume={online first},
      eprint={https://doi.org/10.1515/acv-2020-0035},
}

\bib{Biswas-20}{article}{
      author={Biswas, Anup},
       title={Principal eigenvalues of a class of nonlinear
  integro-differential operators},
        date={2020},
        ISSN={0022-0396},
     journal={J. Differential Equations},
      volume={268},
      number={9},
       pages={5257\ndash 5282},
         url={https://doi.org/10.1016/j.jde.2019.11.011},
      review={\MR{4066049}},
}

\bib{BL-19}{article}{
      author={Biswas, Anup},
      author={L\H{o}rinczi, J\'ozsef},
       title={Hopf's lemma for viscosity solutions to a class of non-local
  equations with applications},
        date={2021},
     journal={Nonlinear Analysis},
       pages={112194},
         url={https://doi.org/10.1016/j.na.2020.112194},
}

\bib{CS09}{article}{
    AUTHOR = {Caffarelli, Luis},
    AUTHOR={Silvestre, Luis},
     TITLE = {Regularity theory for fully nonlinear integro-differential
              equations},
   JOURNAL = {Comm. Pure Appl. Math.},
  FJOURNAL = {Communications on Pure and Applied Mathematics},
    VOLUME = {62},
      YEAR = {2009},
    NUMBER = {5},
     PAGES = {597--638},
      ISSN = {0010-3640},
   MRCLASS = {35R09 (34B30 35B65 35J60 41A17 45J05 60J75 93E20)},
  MRNUMBER = {2494809},
MRREVIEWER = {Fabiana Leoni},
       DOI = {10.1002/cpa.20274},
       URL = {https://doi.org/10.1002/cpa.20274},
}

\bib{CH-20}{article}{
      author={Coville, J\'{e}r\^{o}me},
      author={Hamel, Fran\c{c}ois},
       title={On generalized principal eigenvalues of nonlocal operators with a
  drift},
        date={2020},
        ISSN={0362-546X},
     journal={Nonlinear Anal.},
      volume={193},
       pages={111569, 20},
         url={https://doi.org/10.1016/j.na.2019.07.002},
      review={\MR{4062984}},
}

\bib{Cov10}{article}{
    AUTHOR = {Coville, J\'{e}r\^{o}me},
     TITLE = {On a simple criterion for the existence of a principal
              eigenfunction of some nonlocal operators},
   JOURNAL = {J. Differential Equations},
  FJOURNAL = {Journal of Differential Equations},
    VOLUME = {249},
      YEAR = {2010},
    NUMBER = {11},
     PAGES = {2921--2953},
      ISSN = {0022-0396},
   MRCLASS = {47G20 (35B50 35R11 45C05)},
  MRNUMBER = {2718672},
MRREVIEWER = {Svetlin Georgiev Georgiev},
       DOI = {10.1016/j.jde.2010.07.003},
       URL = {https://doi.org/10.1016/j.jde.2010.07.003},
}

\bib{Cov12}{article}{
    AUTHOR = {Coville, J\'{e}r\^{o}me},
     TITLE = {Harnack type inequality for positive solution of some integral
              equation},
   JOURNAL = {Ann. Mat. Pura Appl. (4)},
  FJOURNAL = {Annali di Matematica Pura ed Applicata. Series IV},
    VOLUME = {191},
      YEAR = {2012},
    NUMBER = {3},
     PAGES = {503--528},
      ISSN = {0373-3114},
   MRCLASS = {45A05 (45C05 45M20)},
  MRNUMBER = {2958346},
MRREVIEWER = {Andreas Rathsfeld},
       DOI = {10.1007/s10231-011-0193-2},
       URL = {https://doi.org/10.1007/s10231-011-0193-2},
}

\bib{DKP14}{article}{
    AUTHOR = {Di Castro, Agnese},
    Author={Kuusi, Tuomo},
    AUTHOR={Palatucci, Giampiero},
     TITLE = {Nonlocal {H}arnack inequalities},
   JOURNAL = {J. Funct. Anal.},
  FJOURNAL = {Journal of Functional Analysis},
    VOLUME = {267},
      YEAR = {2014},
    NUMBER = {6},
     PAGES = {1807--1836},
      ISSN = {0022-1236},
   MRCLASS = {35R11 (35B45 35B65 35D30 35J25 35R09)},
  MRNUMBER = {3237774},
       DOI = {10.1016/j.jfa.2014.05.023},
       URL = {https://doi.org/10.1016/j.jfa.2014.05.023},
}

\bib{DPV21}{article}{
	AUTHOR={Dipierro, Serena},
	AUTHOR={Proietti Lippi, Edoardo},
	AUTHOR={Valdinoci, Enrico},
	TITLE={(Non)local logistic equations with Neumann conditions},
	journal={ArXiv e-print},
	YEAR={2021},
}

\bib{DV76}{article}{
    AUTHOR = {Donsker, M. D.},
    Author= {Varadhan, S. R. S.},
     TITLE = {On the principal eigenvalue of second-order elliptic
              differential operators},
   JOURNAL = {Comm. Pure Appl. Math.},
  FJOURNAL = {Communications on Pure and Applied Mathematics},
    VOLUME = {29},
      YEAR = {1976},
    NUMBER = {6},
     PAGES = {595--621},
      ISSN = {0010-3640},
   MRCLASS = {35P15},
  MRNUMBER = {425380},
MRREVIEWER = {H. F. Weinberger},
       DOI = {10.1002/cpa.3160290606},
       URL = {https://doi.org/10.1002/cpa.3160290606},
}

\bib{F09}{article}{
    AUTHOR = {Foondun, Mohammud},
     TITLE = {Harmonic functions for a class of integro-differential
              operators},
   JOURNAL = {Potential Anal.},
  FJOURNAL = {Potential Analysis. An International Journal Devoted to the
              Interactions between Potential Theory, Probability Theory,
              Geometry and Functional Analysis},
    VOLUME = {31},
      YEAR = {2009},
    NUMBER = {1},
     PAGES = {21--44},
      ISSN = {0926-2601},
   MRCLASS = {60J75 (35B45 35J15 47G20)},
  MRNUMBER = {2507444},
MRREVIEWER = {Qing Yang Guan},
       DOI = {10.1007/s11118-009-9121-0},
       URL = {https://doi.org/10.1007/s11118-009-9121-0},
}

\bib{GilTru}{book}{
      author={Gilbarg, David},
      author={Trudinger, Neil~S.},
       title={Elliptic partial differential equations of second order},
     edition={Second},
      series={Grundlehren der Mathematischen Wissenschaften},
   publisher={Springer-Verlag, Berlin},
        date={1983},
      volume={224},
      review={\MR{737190}},
}

\bib{IY-06}{article}{
      author={Ishii, H.},
      author={Yoshimura, Y.},
       title={A note on demi-eigenvalues for uniformly elliptic isaacs
  operators},
        date={2006},
     journal={Viscosity Solution Theory of Differential Equations and its
  Developments},
       pages={106\ndash 114},
}

\bib{Ju-07}{article}{
      author={Juutinen, Petri},
       title={Principal eigenvalue of a very badly degenerate operator and
  applications},
        date={2007},
        ISSN={0022-0396},
     journal={J. Differential Equations},
      volume={236},
      number={2},
       pages={532\ndash 550},
         url={https://doi.org/10.1016/j.jde.2007.01.020},
      review={\MR{2322023}},
}

\bib{M19}{article}{
      author={Mou, Chenchen},
       title={Existence of {$C^\alpha$} solutions to integro-{PDE}s},
        date={2019},
        ISSN={0944-2669},
     journal={Calc. Var. Partial Differential Equations},
      volume={58},
      number={4},
       pages={Paper No. 143, 28},
         url={https://doi.org/10.1007/s00526-019-1597-x},
      review={\MR{3989947}},
}

\bib{NV-19}{article}{
      author={Nguyen, Phuoc-Tai},
      author={Vo, Hoang-Hung},
       title={On the generalized principal eigenvalue of quasilinear operator:
  definitions and qualitative properties},
        date={2019},
        ISSN={0944-2669},
     journal={Calc. Var. Partial Differential Equations},
      volume={58},
      number={3},
       pages={Paper No. 102, 22},
         url={https://doi.org/10.1007/s00526-019-1523-2},
      review={\MR{3954385}},
}

\bib{N84}{article}{
      author={Nussbaum, Roger~D.},
       title={Positive operators and elliptic eigenvalue problems},
        date={1984},
        ISSN={0025-5874},
     journal={Math. Z.},
      volume={186},
      number={2},
       pages={247\ndash 264},
         url={https://doi.org/10.1007/BF01161807},
      review={\MR{741305}},
}

\bib{NP92}{article}{
      author={Nussbaum, Roger~D.},
      author={Pinchover, Yehuda},
       title={On variational principles for the generalized principal
  eigenvalue of second order elliptic operators and some applications},
        date={1992},
        ISSN={0021-7670},
     journal={J. Anal. Math.},
      volume={59},
       pages={161\ndash 177},
         url={https://doi.org/10.1007/BF02790223},
        note={Festschrift on the occasion of the 70th birthday of Shmuel
  Agmon},
      review={\MR{1226957}},
}

\bib{Pinsky12}{article}{
    AUTHOR = {Pinsky, Ross G.},
     TITLE = {Asymptotics for exit problem and principal eigenvalue for a
              class of non-local elliptic operators related to diffusion
              processes with random jumps and vanishing diffusion},
   JOURNAL = {Bull. Inst. Math. Acad. Sin. (N.S.)},
  FJOURNAL = {Bulletin of the Institute of Mathematics. Academia Sinica. New
              Series},
    VOLUME = {7},
      YEAR = {2012},
    NUMBER = {4},
     PAGES = {545--564},
      ISSN = {2304-7909},
   MRCLASS = {60J60 (35J15 35J25 35P15 60J75)},
  MRNUMBER = {3077469},
MRREVIEWER = {Stavros Vakeroudis},
}

\bib{Pinsky09}{article}{
    AUTHOR = {Pinsky, Ross G.},
     TITLE = {Spectral analysis of a class of nonlocal elliptic operators
              related to {B}rownian motion with random jumps},
   JOURNAL = {Trans. Amer. Math. Soc.},
  FJOURNAL = {Transactions of the American Mathematical Society},
    VOLUME = {361},
      YEAR = {2009},
    NUMBER = {9},
     PAGES = {5041--5060},
      ISSN = {0002-9947},
   MRCLASS = {60J65 (35P15 47F05 47N30 60J75)},
  MRNUMBER = {2506436},
       DOI = {10.1090/S0002-9947-09-04880-6},
       URL = {https://doi.org/10.1090/S0002-9947-09-04880-6},
}

\bib{PW66}{article}{
      author={Protter, M.~H.},
      author={Weinberger, H.~F.},
       title={On the spectrum of general second order operators},
        date={1966},
        ISSN={0002-9904},
     journal={Bull. Amer. Math. Soc.},
      volume={72},
       pages={251\ndash 255},
         url={https://doi.org/10.1090/S0002-9904-1966-11485-4},
      review={\MR{190527}},
}

\bib{QSX20}{article}{
      author={Quaas, Alexander},
      author={Salort, Ariel},
      author={Xia, Aliang},
       title={Principal eigenvalues of fully nonlinear integro-differential
  elliptic equations with a drift term},
        date={2020},
        ISSN={1292-8119},
     journal={ESAIM Control Optim. Calc. Var.},
      volume={26},
       pages={Paper No. 36, 19},
         url={https://doi.org/10.1051/cocv/2020003},
      review={\MR{4116681}},
}

\bib{QS08}{article}{
      author={Quaas, Alexander},
      author={Sirakov, Boyan},
       title={Principal eigenvalues and the {D}irichlet problem for fully
  nonlinear elliptic operators},
        date={2008},
     journal={Adv. Math.},
      volume={218},
      number={1},
       pages={105\ndash 135},
      review={\MR{2409410}},
}

\bib{RS12}{article}{
    AUTHOR = {Rawal, Nar },
    Auhtor={Shen, Wenxian},
     TITLE = {Criteria for the existence and lower bounds of principal
              eigenvalues of time periodic nonlocal dispersal operators and
              applications},
   JOURNAL = {J. Dynam. Differential Equations},
  FJOURNAL = {Journal of Dynamics and Differential Equations},
    VOLUME = {24},
      YEAR = {2012},
    NUMBER = {4},
     PAGES = {927--954},
      ISSN = {1040-7294},
   MRCLASS = {35K57 (35B10 35P15 45C05 92D25)},
  MRNUMBER = {3000610},
MRREVIEWER = {Rui Peng},
       DOI = {10.1007/s10884-012-9276-z},
       URL = {https://doi.org/10.1007/s10884-012-9276-z},
}

\bib{R05}{article}{
      author={Riahi, Lotfi},
       title={Comparison of {G}reen functions and harmonic measures for
  parabolic operators},
        date={2005},
        ISSN={0926-2601},
     journal={Potential Anal.},
      volume={23},
      number={4},
       pages={381\ndash 402},
         url={https://doi.org/10.1007/s11118-005-2606-6},
      review={\MR{2139572}},
}

\bib{SX15}{article}{
    AUTHOR = {Shen, Wenxian},
    Author= {Xie, Xiaoxia},
     TITLE = {On principal spectrum points/principal eigenvalues of nonlocal
              dispersal operators and applications},
   JOURNAL = {Discrete Contin. Dyn. Syst.},
  FJOURNAL = {Discrete and Continuous Dynamical Systems. Series A},
    VOLUME = {35},
      YEAR = {2015},
    NUMBER = {4},
     PAGES = {1665--1696},
      ISSN = {1078-0947},
   MRCLASS = {45C05 (45K05 92D25)},
  MRNUMBER = {3285842},
MRREVIEWER = {Mohamed Abdalla Darwish},
       DOI = {10.3934/dcds.2015.35.1665},
       URL = {https://doi.org/10.3934/dcds.2015.35.1665},
}

\bib{Z96}{article}{
      author={Zhang, Qi},
       title={A {H}arnack inequality for the equation {$\nabla(a\nabla
  u)+b\nabla u=0$}, when {$|b|\in K_{n+1}$}},
        date={1996},
        ISSN={0025-2611},
     journal={Manuscripta Math.},
      volume={89},
      number={1},
       pages={61\ndash 77},
         url={https://doi.org/10.1007/BF02567505},
      review={\MR{1368536}},
}

\end{biblist}
\end{bibdiv}

\end{document}